\renewcommand{\leq}{\leqslant}
\renewcommand{\geq}{\geqslant}
\DeclareMathOperator{\Fix}{Fix}
\DeclareMathOperator{\id}{id}
\DeclareMathOperator{\Ret}{Ret}
\DeclareMathOperator{\Soc}{Soc}
\DeclareMathOperator{\Sym}{Sym}
\DeclareMathOperator{\Sq}{Sq}
\DeclareMathOperator{\ord}{ord}
\newcommand{\N}{\mathbb{N}}
\newcommand{\Z}{\mathbb{Z}}
\newcommand{\Aut}{\operatorname{Aut}}
\newcommand{\End}{\mathrm{End}}
\newcommand{\G}{\mathcal{G}}
\newcommand{\Gmult}{\mathcal{G}^{\circ}}
\newcommand{\Gadd}{\mathcal{G}^+}
\newcommand{\Gbr}{\mathcal{G}}
\newcommand{\Pri}[1]{#1^{\prime}}
\numberwithin{equation}{section}
\numberwithin{figure}{section}
\numberwithin{table}{section}
\newtheorem{thm}{Theorem}[section]
\newtheorem*{thm*}{Theorem}
\newtheorem{lem}[thm]{Lemma}
\newtheorem{cor}[thm]{Corollary}
\newtheorem{pro}[thm]{Proposition}
\theoremstyle{definition}
\newtheorem*{convention*}{Convention}
\newtheorem{rem}[thm]{Remark}
\title[Indecomposable involutive solutions of size $p^2$]{Indecomposable involutive set-theoretical solutions to the Yang--Baxter equation of size $p^2$}
\author{Carsten Dietzel, Silvia Properzi, Senne Trappeniers}
\address[Carsten Dietzel]{Department of Mathematics and Data Science, Vrije Universiteit Brussel, Pleinlaan 2, 1050 Brussel, Belgium}
\email{Carsten.Dietzel@vub.be}
\address[Silvia Properzi]{Department of Mathematics and Data Science, Vrije Universiteit Brussel, Pleinlaan 2, 1050 Brussel, Belgium}
\email{Silvia.Properzi@vub.be}
\address[Senne Trappeniers]{Department of Mathematics and Data Science, Vrije Universiteit Brussel, Pleinlaan 2, 1050 Brussel, Belgium}
\email{Senne.Trappeniers@vub.be}
\begin{document}
\begin{abstract}
    This article focuses on indecomposable involutive non-degenerate set-theoretical solutions to the Yang--Baxter equation. More specifically, we give a full classification of those solutions which are of size $p^2$, for $p$ a prime. We do this through a thorough analysis of their associated permutation braces and using the language of cycle sets.
\end{abstract}
\maketitle

\noindent \textbf{Keywords:} \textit{brace, cycle set, Yang--Baxter equation}

\section*{Introduction}

The \emph{quantum Yang--Baxter equation}, named after the physicists Chen-Ning Yang and Rodney James Baxter, first appeared in the context of statistical mechanics and integrable systems \cite{Baxter_YB,Yang_YB}. Given a (typically complex) vector space $V$, call a linear operator $R: V^{\otimes 2} \to V^{\otimes 2}$ a \emph{solution} to the \emph{Yang--Baxter equation} if $R$ is bijective and satisfies the following equation within $\End(V^{\otimes 3})$:
\begin{equation}
    (R \otimes \id_{V}) (\id_{V} \otimes R) (R \otimes \id_{V}) =   (\id_{V} \otimes R) (R \otimes \id_{V}) (\id_{V} \otimes R).  \tag{YBE}
\end{equation}
Outside of physics, solutions to this equation have been recognized to be of importance in low-dimensional topology, as results of Turaev \cite{Turaev_links} show that each Yang--Baxter operator gives rise to an associated knot invariant. While these operators are therefore of high importance in knot theory, they are very difficult to construct; initially, the only source of solutions was the theory of quantum groups. However, Drinfeld \cite{Drinfeld_Problems} observed that a particular class of solutions provides a handier way to obtain classes of solutions by demanding that $R$ permutes the tensors $v_i \otimes v_j$ where $(v_i)_{i\in I}$ is a basis of $V$. By doing so, one gets a discretized version of the Yang--Baxter equation, the \emph{set-theoretical Yang--Baxter equation}, which consists of a set $X$ and a bijection $r: X^2 \to X^2$ that satisfies the following equation in $\Sym_{X^3}$, the symmetric group on the set $X^3$:

\begin{equation}
  (r \times \id_X) (\id_X \times r)(r \times \id_X) = (\id_X \times r)(r \times \id_X) (\id_X \times r). \tag{SYBE}
\end{equation}

Besides the simpler, combinatorial flavour of this equation, Drinfeld's intention was to submit these solutions to a deformation process from which novel linear solutions can be obtained.

Indeed, set-theoretical solutions admit an analysis via algebraic methods, the first steps being made by Etingof, Schedler and Soloviev \cite{ESS_YangBaxter} and Gateva-Ivanova and Van den Bergh \cite{GIVdB_IType}.

A \emph{homomorphism} of solutions $f:(X,r)\to (Y,s)$ is a map $f: X \to Y$ such that $(f \times f)r = s(f \times f)$. An \emph{isomorphism} of solutions is a bijective homomorphism.

Writing a set-theoretical solution $(X,r)$ as $r(x,y) = (\lambda_x(y), \rho_y(x))$ we call it
\begin{enumerate}
    \item \emph{non-degenerate}, if the mappings $\lambda_x, \rho_x$ are bijective for all $x \in X$,
    \item \emph{involutive}, if $r^2 = \id_{X^2}$.
\end{enumerate}
By the size of a set-theoretical solution, we mean the size of $X$.
\begin{convention*}
    From now on, we simply write \emph{set-theoretical solutions} or \emph{solutions} when talking about set-theoretical solutions to the Yang--Baxter equation that are finite, involutive and non-degenerate.
\end{convention*}

Rump has shown in \cite{Rump_Decomposition} that the binary operation $x \cdot y = \lambda_x^{-1}(y)$ turns a solution $X$ into a particularly easy algebraic structure; a \emph{(non-degenerate) cycle set}, which is defined as a set $X$ with a binary operation $(x,y) \mapsto x \cdot y$ such that the following axioms are satisfied:

\begin{align}
    (x \cdot y) \cdot (x \cdot z) & = (y \cdot x) \cdot (y \cdot z) \quad \forall x,y,z \in X, \tag{C1} \label{eq:c1_cycloid_equation} \\
    \textnormal{the map} \quad y \mapsto \sigma_x(y) & = x \cdot y \ \textnormal{ is bijective for all } x \in X, \tag{C2} \label{eq:c2} \\
    \textnormal{the \emph{square map}} \quad x \mapsto \Sq(x) & = x \cdot x \ \textnormal{ is bijective.} \label{eq:c3} \tag{C3}
\end{align}

\begin{convention*}
    The cycle sets considered in this work will always be assumed to be of finite size and will be referred to as \emph{cycle sets}, without the predicate \emph{non-degenerate}.
\end{convention*}

Cycle sets are in bijective correspondence with set-theoretical solutions to the Yang--Baxter equation.
More precisely, given a cycle set $(X,\cdot)$ then $(X,r_X)$
is a set-theoretical solution to the YBE,
where $r_X:X^2\to X^2$ is given by 
$$r_X(x,y)=(\sigma_x^{-1}(y),\sigma_x^{-1}(y)\cdot x).$$

\emph{Homomorphisms} of cycle sets are defined in the obvious way. Consequently, this holds for all derived notions, such as \emph{isomorphisms}. Note that isomorphisms of cycle sets translate to isomorphisms of set-theoretical solutions under the correspondence described above. As usual, if there is no danger of confusion, a cycle set $(X,\cdot)$ is simply referred to by its underlying set $X$.

Of particular importance for understanding general cycle sets is the class of \emph{indecomposable} cycle sets. A cycle set $X$ is called \emph{indecomposable} if there are no subsets $X_1,X_2 \neq \emptyset$ closed under the cycle set operation such that $X=X_1\cup X_2$. In a sense, they are elementary building blocks for cycle sets and turn out to be significantly more well-behaved. This can be seen from computations of Akg\"un, Mereb and Vendramin \cite{AMV_Cyclesets} that imply that the number of cycle sets of given cardinality grows in an aggressive manner, while the number of indecomposable cycle sets stays quite small. This indicates that there are strong obstructions for a cycle set to be indecomposable. Note that a cycle set $(X,\cdot)$ is indecomposable if and only if the corresponding set-theoretical solution $(X,r)$ is indecomposable in the sense that there is no partition $X = A \sqcup B$ ($A,B \neq \emptyset$) such that $r(A^2) = A^2$ and $r(B^2) = B^2$.

Indeed, up to isomorphism, there is only one indecomposable cycle set of size $p$, $p$ a prime number, that is given by $X = \Z_p$ with the operation $x \cdot y = y+1$ \cite[Theorem 2.13]{ESS_YangBaxter}. Furthermore, Jedli\v{c}ka and Pilitowska \cite{jedlicka_pilitowska} have recently classified all indecomposable cycle sets of multipermutation level 2 by group-theoretic means. In particular, this includes all indecomposable cycle sets of order $pq$ that are of \emph{finite multipermutation level}, where $p,q$ are (not necessarily distinct) primes. Ced\'o and Okni\'nski \cite[Section 5]{CO_simple} have described a class of simple cycle sets of size $p^2$, $p$ a prime. In \cite{CO_SquarefreeIndecomposable}, the same authors have recently proven that all indecomposable cycle sets of squarefree size are of finite multipermutation level. 

In this article, we build on these results by providing in \cref{thm:final_classification_cycle_sets} a full classification of indecomposable cycle sets of size $p^2$, $p$ a prime. In \cref{cor:final_classification_solutions} we reformulate this classification in terms of set-theoretical solutions.

Note that the classification of indecomposable retractable solutions of size $p^2$ follows from the theoretical classification of indecomposable solutions of multipermutation level $2$ by Jedli\v{c}ka and Pilitowska in \cite{jedlicka_pilitowska}. In \cref{sec: finite mpl}, we merely give them in a more explicit form building further upon their results, which also gives a more uniform presentation compared to the irretractable case.

The strategy to tackle the irretractable case is 
to first restrict to the case when the permutation group of a solution is a $p$-group (\cref{sec:irretractable_cycle_sets_sylow}) and then construct all irretractable solutions by an extension procedure (\cref{sec: controlling p'}).

We conclude this article by comparing the found solutions with a family of solutions that have recently been constructed by Ced\'o and Okni\'nski \cite{CO_simple}, thus answering \cite[Question 7.4]{CO_simple} affirmatively. Furthermore, we give an enumeration of the irretractable solutions.

\section{Preliminaries}\label{sec:preliminaries}

Recall that a \emph{brace} is a triple $(A,+,\circ),$ where $(A,+)$ is an abelian group and $(A,\circ)$ is a group such that for all $a,b,c \in A$ a skew left distributivity holds $$a\circ(b+c)=a\circ b-a+a\circ c,$$ 
where $-a$ denotes the inverse of $a$ in $(A,+)$. 
Similarly, the inverse of an element $a\in A$ with respect to the operation $\circ$ will be denoted by $\overline{a}.$

The group $A^+=(A,+)$ is the \emph{additive group} of $A$ and $A^\circ=(A,\circ)$ is the 
\emph{multiplicative group} of $A$.
We will also denote the conjugation by $a$ in $A^\circ$ as ${}^ab=a\circ b\circ \overline{a}$, for all $a,b\in A$. For $n\in \Z$, $a\in A$, the $n$-th power of $a$ in $A^+$ is denoted $na$ and the $n$-th power in $A^\circ$ is denoted $a^{\circ n}$.

There is an action by automorphisms of $A^\circ$ on $A^+$ called the \emph{$\lambda$-action} defined by
\[
\lambda_a(b)=-a+a\circ b,
\]
for all $a,b\in A$. Related to this, there is also the operation 
$$a*b=\lambda_a(b)-b = -a + a \circ b - b,$$
which is easily verified to be left distributive with respect to the addition.
The set of fixed points of the $\lambda$-action is denoted by
\[
\Fix(A)=\{a\in A \colon b+a=b\circ a\text{ for all }b\in A\}=\{a\in A\mid b*a=0\text{ for all }b\in A\}.
\]

A \emph{subbrace} of $A$ is simply a subset $I$ of $A$ that is a subgroup of both $A^+$ and $A^\circ$.
 
A \emph{left ideal} of $A$ is a subset $I$ of $A$ such that $I$ is a subgroup of $A^+,$ and $\lambda_a(I)\subseteq I$, for all $a\in A$. Note that, in particular, every characteristic subgroup of $A^+$ is a left ideal. If additionally, $I$ is a normal subgroup of $A^\circ$, one calls $I$ an \emph{ideal} of $A$.

The \emph{socle of $A$} is the ideal 
\[
\Soc(A)=\ker\lambda=\{a\in A\mid a\circ b=a+b \text{ for all }b\in A\}.
\]

\begin{lem}
\label{lem:lambda_of_fix_is_conjugation}
Let $A$ be a brace, then 
\[
\lambda_a(b)=-a+{}^ab+\lambda_{{}^ab}(a)\textnormal{ for all }a,b\in A.
\]
In particular, if $a\in\Fix(A)$, then $\lambda_a(b)={}^ab$, for all $b\in A$ and thus $\lambda_a$ is a brace automorphism of $A$.
\end{lem}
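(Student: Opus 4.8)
The plan is to derive the displayed identity by a short computation resting on one elementary observation, and then to read off the two consequences. The observation I would record first is that the definition $\lambda_x(y)=-x+x\circ y$ rearranges to
\[
x+\lambda_x(y)=x\circ y\quad\text{for all }x,y\in A.
\]
I would also note that the additive and multiplicative identities of a brace coincide (setting $b=c=0$ in the skew distributivity gives $a\circ 0=a$, so $0$ is the $\circ$-identity), whence $\overline{a}\circ a=0$ and therefore ${}^ab\circ a=a\circ b\circ\overline{a}\circ a=a\circ b$. With these two facts the main identity is immediate: applying the rearranged definition with $x={}^ab$ and $y=a$ gives ${}^ab+\lambda_{{}^ab}(a)={}^ab\circ a=a\circ b$, and hence
\[
-a+{}^ab+\lambda_{{}^ab}(a)=-a+a\circ b=\lambda_a(b),
\]
as claimed. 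The only structural inputs are the definition of $\lambda$, the coincidence of the two identities, and the commutativity of $+$.

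For the \emph{in particular} part I would unwind the definition of $\Fix(A)$: the condition $b\circ a=b+a$ for all $b$ is equivalent to $\lambda_b(a)=a$ for all $b$, i.e. $a$ is fixed by every $\lambda$-map. Specialising to $b={}^ab$ collapses the correction term $\lambda_{{}^ab}(a)$ to $a$, so the main identity reduces to $\lambda_a(b)=-a+{}^ab+a={}^ab$ using commutativity of $+$. To conclude that $\lambda_a$ is then a brace automorphism, I would observe that $\lambda_a$ is already an automorphism of $A^+$, since the $\lambda$-action is an action of $A^\circ$ on $A^+$ by group automorphisms; meanwhile the equality $\lambda_a={}^a(-)$ just established exhibits $\lambda_a$ as conjugation by $a$ in $A^\circ$, hence an automorphism of $A^\circ$. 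A bijection respecting both operations is by definition a brace automorphism, which finishes the statement.

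The argument carries no genuine obstacle; the single point demanding care is the bookkeeping between the noncommutative operation $\circ$ and the commutative operation $+$, in particular remembering that the simplification of ${}^ab\circ a$ uses the multiplicative cancellation $\overline{a}\circ a=0$ and not any additive identity.
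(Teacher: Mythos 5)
Your argument is correct and is essentially the paper's own proof: the identity follows from the single-line computation $\lambda_a(b)=-a+a\circ b=-a+{}^ab\circ a=-a+{}^ab+\lambda_{{}^ab}(a)$, and your treatment of the \emph{in particular} clause (collapsing $\lambda_{{}^ab}(a)$ to $a$ and noting that $\lambda_a$ is simultaneously an automorphism of $A^+$ and conjugation in $A^\circ$) fills in exactly the details the paper leaves implicit. No issues.
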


\begin{proof}
    Observe that if $a,b\in A$, then
    \[
    \lambda_a(b) = -a + a\circ b = -a + {}^ab\circ a  = -a + {}^ab + \lambda_{{}^ab}(a).
    \qedhere
    \]
\end{proof}

\begin{lem} \label{lem:fixed_points_are_not_central}
Let $A$ be a brace. Then $\Fix(A) \cap Z(A^{\circ}) \subseteq \Soc(A)$. 
\end{lem}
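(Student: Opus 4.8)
The plan is to combine the two membership conditions directly, leveraging \cref{lem:lambda_of_fix_is_conjugation}. I take an element $a \in \Fix(A) \cap Z(A^\circ)$ and aim to show $\lambda_a = \id$, since $\Soc(A) = \ker\lambda$ is precisely the set of elements whose $\lambda$-map is trivial. So everything reduces to verifying $\lambda_a(b) = b$ for all $b \in A$.

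First I would use the hypothesis $a \in \Fix(A)$. The ``in particular'' clause of \cref{lem:lambda_of_fix_is_conjugation} then yields $\lambda_a(b) = {}^a b = a \circ b \circ \overline{a}$ for every $b \in A$; that is, the $\lambda$-action of $a$ coincides with conjugation by $a$ in the multiplicative group $A^\circ$. Next I would bring in $a \in Z(A^\circ)$: centrality means $a \circ b = b \circ a$ for all $b$, whence conjugation by $a$ is the identity map, ${}^a b = a \circ b \circ \overline{a} = b \circ a \circ \overline{a} = b$. Chaining the two steps gives $\lambda_a(b) = {}^a b = b$ for all $b$, i.e.\ $a \in \Soc(A)$, as desired.

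I expect there to be no real obstacle here: once \cref{lem:lambda_of_fix_is_conjugation} is available, the entire content is that membership in $\Fix(A)$ rewrites $\lambda_a$ as conjugation, and membership in $Z(A^\circ)$ makes that conjugation trivial. The only point deserving a word of care is recalling the standard brace identity that the additive zero is the multiplicative identity, so that $a \circ \overline{a} = 0$ and the conjugation genuinely collapses to the identity.
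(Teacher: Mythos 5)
Your argument is correct and coincides with the paper's own proof: both apply \cref{lem:lambda_of_fix_is_conjugation} to identify $\lambda_a$ with conjugation by $a$, then use centrality in $A^\circ$ to conclude $\lambda_a=\id$, i.e.\ $a\in\Soc(A)=\ker\lambda$. Nothing further is needed.
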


\begin{proof}
Let $f \in \Fix(A) \cap Z(A^{\circ})$, then \cref{lem:lambda_of_fix_is_conjugation} implies that 
$\lambda_f(a) = {}^fa = a$ for all $a \in A$ , i.e. $f \in \Soc(A)$.
\end{proof}

\begin{thm}\label{thm:trivial_action_on_quotient}
Let $A$ be a brace and $B$ a subbrace such that $B^{\circ}$ acts trivially on $A^+ / B^+$ under the $\lambda$-action. Furthermore, assume that $B^{\circ}$ is normal in $A^{\circ}$. Then $B$ is an ideal of $A$.
\end{thm}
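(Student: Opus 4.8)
The plan is to check the definition of an ideal directly. Since $B$ is a subbrace it is already a subgroup of $A^+$, and $B^\circ \trianglelefteq A^\circ$ is assumed; the only missing ingredient is therefore the left ideal condition, namely that $\lambda_a(B)\subseteq B$ for every $a\in A$. I expect this inclusion to be the entire content of the argument, with the subgroup and normality parts being immediate from the hypotheses.

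The key device is the identity from \cref{lem:lambda_of_fix_is_conjugation}, which expresses $\lambda_a(b)$ through the conjugate ${}^ab$. Fixing $a\in A$ and $b\in B$, I would write $\lambda_a(b)=-a+{}^ab+\lambda_{{}^ab}(a)$. Normality of $B^\circ$ in $A^\circ$ gives ${}^ab\in B$, so setting $b'={}^ab\in B$ turns this into $\lambda_a(b)=-a+b'+\lambda_{b'}(a)$. At this point the triviality hypothesis applies to $b'$: since $B^\circ$ acts trivially on $A^+/B^+$, we have $\lambda_{b'}(a)\in a+B^+$, say $\lambda_{b'}(a)=a+c$ for some $c\in B$. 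Substituting and using that $A^+$ is abelian, the terms $-a$ and $+a$ cancel, leaving $\lambda_a(b)=b'+c$, which lies in $B$ because $B$ is closed under addition. This yields $\lambda_a(B)\subseteq B$ and finishes the proof.

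The subtlety to watch is the interplay of the two hypotheses, which act in opposite directions: normality controls the conjugation $b\mapsto{}^ab$ (an element of $B$ conjugated by an arbitrary element stays in $B$), whereas triviality controls the $\lambda$-action of a $B$-element back onto $a$ modulo $B$. Neither hypothesis alone bounds the ``wrong-way'' quantity $\lambda_a(b)$ that we must control, and the identity of \cref{lem:lambda_of_fix_is_conjugation} is precisely the bridge converting $\lambda_a(b)$ into the ``right-way'' action $\lambda_{b'}(a)$ to which the triviality hypothesis applies. Once this conversion is in place the computation is routine, so I do not anticipate any further obstacle.
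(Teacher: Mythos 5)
Your proof is correct and is essentially identical to the paper's: both apply \cref{lem:lambda_of_fix_is_conjugation} to write $\lambda_a(b)=-a+{}^ab+\lambda_{{}^ab}(a)$, use normality of $B^\circ$ to get ${}^ab\in B$, use the triviality hypothesis to get $\lambda_{{}^ab}(a)\in a+B$, and cancel the $\pm a$ terms using commutativity of $A^+$. No differences worth noting.
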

\begin{proof}
Let $b \in B$. We have to show that $\lambda_a(b) \in B$ for all $a \in A$. Note that ${}^ab \in B$ because $B$ is normal in $A^\circ$, therefore $\lambda_{{}^ab}(a) \in a + B$ as $B^{\circ}$ acts trivially on $A^+/B^+$. Using \cref{lem:lambda_of_fix_is_conjugation} and the normality of $B^+$, we deduce that
\[
\lambda_a(b) = -a + {}^ab + \lambda_{{}^ab}(a) \in -a + B + a + B = B.\qedhere
\]
\end{proof}
Given a prime $p$, we mean by a \emph{$p$-brace} a finite brace whose size is a power of $p$.

\begin{pro} \label{pro:maximal_subbraces_of_p_braces_are_ideals}
Let $A$ be a $p$-brace and $B \subseteq A$ a subbrace with $|A| = p|B|$. Then $B$ is an ideal of $A$.
\end{pro}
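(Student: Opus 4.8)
The plan is to obtain this as an immediate consequence of \cref{thm:trivial_action_on_quotient}, whose two hypotheses I would verify for the index-$p$ subbrace $B$: namely that $B^\circ$ is normal in $A^\circ$, and that $B^\circ$ acts trivially on the quotient $A^+/B^+$ under the $\lambda$-action.

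For normality, I would use that $|A| = p|B|$ forces $B^\circ$ to be a subgroup of index $p$ in the finite $p$-group $A^\circ$. Since a subgroup of prime index in a $p$-group is necessarily maximal, and every maximal subgroup of a $p$-group is normal, we obtain $B^\circ \trianglelefteq A^\circ$ with no further work.

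For the triviality of the action, I would first observe that it is well-defined: for $b, b' \in B$ we have $\lambda_b(b') = -b + b \circ b' \in B$ since $B$ is closed under both $+$ and $\circ$, so each $\lambda_b$ preserves $B^+$ and hence $B^\circ$ permutes the cosets of $B^+$. Because $A^+$ is abelian and $B^+$ has index $p$, the quotient $A^+/B^+$ is cyclic of order $p$, so its automorphism group has order $p-1$. The action of $B^\circ$ thus gives a homomorphism from a $p$-group into a group of order coprime to $p$, which must be trivial.

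Feeding both facts into \cref{thm:trivial_action_on_quotient} then yields that $B$ is an ideal of $A$. I do not anticipate a genuine obstacle here: the argument amounts to the observation that an index-$p$ subbrace automatically satisfies both the normality and the trivial-action conditions, the only mild care needed being the well-definedness of the induced action on $A^+/B^+$ and the coprimality $\gcd(p,p-1)=1$.
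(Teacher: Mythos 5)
Your argument is correct and follows essentially the same route as the paper: both deduce normality of $B^\circ$ from its index $p$ in the $p$-group $A^\circ$ (the paper via the smallest-prime-divisor criterion, you via maximality in a $p$-group) and both observe that the $p$-group $B^\circ$ must act trivially on the cyclic group $A^+/B^+$ of order $p$, then invoke \cref{thm:trivial_action_on_quotient}. Your extra remarks on well-definedness of the induced action and the coprimality $\gcd(p,p-1)=1$ just make explicit what the paper leaves implicit.
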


\begin{proof}
$A^+/B^+$ is cyclic of order $p$ and therefore is acted upon trivially by $B^{\circ}$. Furthermore, as $[A^{\circ}:B^{\circ}] = p$ is the smallest prime divisor of $|A|$, the subgroup $B^{\circ}$ is normal in $A^{\circ}$. It follows from \cref{thm:trivial_action_on_quotient} that $B$ is an ideal of $A$.
\end{proof}

\begin{lem} \label{lem:nontrivial_fix}
If $A$ is a $p$-brace, then $\Fix(A) \neq 0$. 
Moreover, if $A^\circ$ is abelian, then $\Soc(A)\neq0$.
\end{lem}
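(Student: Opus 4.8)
The plan is to recognise $\Fix(A)$ as the set of common fixed points of the $\lambda$-action and then invoke the standard fixed-point count for $p$-groups. First I would unwind the definition: since each $\lambda_b$ is an automorphism of $A^+$ and $b \circ a = b + \lambda_b(a)$, the condition $b + a = b\circ a$ is equivalent to $\lambda_b(a) = a$. Hence
\[
\Fix(A) = \{a \in A \mid \lambda_b(a) = a \text{ for all } b \in A\}
\]
is precisely the fixed-point set of the action of $A^\circ$ on the underlying set of $A^+$ given by $b \mapsto \lambda_b$. Because $A$ is a $p$-brace, this is an action of a group of order $p^n$ on a set of cardinality $p^n$.

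Next I would count orbits. Every orbit of a $p$-group has size a power of $p$, so the orbits of size greater than $1$ all have cardinality divisible by $p$, while the orbits of size $1$ are exactly the elements of $\Fix(A)$. Summing orbit sizes over the partition of $A^+$ gives $|A| = p^n \equiv |\Fix(A)| \pmod p$. Under the tacit nondegeneracy hypothesis $A \neq 0$ (the only case in which the statement can hold), $p \mid |A|$, so $p \mid |\Fix(A)|$; since $0 \in \Fix(A)$ we have $|\Fix(A)| \geq 1$, and divisibility by $p$ then forces $|\Fix(A)| \geq p$. In particular $\Fix(A) \neq 0$.

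For the \emph{moreover} clause, I would feed the first part into \cref{lem:fixed_points_are_not_central}. If $A^\circ$ is abelian, then $Z(A^\circ) = A$, so
\[
\Fix(A) = \Fix(A) \cap Z(A^\circ) \subseteq \Soc(A)
\]
by that lemma; alternatively, \cref{lem:lambda_of_fix_is_conjugation} gives directly that each $a \in \Fix(A)$ satisfies $\lambda_a(b) = {}^ab = b$ for all $b$, i.e.\ $a \in \Soc(A)$. Either way $\Soc(A) \supseteq \Fix(A) \neq 0$.

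The argument is essentially routine, and I do not expect a serious obstacle. The only points requiring care are verifying that $\Fix(A)$ genuinely coincides with the fixed-point set of a group action (so that the class equation applies) and keeping track of the nontriviality assumption $A \neq 0$, which is implicit in the statement.
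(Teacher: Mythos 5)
Your proof is correct and follows the same route as the paper: identify $\Fix(A)$ with the fixed-point set of the $\lambda$-action of the $p$-group $A^\circ$ on $A^+$, invoke the standard fixed-point count for $p$-group actions (which you spell out via the class equation, where the paper simply cites it as well-known), and deduce the \emph{moreover} clause from \cref{lem:fixed_points_are_not_central}. No issues.
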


\begin{proof}
$\Fix(A)$ is the set of fixed points under the action $\lambda: A^{\circ} \to \Aut(A^+)$. It is well-known that an action of a $p$-group by automorphisms of a $p$-group always has non-trivial fixed points.

If we assume that $A^\circ$ is abelian,
\cref{lem:fixed_points_are_not_central} implies that $\Fix(A) \subseteq\Soc(A),$
hence $\Soc(A)\neq0$ as well.
\end{proof}

If $X$ is a set, we denote by $\Sym_X$ the symmetric group consisting of all permutations of $X$. If $n$ is a positive integer, we write $\Sym_n$ to indicate $\Sym_{\{ 1, \ldots, n\}}$.

Let $G$ be a group acting on a set $X$. A \emph{system of imprimitivity} is a partition $X = \bigsqcup_{i \in I}X_i$ that is invariant under the group action in the sense that for any $g \in G$, $i \in I$, there is a $j \in I$ such that $g \cdot X_i = X_j$. The subsets $X_i$ are called \emph{blocks} of the system. A system of imprimitivity is \emph{trivial} if either $|I|=1$ or $|X_i| = 1$ for all $i \in I$. Else it is non-trivial.

Recall that if $X$ is finite and $G$ is transitive, then $G$ acts transitively on the blocks of a system of imprimitivity, and $|X_i| = |X_j|$ for all $i,j \in X$. In particular, all $|X_i|$ divide $|X|$.

\begin{lem} \label{lem:lucchini}
    Let $G$ be a non-abelian $p$-group with a transitive action on a set $X$ of size $p^2$. Then $G$ has at most one non-trivial system of imprimitivity.
\end{lem}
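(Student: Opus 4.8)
The plan is to translate the statement into the subgroup lattice of $G$ via the standard correspondence for transitive actions, and then to exploit that the point stabiliser of a faithful transitive action contains no non-trivial normal subgroup. Fix $x\in X$ and write $G_x$ for its stabiliser; transitivity gives $[G:G_x]=|X|=p^2$. Recall that the systems of imprimitivity of a transitive action are in order-preserving bijection with the subgroups $H$ satisfying $G_x\leq H\leq G$, the block through $x$ being the orbit $Hx$ and $H$ its setwise stabiliser. A system is non-trivial exactly when $G_x<H<G$, and since $[G:G_x]=p^2$ every such $H$ satisfies $[G:H]=[H:G_x]=p$; in particular $H$ is a maximal subgroup of $G$ containing $G_x$. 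Thus it suffices to prove that $G_x$ lies in \emph{at most one} maximal subgroup of $G$. Here I use that $G$ acts faithfully (it is a group of permutations of $X$), so that $G_x$ is core-free, i.e.\ it contains no non-trivial subgroup normal in $G$.

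First I would argue by contradiction, assuming that $M_1\neq M_2$ are maximal subgroups of $G$ with $G_x\subseteq M_1\cap M_2$. Since $G$ is a $p$-group, $M_1$ and $M_2$ are normal of index $p$, so $M_1M_2$ is a subgroup properly containing the maximal subgroup $M_1$, forcing $M_1M_2=G$. The product formula then yields $[G:M_1\cap M_2]=[G:M_1]\,[M_1M_2:M_2]=p^2=[G:G_x]$, and as $G_x\subseteq M_1\cap M_2$ we conclude $G_x=M_1\cap M_2$. But $M_1\cap M_2$ is an intersection of normal subgroups, hence normal in $G$, so $G_x$ is a \emph{normal} point stabiliser of a transitive action; such a stabiliser fixes every point and therefore equals the kernel of the action. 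By faithfulness $G_x=1$, whence $|G|=p^2$ and $G$ is abelian, contradicting the hypothesis. This settles the claim, and with it the lemma.

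The only real subtlety is to recognise that both hypotheses are indispensable, which is exactly what pins down the argument. If one drops faithfulness, the action of an extraspecial group of order $p^3$ on the cosets of its centre is a transitive action of a non-abelian $p$-group on $p^2$ points admitting $p+1$ distinct systems of imprimitivity; and the regular action of $(\Z_p)^2$ shows that non-abelianness cannot be dropped either. Accordingly, the heart of the proof is the reduction to maximal subgroups together with the core-freeness of $G_x$, after which the index bookkeeping is routine. Equivalently, one may phrase the final step through the Frattini subgroup: non-abelianness gives $\Phi(G)\neq 1$, core-freeness gives $\Phi(G)\not\subseteq G_x$, and then $G_x\Phi(G)$ is visibly the unique maximal subgroup containing $G_x$.
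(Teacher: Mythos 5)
Your proof is correct, and it takes a genuinely different route from the paper's. The paper settles the lemma in two lines by citing a theorem of Lucchini: a transitive permutation group on $p^2$ points with more than one non-trivial system of imprimitivity embeds in $\Sym_p\times\Sym_p$, which for a $p$-group forces $G\leq\Z_p\times\Z_p$, hence abelian. You instead give a self-contained elementary argument via the Galois correspondence between block systems and subgroups $G_x\leq H\leq G$: every intermediate $H$ has index $p$ and is therefore maximal, two distinct maximal subgroups $M_1\neq M_2$ containing $G_x$ are normal of index $p$ (being maximal in a $p$-group), so $G_x=M_1\cap M_2$ is normal and, being a core-free point stabiliser, trivial, whence $|G|=p^2$ and $G$ is abelian --- a contradiction. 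Every step checks out, and the closing Frattini rephrasing ($G_x\Phi(G)$ is the unique maximal subgroup over $G_x$) is a clean variant. What each approach buys: yours removes the dependence on an external and considerably deeper result, at the cost of a longer write-up; the paper's citation is shorter on the page and Lucchini's theorem applies to arbitrary transitive groups, not just $p$-groups. One point worth flagging explicitly: both arguments silently read ``action'' as ``faithful action'' ($G\leq\Sym_X$), and your extraspecial-group example of order $p^3$ acting on the cosets of its centre shows faithfulness cannot be dropped from the statement; in the paper's applications $G=\G(X)\leq\Sym_X$, so no harm is done, but noting the hypothesis is a worthwhile catch.
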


\begin{proof}
If there is more than one non-trivial system of imprimitivity, a theorem of Lucchini \cite[Theorem 1]{Lucchini_imprimitive} implies that $G \leq \Sym_p \times \Sym_p$. But if $G$ is a $p$-group, this forces $G$ to be isomorphic to a subgroup of $\Z_p \times \Z_p$ and therefore to be abelian. 
\end{proof}

Let $G, H$ be groups such that $G$ acts on a set $X$. The \emph{wreath product} is the semidirect product $G \wr_X H = G \ltimes H^X$,
where $H^X=\{(h_x)_{x \in X}\mid h_x\in H\text{ for all }x\in X\}$ is the iterated direct product of $H$ with itself indexed by $X$
and $G$ acts on $H^X$ by $g \cdot (h_x)_{x \in X} = (h_{g^{-1} \cdot x})_{x \in X}$. If the action of $G$ on $X$ is clear, we will generally suppress the subscript-$X$ and write $G \wr H$.

If additionally, $H$ acts on a set $Y$ then $G \wr H$ acts on $X \times Y$ by permutations of the form
\[
(x,y) \mapsto (g \cdot x, h_x \cdot y).
\]
where $g \in G$, and $(h_x)_{x \in X}$ is a family of elements in $H$.

\begin{pro}\label{pro:impritive_solvable_action_on_Z2}
    Let $G\leq \Sym_{\Z_p\times \Z_p}$ be a transitive solvable group such that the sets $\{a\}\times \Z_p$, $a\in \Z_p$, form a system of imprimitivity, then $G$ is conjugated to a subgroup of $\mathrm{AGL}(1,p)\wr \mathrm{AGL}(1,p)$. If moreover $G$ is a $p$-group, then $G$ is conjugated to a subgroup of $\Z_p\wr \Z_p$.
\end{pro}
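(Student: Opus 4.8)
The plan is to realize $G$ inside the natural overgroup attached to the block system and then invoke the classical theorem of Galois on solvable transitive groups of prime degree. Since $G$ preserves the partition of $\Z_p \times \Z_p$ into the blocks $\{a\} \times \Z_p$, it is contained in the setwise stabilizer of this partition, which is precisely $\Sym_p \wr \Sym_p$: the outer $\Sym_p$ permutes the $p$ blocks through the first coordinate, while the inner copies act independently within the blocks through the second coordinate. The induced action on the set of blocks gives a homomorphism $\pi \colon G \to \Sym_p$ whose image is transitive (because $G$ is transitive on $\Z_p\times\Z_p$) and solvable (being a quotient of $G$). The key external input is Galois's theorem: a solvable transitive subgroup of $\Sym_p$ is conjugate in $\Sym_p$ to a subgroup of $\mathrm{AGL}(1,p)$.

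First I would conjugate $G$ by a permutation of the blocks (an outer element of $\Sym_p \wr \Sym_p$) so that $\pi(G) \leq \mathrm{AGL}(1,p)$. Next, consider the setwise stabilizer $G_0$ of the block $\{0\}\times\Z_p$. For a transitive imprimitive action the stabilizer of a block acts transitively on that block, so $G_0$ induces a transitive, solvable group of permutations of $\{0\}\times\Z_p \cong \Z_p$; applying Galois's theorem once more, I conjugate $G$ by an inner element supported on this single block (which leaves $\pi(G)$ untouched) so that $G_0$ acts affinely on $\{0\}\times\Z_p$.

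The crux is then to propagate this affine coordinate simultaneously to every block. Using transitivity of $\pi(G)$ on blocks, choose for each $a$ an element $t_a \in G$ with $t_0 = \id$ that maps the block $\{0\}\times\Z_p$ to $\{a\}\times\Z_p$, and use $t_a$ to pull back the chosen affine coordinate of the first block onto the block $\{a\}\times\Z_p$. For an arbitrary $g \in G$ and a block index $a$, write $b = \pi(g)(a)$; in the new coordinates the component of $g$ from block $a$ to block $b$ is conjugate, via the fixed coordinate on the first block, to the restriction of $t_b^{-1}\,g\,t_a$ to $\{0\}\times\Z_p$. Since $t_b^{-1}\,g\,t_a \in G_0$, this restriction is affine. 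Hence every block component of every element of $G$ is affine, which together with $\pi(G)\leq \mathrm{AGL}(1,p)$ places the conjugate of $G$ in $\mathrm{AGL}(1,p)\wr\mathrm{AGL}(1,p)$. I expect this synchronization step to be the main obstacle: obtaining an affine coordinate on each block in isolation is immediate from Galois, but producing a single conjugating permutation that makes all components affine at once is exactly what the transport elements $t_a$ achieve.

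For the final assertion, suppose in addition that $G$ is a $p$-group. Then $\pi(G)$ and the group induced by $G_0$ on the first block are transitive $p$-subgroups of $\Sym_p$; each therefore has order $p$, is generated by a $p$-cycle, and corresponds to the unique transitive subgroup $\Z_p \leq \mathrm{AGL}(1,p)$, namely the translations. Rerunning the transport argument, every block component now lies in this translation subgroup, and likewise $\pi(G)\leq\Z_p$, so the conjugate of $G$ is contained in $\Z_p \wr \Z_p$.
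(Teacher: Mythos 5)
Your proof is correct and follows essentially the same route as the paper: embed $G$ into $\Sym_p\wr\Sym_p$ via the block system and apply Galois's theorem on solvable transitive subgroups of $\Sym_p$ (cited in the paper from Huppert) to the induced block action and to the action of a block stabilizer on its block. The only differences are presentational: the paper leaves the synchronization of the per-block affine coordinates implicit and settles the $p$-group case by noting that $\Z_p\wr\Z_p$ is a Sylow $p$-subgroup of $\mathrm{AGL}(1,p)\wr\mathrm{AGL}(1,p)$, whereas you spell out the transport argument explicitly and rerun it for the $p$-group case.
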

\begin{proof}
    Let $G$ be as above. As $G$ respects the given system of imprimitivity, it is well-known that $G\leq \Sym_p\wr \Sym_p$, where the action of the wreath product on $\Z_p\times \Z_p$ is precisely as described above.

    As a special case of \cite[Chapter II, Satz 3.2]{huppert1983endliche}, we know that every solvable transitive subgroup of $\Sym_{\Z_p}$ is conjugated to a subgroup of $\mathrm{AGL}(1,p)$. So we conclude that $G$ is conjugated to a subgroup of $\mathrm{AGL}(1,p)\wr \mathrm{AGL}(1,p)$. As $\Z_p\wr \Z_p$ is a $p$-Sylow subgroup of $\mathrm{AGL}(1,p)\wr \mathrm{AGL}(1,p)$ also the last part of the statement follows.
\end{proof}
Note that elements of $\Z_p \wr \Z_p$ are precisely permutations of the form
\[
(a,x) \mapsto (a + \beta, x + \gamma_a),
\]
with $\beta,\gamma_0,\ldots,\gamma_{p-1}\in \Z_p$.

On any brace $A$ we have a canonical cycle set structure $(A,\cdot)$ where $a\cdot b=\lambda_a^{-1}(b)$, for all $a,b\in A$.
A subset $X\subseteq A$ is a \emph{cycle base} of $A$ if it is a sub-cycle set of $(A,\cdot)$ and also $X$ generates the groups $A^+$ and $A^\circ$. A cycle base is called \emph{transitive} if $X$ forms an orbit under the $\lambda$-action of $A$, or equivalently if $X$ is an indecomposable cycle set. 

To a cycle set $(X,\cdot)$ we associate its \emph{structure group} $$(G(X),\circ)=\langle X\mid x\circ y =\sigma^{-1}_x(y)\circ (\sigma^{-1}_x(y)\cdot x)\text{ for all }x,y\in X\rangle.$$
There is a unique way of defining an addition on $G(X)$ such that $x+y=x\circ \sigma_x(y)$ for $x,y \in X$ and $(G(X),+,\circ)$ is a brace, the \emph{structure brace} of $X$. The canonical map $\iota: (X,\cdot) \to (G(X),\cdot):x\mapsto x$ is always an injective cycle set homomorphism, so we can identify $X$ with $\iota(X)$. Note that it follows immediately from the definition that $\iota(X)$ is a cycle base of $G(X)$.

To any cycle set $X$ we associate its \emph{permutation group}, defined as 
\begin{equation*}
    \Gbr(X)=\langle \sigma_x\mid x\in X\rangle \subseteq \Sym_X.
\end{equation*}

Recall that a cycle set $X$ is \emph{indecomposable} if and only if $\G(X)$ acts transitively on $X$. The above group structure will also be denoted by $\Gbr(X)^\circ$ and we obtain, as described in \cite{CJO_Braces}, a brace structure $(\Gbr(X),+,\circ)$ by defining $\sigma_x^{-1}+\sigma_y^{-1}=\sigma_x^{-1}\circ \sigma^{-1}_{\sigma_x(y)}$ for $x,y\in X$. Note that this means that we obtain a surjective brace homomorphism $G(X)\twoheadrightarrow \G(X):x\mapsto \sigma_x^{-1}$, where the kernel is precisely $\Soc(G(X))$.
As a result, the canonical map $$\pi:(X,\cdot)\mapsto (\Gbr(X),\cdot); \quad x\mapsto \sigma_x^{-1},$$
becomes a cycle set homomorphism. The image $\pi(X)$ is called the \emph{retraction} of $X$, denoted $\mathrm{Ret}(X)$. If $\pi$ is injective, hence $(X,\cdot)$ is isomorphic to a sub-cycle set of $(\Gbr(X),\cdot)$, then we say that $X$ is \emph{irretractable}. When $\pi$ is not injective, $(X,\cdot)$ is \emph{retractable}. Note that $\pi(x)=\pi(y)$ if and only if $\sigma_x=\sigma_y$. If moreover, we can obtain a one-element cycle set by iteratively repeating the retraction process on a cycle set $X$, we say that $X$ has \emph{finite multipermutation level}. More precisely, if $k$ is the smallest value such that $|\Ret^k(X)|=1$, then we say that $X$ has \emph{multipermutation level} $k$.

By a result of Ced\'o and Okni\'nski \cite[Lemma 3.3]{CO_SquarefreeIndecomposable} we have:
\begin{pro}\label{pro:fibres_have_same_cardinality}
    Let $f: X \twoheadrightarrow Y$ be a surjective cycle set homomorphism between finite cycle sets. If $X$ is indecomposable, then $Y$ is also indecomposable and the fibres $f^{-1}(y)$ all have the same cardinality. In particular, $|Y|$ divides $|X|$.
\end{pro}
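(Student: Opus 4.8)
The plan is to exploit the single structural feature of a cycle set homomorphism, namely that it intertwines the $\sigma$-maps. Concretely, since $f(x\cdot y)=f(x)\cdot f(y)$ for all $x,y\in X$, we have $f\circ\sigma_x=\sigma_{f(x)}\circ f$ as maps $X\to Y$, where on the left $\sigma_x\in\G(X)$ and on the right $\sigma_{f(x)}\in\G(Y)$. This equivariance is the only input I will need.

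First I would promote this to a relation between the permutation groups. I claim there is a surjective group homomorphism $\phi\colon \G(X)\to\G(Y)$ with $\phi(\sigma_x)=\sigma_{f(x)}$ and such that $f$ is $\phi$-equivariant, i.e. $f(g\cdot x)=\phi(g)\cdot f(x)$ for all $g\in\G(X)$ and $x\in X$. To build $\phi$ one writes $g\in\G(X)$ as a word in the $\sigma_x^{\pm1}$ and applies $f$ letterwise; the key point — and the place where surjectivity of $f$ is genuinely used — is well-definedness: if two words $w_1,w_2$ represent the same $g$, then the letterwise images $\overline{w_1},\overline{w_2}\in\G(Y)$ satisfy $\overline{w_1}\circ f=f\circ g=\overline{w_2}\circ f$, and cancelling the surjection $f$ on the right gives $\overline{w_1}=\overline{w_2}$. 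The same cancellation shows $\phi$ is multiplicative, and $\phi$ is onto since every generator $\sigma_y=\sigma_{f(x)}$ of $\G(Y)$ is hit.

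With $\phi$ in hand the three claims follow quickly. For indecomposability of $Y$: given $y_1,y_2\in Y$, choose preimages $x_i\in f^{-1}(y_i)$ and some $g\in\G(X)$ with $g\cdot x_1=x_2$ (possible as $X$ is indecomposable, hence $\G(X)$ is transitive); then $\phi(g)\cdot y_1=f(g\cdot x_1)=f(x_2)=y_2$, so $\G(Y)$ is transitive and $Y$ is indecomposable. For the fibres, equivariance yields $g\cdot f^{-1}(y)=f^{-1}(\phi(g)\cdot y)$ — the inclusion $\subseteq$ is immediate, and the reverse follows by applying the same identity to $g^{-1}$ — so $g$ restricts to a bijection $f^{-1}(y)\to f^{-1}(\phi(g)\cdot y)$. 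Hence $|f^{-1}(y)|=|f^{-1}(\phi(g)\cdot y)|$ for every $g$, and since $\phi$ is surjective and $\G(Y)$ is transitive, any two points of $Y$ are related by some $\phi(g)$; thus all fibres share a common cardinality $c$. Finally $|X|=\sum_{y\in Y}|f^{-1}(y)|=c\,|Y|$, so $|Y|$ divides $|X|$.

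The word-level verifications are routine; the one point deserving care — the main, if modest, obstacle — is the well-definedness of $\phi$, where one must resist treating $g\mapsto\overline{g}$ as obviously independent of the chosen word and instead invoke the right-cancellativity coming from surjectivity of $f$. Once that is settled, everything else is bookkeeping with the equivariance relation.
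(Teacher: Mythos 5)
Your proof is correct. Note that the paper does not actually prove this proposition itself --- it is imported verbatim as \cite[Lemma 3.3]{CO_SquarefreeIndecomposable} --- so there is no in-text argument to compare against; your write-up supplies the standard self-contained proof. The two pillars are exactly right: the intertwining relation $f\circ\sigma_x=\sigma_{f(x)}\circ f$, promoted (using surjectivity of $f$ to cancel on the right) to a surjective homomorphism $\phi:\G(X)\to\G(Y)$ with $f(g\cdot x)=\phi(g)\cdot f(x)$; and then transitivity of $\G(X)$ pushing down to transitivity of $\G(Y)$, with each $g\in\G(X)$ restricting to a bijection $f^{-1}(y)\to f^{-1}(\phi(g)\cdot y)$, which forces all fibres to have a common size $c$ and gives $|X|=c\,|Y|$. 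Your emphasis on the well-definedness of $\phi$ is the right place to be careful. One cosmetic remark: surjectivity of $f$ is in fact used in three places (well-definedness of $\phi$, surjectivity of $\phi$, and the existence of preimages when transporting transitivity), not only in the first, but this does not affect the validity of the argument.
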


\section{Indecomposable retractable cycle sets of size \texorpdfstring{$p^2$}{p2}}\label{sec: finite mpl}

Let $e_i$ be the canonical basis of $\bigoplus_{i\in \Z}\Z$. We define 
$$c_k=\begin{cases} 
\sum_{i=1}^{-k} -e_{1-i} & k<0\\
      0 & k=0 \\
      \sum_{i=1}^k e_i & k>0 
   \end{cases}$$

For the reader's convenience, we first recall a special case of the construction of indecomposable cycle sets of multipermutation level $2$ by Jedli\v{c}ka and Pilitowska \cite[Proposition 5.1, Proposition 5.7]{jedlicka_pilitowska}.
   
\begin{thm}\label{thm:jedlicka}
    Let $X$ be an indecomposable cycle set of multipermutation level 2 such that $|\Ret(X)|=m$, then there exist
    \begin{enumerate}
        \item a subgroup $H\leq \bigoplus_{i\in \Z}\Z$ such that $c_i-c_{i+m}\in H$, for all $i\in \Z$,
        \item $s\in \left( \bigoplus_{i\in \Z}\Z \right) /H$,
    \end{enumerate} such that $X$ is isomorphic to a cycle set of the form $X=(\Z\times\bigoplus_{i\in \Z}\Z)/{\sim}$ where 
    $$(a,x)\sim (b,y)\iff a-b\equiv 0 \pmod{m}\text{ and }x-y\equiv \frac{a-b}{m}s\pmod{H}$$
    and $$[(a,x)]\cdot [(b,y)]=[(b-1,y-c_{a-b}+c_{-b})].$$
    Moreover, different choices of $H$ and $s$ yield non-isomorphic cycle sets.
\end{thm}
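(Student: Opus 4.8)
The statement is a reformulation of the classification of indecomposable cycle sets of multipermutation level $2$ due to Jedli\v{c}ka and Pilitowska \cite[Proposition 5.1, Proposition 5.7]{jedlicka_pilitowska}, so the plan is to specialize their construction to the present situation and match notations. First I would pin down the retract. Since $X$ is indecomposable of multipermutation level $2$, \cref{pro:fibres_have_same_cardinality} shows that $\Ret(X)$ is again indecomposable, and by definition it has multipermutation level $1$; an indecomposable cycle set of this type and size $m$ is, up to isomorphism, $\Z_m$ with $i\cdot j=j-1$ (all $\sigma_i$ coincide and act as a single $m$-cycle). Thus $\pi\colon X\twoheadrightarrow\Ret(X)\cong\Z_m$ has all fibres of a common size $n$, again by \cref{pro:fibres_have_same_cardinality}, and $X$ is an extension of $\Z_m$ by these fibres.

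Next I would write this extension in coordinates. Choosing a section of $\pi$ identifies $X$ with $\Z_m\times F$ for a fibre set $F$, with the cycle-set operation governed by a dynamical cocycle depending only on the two $\Z_m$-coordinates, since the maps $\sigma_x$ factor through $\pi$. The role of $\bigoplus_{i\in\Z}\Z$ is to serve as the universal abelian group carrying this cocycle: the basis vector $e_i$ records the increment made in the fibre when the first coordinate passes from $i-1$ to $i$, so that $c_{a-b}$ (note that $c_k-c_{k-1}=e_k$ for all $k\in\Z$) is exactly the accumulated fibre displacement appearing in $[(a,x)]\cdot[(b,y)]=[(b-1,y-c_{a-b}+c_{-b})]$. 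Under this dictionary $H$ is the subgroup of relations that cuts the universal fibre down to the finite fibre $F\cong(\bigoplus_{i\in\Z}\Z)/H$, while $s$ is the monodromy picked up on lifting the cyclic base $\Z_m$ to $\Z$, so that $(a,x)\sim(a+m,x+s)$. The conditions $c_i-c_{i+m}\in H$ and the choice of $s\in(\bigoplus_{i\in\Z}\Z)/H$ are then precisely what \cite[Proposition 5.1]{jedlicka_pilitowska} requires for consistency after closing up the base.

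It remains to verify that this data really defines the asserted cycle set and that the correspondence is a bijection onto isomorphism classes. For well-definedness I would check that $\sim$ is a congruence for the displayed operation: comparing $[(a,x)]\cdot[(b,y)]$ with $[(a',x')]\cdot[(b',y')]$ when $(a,x)\sim(a',x')$ and $(b,y)\sim(b',y')$ reduces, after cancelling the $y-y'$ term against the uniform base shift $b\mapsto b-1$, to showing $c_{a'-b'}-c_{a-b}-(c_{-b'}-c_{-b})\in H$, which follows by telescoping from $c_{i+m}-c_i\in H$ since $a'-b'\equiv a-b$ and $-b'\equiv-b\pmod m$. The cycle-set axioms \eqref{eq:c1_cycloid_equation}--\eqref{eq:c3}, indecomposability, and the identification $\Ret(X)\cong\Z_m$ (so that the level is exactly $2$) are inherited from \cite[Proposition 5.1]{jedlicka_pilitowska}, and finiteness forces $(\bigoplus_{i\in\Z}\Z)/H$ to be finite with $|X|=mn$. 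For the final clause I would invoke \cite[Proposition 5.7]{jedlicka_pilitowska}, or equivalently recover $H$ and $s$ as isomorphism invariants of $X$ — $H$ as the kernel of the comparison map from the universal group to the fibre group over a base point, and $s$ as the monodromy class — so that distinct pairs yield non-isomorphic cycle sets. I expect the main obstacle to be bookkeeping rather than conceptual: one must check that the compact $c_k$-presentation above is genuinely equivalent to the data in \cite{jedlicka_pilitowska}, and that the normalizations (the signs, the shift $b\mapsto b-1$, and the base term $c_{-b}$) are chosen so that the two constructions agree on the nose.
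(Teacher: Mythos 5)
The paper gives no proof of this theorem at all: it is explicitly recalled as a special case of \cite[Proposition 5.1, Proposition 5.7]{jedlicka_pilitowska}, and your proposal ultimately reduces to exactly that same citation, so it takes the same (indeed the only available) route. The additional bookkeeping you supply — identifying $\Ret(X)\cong\Z_m$, the dictionary $c_k-c_{k-1}=e_k$, the check that $\sim$ is a congruence via telescoping of $c_i-c_{i+m}\in H$, and the recovery of $H$ and $s$ as invariants — is correct and consistent with the cited construction.
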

We are now able to give a more explicit version of the above result. For an abelian group $(A,+)$ we define $\chi_0: A \to \Z$ as
\begin{align*}
            \chi_0(x) & = \begin{cases}
                1 & x = 0 \\
                0 & x \neq 0
            \end{cases}.
        \end{align*}
\begin{thm}\label{thm:jedlicka_improved}
    Let $X$ be an indecomposable cycle set of multipermutation level 2 such that $|\Ret(X)|=m$, then $X$ is isomorphic to a cycle set of the form $X=\Z_m\times A$ with the multiplication
    $$(a,x)\cdot (b,y)=(b+1,y+\chi_0(b)S+\Phi(b-a)),$$
    where $(A,+)$ is an abelian group and $\Phi:\Z_m\to A$ is a non-constant map such that $\Phi(0)=0$ and $S\in A$. Two such cycle sets, given by $(A,\Phi,S)$ and $(B,\Phi',S')$, are isomorphic if and only if there exists a group isomorphism $f:A\to B$ such that $\Phi'=f\Phi$ and $f(S)=S'$.
\end{thm}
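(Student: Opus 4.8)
The plan is to derive the explicit form in \cref{thm:jedlicka_improved} directly from the parametrization in \cref{thm:jedlicka}, rather than reproving it from scratch. So first I would start from a cycle set $X$ of multipermutation level $2$ with $|\Ret(X)|=m$, take the description $X=(\Z\times\bigoplus_{i\in\Z}\Z)/{\sim}$ given there, and show that after quotienting, the first coordinate lives in $\Z_m$ and the second coordinate lives in the abelian group $A=\left(\bigoplus_{i\in\Z}\Z\right)/H$. The key observation is that the equivalence relation $\sim$ identifies $(a,x)$ with $(a+m,x+s)$ (this is the case $b=a+m$, using $c_i-c_{i+m}\in H$), so a complete set of representatives is obtained by letting $a$ range over $\{0,\dots,m-1\}$ and $x$ range over $A$; the residual shift by $s$ when crossing the boundary is exactly what the term $\chi_0(b)S$ will encode.

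Next I would carry out the change of variables that puts the operation into the stated form. In \cref{thm:jedlicka} the product is $[(a,x)]\cdot[(b,y)]=[(b-1,y-c_{a-b}+c_{-b})]$, whereas the target formula has first coordinate $b+1$; so I expect to introduce a reindexing (for instance replacing the additive generators $e_i$ or reflecting/shifting the index) so that $b-1$ becomes $b+1$ and the combination $-c_{a-b}+c_{-b}$ collapses into a single function $\Phi(b-a)$ of the difference together with a correction term supported only when the representative wraps around. Concretely, I would define $\Phi:\Z_m\to A$ by reading off the image of $-c_{a-b}+c_{-b}$ under the quotient map $\bigoplus_{i\in\Z}\Z\to A$ as a function of $b-a\bmod m$, normalize so that $\Phi(0)=0$, and absorb the wrap-around discrepancy into $S\in A$ multiplied by $\chi_0(b)$. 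That $\Phi$ is non-constant will follow from $X$ having multipermutation level exactly $2$ rather than $1$: if $\Phi$ were constant the retraction would already be trivial, contradicting $|\Ret(X)|=m$ with the level being genuinely $2$.

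For the isomorphism classification I would argue in both directions. Given a group isomorphism $f:A\to B$ with $\Phi'=f\Phi$ and $f(S)=S'$, the map $(a,x)\mapsto(a,f(x))$ is checked to be a cycle set isomorphism by a direct substitution into the operation, which is routine since $f$ is additive and intertwines $\Phi$ with $\Phi'$ and $S$ with $S'$. For the converse, I would invoke the uniqueness clause of \cref{thm:jedlicka}, namely that different $(H,s)$ give non-isomorphic cycle sets: an isomorphism between the two explicit cycle sets must respect the retraction and hence descend to a correspondence between the underlying data $(H,s)$ and $(H',s')$, from which one extracts the required $f$. The main obstacle I anticipate is bookkeeping in the second step: tracking how the two generator families $c_{a-b}$ and $c_{-b}$ combine under the quotient and under the reindexing, and verifying that the wrap-around term is genuinely of the form $\chi_0(b)S$ with the \emph{same} $S$ independent of $a$. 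Checking that the fixed-point shift is compatible with the equivalence relation $\sim$ (so that the operation is well defined on representatives) is the delicate computation, and it is where the condition $c_i-c_{i+m}\in H$ is used essentially.
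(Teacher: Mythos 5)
Your proposal follows essentially the same route as the paper: identify the quotient with $\Z_m\times\bigl(\bigoplus_{i\in\Z}\Z\bigr)/H$, reparametrize $(H,s)$ by a homomorphism $\phi$ with $\Phi(i)=\phi(c_i)$ (using that the $c_i$, $i\neq 0$, form a basis), and invoke the uniqueness clause of \cref{thm:jedlicka} for the isomorphism classification. The one point your sketch underestimates is the residual term $\Phi(-b)$, which is present for \emph{every} $b$ and not only at the wrap-around, so a mere reindexing of the first coordinate cannot remove it; the paper eliminates it by conjugating with $\theta(a,x)=(-a,-x+g(a))$ where $g(a)=\sum_{i=0}^{a-1}\Phi(i)$ telescopes against $\Phi(b)$, and it is this substitution that simultaneously produces the constant $S=s-\sum_{i=0}^{m-1}\Phi(i)$ appearing in $\chi_0(b)S$.
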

\begin{proof}
    Note that the elements $\{(a,x)\mid 1\leq a\leq m, x\in A\}$ form a system of representatives for the relation ${\sim}$. We define the map $$\psi: \left(\Z\times 
    \bigoplus_{i\in \Z}\Z \right)/{\sim}\to \Z_m\times \left(\bigoplus_{i\in \Z}\Z \right)/H,$$
    as $\psi([(a,x)])=(a,x)$ for $1\leq a\leq m$. In particular, 
    $$\psi([(0,x)])=\psi([(m,x+s)])=(m,x+s).$$
    Under this identification, we find that the cycle set as given in \cref{thm:jedlicka} is isomorphic to the cycle set on $\Z_m\times (\bigoplus_{i\in \Z}\Z)/H$ given by 
    $$(a,x)\cdot (b,y)=(b-1,y-\chi_0(b)s-c_{a-b}+c_{-b}).$$
    Now instead of starting from $H\leq \bigoplus_{i\in \Z}\Z$ and $s\in \bigoplus_{i\in \Z}\Z/H$ we can also start with an abelian group $A$, $s\in A$ and a group homomorphism $\phi:\bigoplus_{i\in \Z}\Z\to A$; we then set $H=\ker \phi$. As the $c_i$, $i\neq 0$, form a basis of $\bigoplus_{i\in \Z}\Z$, we can freely choose the images $\phi(c_i)\in A$ as long as $\phi(c_i)=\phi(c_{i+m})$. If we denote $\phi(c_i)=\Phi(i)$ we see that every such $\phi$ uniquely corresponds to a map $\Phi:\Z_m\to A$ such that $\Phi(0)=0$. Using $\phi$ to identify $\bigoplus_{i\in \Z}\Z/H$ and $A$ we find a cycle set structure on $\Z_m\times A $ given by
    $$(a,x)\cdot (b,y)=(b-1,y-\chi_0(b)s-\Phi(a-b)+\Phi(-b)).$$
    Recall that different choices of $H$ and $s$ give non-isomorphic cycle sets. It is clear that for two abelian groups $A,B$, $s\in A$, $s'\in B$, and maps $\Phi:\Z_m\to A$ and $\Phi':\Z_m\to B$, the associated homomorphism $\phi$ and $\phi'$ have the same kernel $H$ if and only if there exists a group isomorphism $f:A\to B$ such that $\phi'=f\phi$, or equivalently $\Phi'=f\Phi$. Moreover, $s$ and $s'$ correspond to the same element in $\bigoplus_{i\in \Z}\Z/H$ precisely if $f(s)=s'$. 
    
    At last, define $g:\Z_m\to A$ as 
    $g(b)=\sum_{i=0}^{b-1} \Phi(i)$ for $1\leq b\leq m$.
   If $b\neq 0$, then $g(b+1)-g(b)=\Phi(b)$ and if $b=0$ then $g(b+1)-g(b)=-\sum_{i=1}^{m-1}\Phi(i)$.
   Under the permutation $\theta:(a,x)\mapsto (-a,-x+g(a))$ the cycle set structure now becomes
    \begin{align*}
        \theta^{-1}(\theta(a,x)\cdot \theta(b,y))&=\theta^{-1}((-a,-x+g(a))\cdot (-b,-y+g(b)))\\
        &=\theta^{-1}(-b-1,-y+g(b)-\chi_0(b)s-\Phi(b-a)+\Phi(b))\\
        &=(b+1,y-g(b)+\chi_0(b)s+\Phi(b-a)-\Phi(b)+g(b+1))\\
        &=(b+1,y+\chi_0(b)S+\Phi(b-a)),
    \end{align*}
    where $S=s-\sum_{i=0}^{m-1}\Phi(i)$. To conclude the proof, note that if we are given an abelian group $B$, an isomorphism $f:A\to B$ and we set $\Phi'=f\Phi$, then $f(s)=s'$ if and only if $f(S)=f(s-\sum_{i=0}^{m-1}\Phi(i))=s'-\sum_{i=1}^{m-1}\Phi'(i)=S'$.
\end{proof}
\begin{rem}
    Note that the solutions of the YBE on $X = \Z_p \times A$ corresponding to the cycle sets in \cref{thm:jedlicka_improved} are given by
        \[
        r\begin{pmatrix}
            (a,x) \\ (b,y)
        \end{pmatrix} = \begin{pmatrix}
             ( b-1,y- \chi_0(b-1)S - \Phi(b-1-a))\\
             (a+1, x + \chi_0(a)S + \Phi(a-b+1))
        \end{pmatrix}.
        \]
\end{rem}
\begin{cor} \label{cor:classification_p2_multipermutation}
    Let $X$ be a retractable indecomposable cycle set of size $p^2$, $p$ a prime, then $X$ has finite multipermutation level and is isomorphic to
    one of the following.
    \begin{enumerate}
        \item $X=\Z_{p^2}$ with $x\cdot y=y+1$.
        \item $X=\Z_p\times \Z_p$, with 
        $$(a,x)\cdot (b,y)=(b+1,y+\chi_0(b)S+\Phi(b-a)),$$
        where $\Phi:\Z_p\to \Z_p$ is a non-constant map such that $\Phi(0)=0$ and $S\in \Z_p$. The parameters $S,\Phi$ and $S',\Phi'$ define isomorphic cycle sets if and only if $S'=\alpha S$ and $\Phi'=\alpha \Phi$ for some $\alpha \in \Z_p^*$.
    \end{enumerate}
\end{cor}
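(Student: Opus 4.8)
The plan is to use the retraction map to split into two cases according to the size of $\Ret(X)$, and then to quote the classification of multipermutation level $2$ cycle sets already obtained in \cref{thm:jedlicka_improved}. Since $X$ is retractable, the canonical map $\pi\colon X\to\Ret(X)$ is not injective, so $|\Ret(X)|<p^2$. By \cref{pro:fibres_have_same_cardinality}, $\Ret(X)$ is again indecomposable and $|\Ret(X)|$ divides $|X|=p^2$, leaving the two possibilities $|\Ret(X)|\in\{1,p\}$, which I would treat separately. In both cases $X$ turns out to have multipermutation level $1$ or $2$, giving in particular the finiteness of the multipermutation level asserted in the statement.

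First I would dispose of the case $|\Ret(X)|=1$, i.e. multipermutation level $1$. Here all the maps $\sigma_x$ coincide with a single permutation $\sigma$, so $x\cdot y=\sigma(y)$ for all $x,y$. Indecomposability forces $\G(X)=\langle\sigma\rangle$ to act transitively on $X$, whence $\sigma$ is a single $p^2$-cycle; relabelling $X$ by $\Z_{p^2}$ so that $\sigma$ becomes $y\mapsto y+1$ yields exactly case (1).

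For the remaining case $|\Ret(X)|=p$, the retraction $\Ret(X)$ is an indecomposable cycle set of size $p$, hence isomorphic to $\Z_p$ with $x\cdot y=y+1$ by \cite[Theorem 2.13]{ESS_YangBaxter}, which has multipermutation level $1$. Therefore $X$ has multipermutation level exactly $2$, and \cref{thm:jedlicka_improved} applies with $m=p$: it presents $X$ as $\Z_p\times A$ for an abelian group $A$, a non-constant map $\Phi\colon\Z_p\to A$ with $\Phi(0)=0$, and an element $S\in A$, with the stated multiplication. Comparing cardinalities gives $|A|=p^2/p=p$, so $A\cong\Z_p$, which is precisely the shape of case (2). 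To obtain the isomorphism criterion I would specialise the one in \cref{thm:jedlicka_improved} to $A=B=\Z_p$: since every group automorphism of $\Z_p$ has the form $y\mapsto\alpha y$ for a unique $\alpha\in\Z_p^*$, the conditions $\Phi'=f\Phi$ and $f(S)=S'$ translate into $\Phi'=\alpha\Phi$ and $S'=\alpha S$, as claimed.

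The argument is essentially bookkeeping once the inputs are assembled, so I do not expect a genuine obstacle; the two points that require a little care are pinning down the multipermutation level $1$ case as $\Z_{p^2}$ purely from indecomposability of $\langle\sigma\rangle$, and verifying that the hypotheses of \cref{thm:jedlicka_improved} (in particular that the multipermutation level is \emph{exactly} $2$, not $1$) are met before invoking it, which is exactly what distinguishes the $|\Ret(X)|=p$ branch from the $|\Ret(X)|=1$ branch.
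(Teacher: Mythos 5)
Your proposal is correct and follows essentially the same route as the paper: use \cref{pro:fibres_have_same_cardinality} together with retractability to get $|\Ret(X)|\in\{1,p\}$, identify the first case with $\Z_{p^2}$ and the second as multipermutation level $2$ via \cite[Theorem 2.13]{ESS_YangBaxter}, then invoke \cref{thm:jedlicka_improved} with $m=p$ and $A\cong\Z_p$. The extra details you supply (the $p^2$-cycle argument and the specialisation of the isomorphism criterion to $f(y)=\alpha y$) are exactly the steps the paper leaves implicit.
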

\begin{proof}
    It follows from \cref{pro:fibres_have_same_cardinality} that $|\Ret(X)|\in \{1,p\}$. If $|\Ret(X)|=1$ then $X$ clearly has finite multipermutation level and is isomorphic to the given cyclic cycle set on $\Z_{p^2}$. If $|\Ret(X)|=p$, then we know that $\Ret(X)$ is isomorphic to the cycle set on $\Z_p$ with $x\cdot y=y+1$ \cite[Theorem 2.13]{ESS_YangBaxter}. In particular, $|\Ret(\Ret(X))|=1$ and thus $X$ has multipermutation level 2. The statement now follows directly from \cref{thm:jedlicka_improved}.
\end{proof}

\section{Indecomposable irretractable cycle sets of size \texorpdfstring{$p^2$}{p2} - The \texorpdfstring{$p$}{p}-group case} \label{sec:irretractable_cycle_sets_sylow}
The aim of this section is to 
find all isomorphism classes of indecomposable irretractable cycle sets $(X,\cdot)$
of size $p^2$ such that $\Gbr(X)$ is a $p$-group, for $p$ a prime number.

We first construct these cycle sets in \cref{subsec:construction}. In \cref{subsec:redundancy} we then study when such cycle sets are isomorphic and moreover we determine their automorphism groups.

\subsection{Constructing the solutions}\label{subsec:construction}
We first introduce the following result.
\begin{pro} \label{prop:Gmult_not_abelian}
Let $X$ be an irretractable cycle set such that $\G(X)$ is a $p$-brace, then $\Soc(\G(X))=0$ and $\G(X)^\circ$ is not abelian.
\end{pro}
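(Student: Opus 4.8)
The statement splits into two parts, and the second follows immediately from the first together with \cref{lem:nontrivial_fix}: if $\G(X)^\circ$ were abelian, that lemma (applied to the $p$-brace $\G(X)$) would force $\Soc(\G(X)) \neq 0$, contradicting the vanishing of the socle. So the whole proposition reduces to proving $\Soc(\G(X)) = 0$, and this is where irretractability must enter.

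The plan for $\Soc(\G(X)) = 0$ is to exploit that irretractability makes $X$ isomorphic to its own retraction and then to compare permutation groups. Write $A = \G(X)$ and $Y = \Ret(X) = \pi(X) \subseteq A$, equipped with the cycle set structure inherited from $(A,\cdot)$. The map $\pi : X \to Y$ is a surjective cycle set homomorphism which, by irretractability, is injective, hence an isomorphism; since the permutation group is an isomorphism invariant of a cycle set, this gives $\G(Y) \cong \G(X) = A$, and in particular $|\G(Y)| = |A|$. On the other hand, $Y$ is a \emph{cycle base} of $A$: it generates $A^\circ$ by the very definition of $\G(X)$, and it generates $A^+$ because it is the image of the cycle base $\iota(X)$ of $G(X)$ under the surjective brace homomorphism $G(X) \twoheadrightarrow A$, and surjective homomorphisms carry generating sets to generating sets.

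The key computation is then to identify $\G(Y)$ with $A^\circ/\Soc(A)$. Since the cycle set operation on $A$ is $a \cdot b = \lambda_a^{-1}(b)$, the generators of $\G(Y)$ are the restrictions $\lambda_y^{-1}|_Y$ for $y \in Y$. As $Y$ generates $A^\circ$ and $\lambda$ is a homomorphism, $\langle \lambda_y : y \in Y\rangle = \lambda(A^\circ)$ inside $\Aut(A^+)$; and as $Y$ generates $A^+$, an automorphism of $A^+$ is determined by its restriction to $Y$, so restriction to $Y$ embeds $\lambda(A^\circ)$ faithfully into $\Sym_Y$. Hence $\G(Y) \cong \lambda(A^\circ) = A^\circ/\ker\lambda = A^\circ/\Soc(A)$. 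Comparing this with $|\G(Y)| = |A|$ yields $|\Soc(A)| = 1$, as wanted.

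The main obstacle I anticipate is the generation-and-faithfulness bookkeeping in this last step: making precise that $Y$ being a cycle base (and not merely a sub-cycle set) is exactly what forces the $\lambda$-action restricted to $Y$ to have kernel $\Soc(A)$, and checking that the identifications ($\pi$ an isomorphism, $\G(Y)\cong\G(X)$, and the restriction map on automorphisms) are all mutually compatible. Once that is settled, the cardinality count and the concluding appeal to \cref{lem:nontrivial_fix} are routine, and notably the argument uses irretractability only, not indecomposability of $X$.
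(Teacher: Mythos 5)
Your proposal is correct, and its overall architecture coincides with the paper's: both reduce the proposition to showing $\Soc(\G(X))=0$ and then obtain non-abelianness of $\G(X)^\circ$ from \cref{lem:nontrivial_fix}. The difference lies entirely in the first half: the paper disposes of $\Soc(\G(X))=0$ by citing an external result (Lemma~2.1 of the reference \emph{BCJO\_irretrsqfree}), whereas you prove it from scratch. Your self-contained argument is sound: irretractability makes $\pi\colon X\to\Ret(X)=Y$ a cycle set isomorphism, so $|\G(Y)|=|\G(X)|=|A|$ with $A=\G(X)$; on the other hand $Y$ is a cycle base of $A$ (it generates $A^\circ$ by construction and $A^+$ as the image of the cycle base of $G(X)$ under the surjective brace morphism $G(X)\twoheadrightarrow A$), and a cycle base forces $\G(Y)\cong\lambda(A^\circ)\cong A^\circ/\Soc(A)$, whence $|\Soc(A)|=1$. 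The one piece of bookkeeping you flag but do not fully write out --- that every $\lambda_a$, $a\in A^\circ$, stabilizes $Y$ so that restriction to $Y$ is defined on all of $\lambda(A^\circ)$, and that this restriction is faithful because $Y$ generates $A^+$ --- is easily completed: each $\lambda_y^{-1}$ with $y\in Y$ maps $Y$ into $Y$, hence onto $Y$ by finiteness, and $Y$ generates $A^\circ$. What your route buys is independence from the external citation (and it makes transparent that neither indecomposability nor the $p$-group hypothesis is needed for the socle statement); what the paper's route buys is brevity. Both are valid.
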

\begin{proof}
    As $X$ is irretractable, it follows from \cite[Lemma 2.1]{BCJO_irretrsqfree} that $\Soc(\G(X))=0$. From \cref{lem:nontrivial_fix} we then find that $\G(X)^\circ$ is not abelian.
\end{proof}

In the remainder of the section, we let $X$ be an indecomposable irretractable cycle set of size $p^2$ and assume that $\G=\G(X)$ is a $p$-group. We associate $X$ with its image in $\G$, which is a transitive cycle base of $\G$.

By \cref{lem:lucchini} and \cref{prop:Gmult_not_abelian} we have a unique system of imprimitivity, for $x\in X$ we denote by $\mathcal{B}_x$ the block containing $x$. We denote by $\mathcal{A}=\G\cap \Z_p^p$ the abelian subgroup of $\Gmult$ which fixes the blocks setwise. Note that $[\G^\circ :\mathcal{A}]=p$.

\begin{pro}
$\Fix(\Gbr) \cap \mathcal{A} = 0$.
\end{pro}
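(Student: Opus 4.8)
The plan is to prove the stronger-looking but equivalent statement that every $f \in \Fix(\G) \cap \mathcal{A}$ is central in $\G^\circ$. This already forces $f = 0$: by \cref{prop:Gmult_not_abelian} we have $\Soc(\G) = 0$, so as soon as $f \in \Fix(\G) \cap Z(\G^\circ)$ is known, \cref{lem:fixed_points_are_not_central} yields $f \in \Soc(\G) = 0$. Thus the whole statement reduces to showing that a $\lambda$-fixed element supported in $\mathcal{A}$ must be central, equivalently that no \emph{non-central} element of $\mathcal{A}$ is $\lambda$-fixed.

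For the centrality, I would first make the condition concrete. Since $\mathcal{A}$ is abelian and $[\G^\circ : \mathcal{A}] = p$, we may write $\G^\circ = \langle \mathcal{A}, t\rangle$ for any $t \in \G \setminus \mathcal{A}$, so an element $f \in \mathcal{A}$ is central precisely when it commutes with $t$. Realising $\G$ as a subgroup of $\Z_p \wr \Z_p$ via \cref{pro:impritive_solvable_action_on_Z2} and writing $f \colon (a,x) \mapsto (a, x + \gamma_a)$ and $t \colon (a,x) \mapsto (a+\beta, x + \delta_a)$ with $\beta \neq 0$, a direct computation of the two composites shows that $f$ commutes with $t$ if and only if $\gamma_a = \gamma_{a+\beta}$ for all $a$; as $\beta$ generates $\Z_p$, this is exactly the statement that $(\gamma_a)_a$ is constant. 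Hence it remains to prove that $f \in \Fix(\G)$ forces the block-translations $\gamma_a$ to be independent of the block.

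I expect this last step to be the main obstacle. Membership in $\Fix(\G)$ is the relation $b \circ f = b + f$ for all $b \in \G$ (equivalently $\lambda_b(f) = f$), and it should suffice to exploit it for $b = t$ and for the generators $b = \sigma_x^{-1}$, using that the $\lambda$-action on the transitive cycle base satisfies $\lambda_{\sigma_x^{-1}}(\sigma_y^{-1}) = \sigma_{\sigma_x^{-1}(y)}^{-1}$, which one reads off from the addition formula $\sigma_x^{-1} + \sigma_y^{-1} = \sigma_x^{-1} \circ \sigma_{\sigma_x(y)}^{-1}$. The difficulty is that the coordinates $\gamma_a$ are visible only in the multiplicative (wreath) description of $\G^\circ$, whereas $\Fix(\G)$ is governed by the additive group $\G^+$; bridging the two requires controlling $\G^+$ well enough, via the same addition formula, and then propagating the constraint from one block to all blocks using transitivity of the $\G$-action, i.e.\ indecomposability of $X$ (it may be cleanest here to first check that $\mathcal{A}$ is a left ideal, so that $\lambda_t$ acts on $\mathcal{A}$ by the block shift and its fixed vectors are automatically constant). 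Once the additive structure is pinned down, the relation $t \circ f = t + f$ should force the $\gamma_a$ to coincide, which completes the reduction and hence the proof.
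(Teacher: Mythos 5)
There is a genuine gap. Your reduction is fine as far as it goes: showing that every $f \in \Fix(\Gbr) \cap \mathcal{A}$ is central does imply $f \in \Soc(\Gbr) = 0$ by \cref{lem:fixed_points_are_not_central} and \cref{prop:Gmult_not_abelian}, and the wreath-coordinate computation that centrality of $f \in \mathcal{A}$ is equivalent to constancy of the tuple $(\gamma_a)_a$ is correct. But the entire mathematical content of the proposition is concentrated in the one step you leave open, namely deducing constancy of the $\gamma_a$ from $\lambda_b(f) = f$, and your sketch of that step does not work. The obstruction you yourself name --- that $\Fix(\Gbr)$ is an additive condition while the $\gamma_a$ are multiplicative (wreath) coordinates --- is precisely the obstruction, and nothing in the proposal bridges it. Concretely: (i) $\mathcal{A}$ need not be closed under $+$ (this is exactly why the paper later has to introduce the corrected addition $\oplus$ to make $\tilde{\mathcal{A}}$ a brace), so ``first check that $\mathcal{A}$ is a left ideal'' is not available, and the phrase ``$\lambda_t$ acts on $\mathcal{A}$ by the block shift'' has no additive meaning in these coordinates; (ii) the identity $\lambda_{\sigma_x^{-1}}(\sigma_y^{-1}) = \sigma_{\sigma_x^{-1}(y)}^{-1}$ only computes the $\lambda$-action on elements of the cycle base $X$, whereas $f$ is in general not an element of $X$, so it gives no handle on $\lambda_b(f)$.

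It is also worth noting that the paper's argument runs in the opposite direction to yours: it assumes $0 \neq f \in \Fix(\Gbr)\cap\mathcal{A}$ and first deduces that $f$ is \emph{not} central (again from \cref{lem:fixed_points_are_not_central} and $\Soc(\Gbr)=0$), so that $\mathcal{A}$ is exactly the centralizer of $f$, hence a subbrace of index $p$ and, by \cref{pro:maximal_subbraces_of_p_braces_are_ideals}, an ideal. The triviality of the order-$p$ quotient $\Gbr/\mathcal{A}$ then forces $X$ into a single additive coset $g+\mathcal{A}$, whence $f * x$ equals a fixed $\gamma \neq 0$ for all $x \in X$, and the ideal generated by $\mathcal{A} * \Gbr$ turns out to be $\{0,\gamma,\dots,(p-1)\gamma\} \subseteq \Fix(\Gbr)\cap Z(\Gbr^\circ)$, contradicting $\Soc(\Gbr)=0$. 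So the structural fact the paper exploits (the centralizer of a putative nonzero $f$ being precisely $\mathcal{A}$) is the negation of the centrality you hope to verify by direct computation; there is no indication that the coordinate calculation you defer can be carried out without an argument of comparable depth.
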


\begin{proof}
Suppose that $0 \neq f \in \Fix(\Gbr) \cap \mathcal{A}$, without loss of generality we may assume that $f^{\circ p}=0$.
By \cref{lem:lambda_of_fix_is_conjugation} and the abelianity of $\mathcal{A}$,
we see that $\lambda_f(g) = {}^fg = g$, for all $g \in \mathcal{A}$. 
Thus $\lambda_f$ fixes all elements in $\mathcal{A}$. 
Moreover, by \cref{lem:fixed_points_are_not_central}, $ \Fix(\Gbr) \cap Z(\Gmult)\subseteq \Soc(\Gmult)=0$, so $f \not\in Z(\Gmult)$.
Hence, knowing also that $|\Gbr|/|\mathcal{A}| = p$, we see that $\mathcal{A} = \{ g \in \Gbr\mid {}^fg = g \}$.
As $\lambda_f$ is an automorphism, $\mathcal{A}$ is a subbrace of index $p$ in $\Gbr$. By \cref{pro:maximal_subbraces_of_p_braces_are_ideals}, $\mathcal{A}$ is an ideal of $\Gbr$.

As $\Gbr/\mathcal{A}$ is a brace of order $p$, it must be trivial and the canonical map $\Gbr\twoheadrightarrow \Gbr/\mathcal{A}$ maps the transitive cycle base $X$ to a single element. Hence there exists some $g\in \Gbr$ such that for all $x\in X$ there exists some $a_x\in \mathcal{A}$ such that $x=g+a_x$. In particular, if we set $\gamma=f*g\in \mathcal{A}$, we find that also $f*x=f*g+f*a_x=\gamma$.
Hence inductively for all $n\in \N$ we find $f^{\circ n}*x=n\gamma$ since
\begin{align*}
f^{\circ n}*x&
=\lambda_{f^{\circ n}}(x)-x
=\lambda_f(\lambda_{f^{\circ n-1}}(x))-\lambda_f(x)+\lambda_f(x)-x\\
&=\lambda_f(f^{\circ n-1}*x)+f*x
=\lambda_f((n-1)\gamma)+\gamma=n\gamma.  
\end{align*}
In particular, this implies that $p\gamma=0$.
If $\gamma=0$, then $\lambda_f(x)=x$, for all $x\in X$ and therefore $\lambda_f=\id_\Gbr$, which would imply that $f\in \Soc(\Gbr)$. It therefore follows that $\gamma\neq 0$, which in turn implies that $\lambda_f$ has no fixed points on $X$. In particular, for each $a\in \mathcal{A}$ and $x\in X$, there exists some $n\in \N$ such that $\lambda_a(x)=\lambda_f^n(x)$, hence $a*x=f^{\circ n}*x=n\gamma$.

Now consider $I$ be the subgroup of $\Gadd$ generated by $\{ a*g\mid a\in \mathcal{A},g\in \G\}$, which is an ideal by
\cite[Corollary after Proposition 6]{Rump_braces}. Because $X$ generates $\Gadd$ and $*$ is left distributive, $I$ is the subgroup of $\Gadd$ generated by $\{a*x\mid a\in \mathcal{A},x\in X\}$. By the previous discussion, $I=\{0,\gamma,\ldots,(p-1)\gamma\}$. As $|I|=p$, we find that $I\subseteq \Fix(\Gbr)$. Since $I$ is a minimal normal subgroup of the nilpotent group $\Gbr^\circ$, we also find that $I\subseteq Z(\Gbr^\circ)$, hence \cref{lem:fixed_points_are_not_central} yields a contradiction with the assumption that $\Soc(\Gbr)=0$.
\end{proof}

Now that we know that $\Fix(\Gbr) \cap \mathcal{A} = 0$, it follows from 
\cref{lem:nontrivial_fix} that $|\Fix(\Gbr)| = p$. Also, we have a semidirect product $\Gmult = \mathcal{A} \rtimes \Fix(\Gbr)^\circ$. We use this for the following construction: for all $g \in \Gbr$, we have an equality $g\circ \Fix(\Gbr) = g + \lambda_{g}(\Fix(\Gbr)) = g+ \Fix(\Gbr)$. Therefore, $\mathcal{A}$ forms a system of representatives for $\Gmult / \Fix(\Gbr)^{\circ}$ and $\Gadd / \Fix(\Gbr)^+$. We keep the multiplication on $\mathcal{A}$ as it is, but as $\mathcal{A}$ is not necessarily closed under $+$, we define $g \oplus h$ as the unique element in $\mathcal{A} \cap (g + h + \Fix(\Gbr))$.

\begin{pro}
The structure $(\mathcal{A},\oplus,\circ)$ is a brace.
\end{pro}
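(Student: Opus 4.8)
The plan is to verify the brace axioms directly, exploiting that both operations on $\mathcal{A}$ are controlled by the brace structure of $\Gbr$ modulo $N:=\Fix(\Gbr)$. First I would record that $N$ is a subgroup of $\Gadd$: by definition it is the set of common fixed points of the additive automorphisms $\lambda_b$, $b\in\Gbr$, hence an intersection of subgroups of $\Gadd$. Consequently $g+h+N$ is a genuine additive coset of $N$, and since $\mathcal{A}$ is a transversal for $\Gadd/N$ it meets this coset in exactly one point; this is what makes $\oplus$ well defined. The group $(\mathcal{A},\circ)$ needs no argument, being by construction a subgroup of $\Gmult$ (indeed abelian).

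Next I would identify $(\mathcal{A},\oplus)$ with the additive quotient. Let $q\colon\mathcal{A}\to\Gadd/N$ send $a$ to $a+N$. By the transversal property $q$ is a bijection, and by the very definition of $\oplus$ one has $q(a\oplus b)=(a\oplus b)+N=(a+b)+N=q(a)+q(b)$, so $q$ is a group isomorphism and $(\mathcal{A},\oplus)$ is an abelian group. This also pins down the right-hand side of the distributivity axiom: for $a,b,c\in\mathcal{A}$ the element $(a\circ b)\ominus a\oplus(a\circ c)$, where $\ominus$ is subtraction for $\oplus$, is precisely the unique point of $\mathcal{A}$ lying in the additive coset $\bigl((a\circ b)-a+(a\circ c)\bigr)+N$.

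It then remains to check the skew left distributivity $a\circ(b\oplus c)=(a\circ b)\ominus a\oplus(a\circ c)$. Writing $b\oplus c=(b+c)+f$ with $f\in N$, I would expand using the skew distributivity of $\Gbr$ together with the defining identity $a\circ f=a+f$ valid for $f\in\Fix(\Gbr)$:
\begin{align*}
a\circ(b\oplus c)
&= a\circ\bigl((b+c)+f\bigr)
  = a\circ(b+c)-a+a\circ f\\
&= a\circ(b+c)-a+(a+f)
  = a\circ(b+c)+f\\
&= (a\circ b-a+a\circ c)+f.
\end{align*}
Thus $a\circ(b\oplus c)$ lies in the coset $\bigl(a\circ b-a+a\circ c\bigr)+N$ and, being a $\circ$-product of elements of $\mathcal{A}$, belongs to $\mathcal{A}$. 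By the previous paragraph the right-hand side is the unique element of $\mathcal{A}$ in that same coset, so the two agree, which establishes the axiom.

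The one genuine subtlety is that $\mathcal{A}$ is not closed under $+$, so $\oplus$ is a twisted addition whose correction term $f$ records the failure of $a+b$ to lie in $\mathcal{A}$; I expect this to be the main obstacle. The resolution, and the reason the construction works, is that this correction always lands in $\Fix(\Gbr)$, where $\circ$ and $+$ coincide, so $f$ passes unharmed through the distributivity computation via $a\circ f=a+f$. It is worth stressing that the argument never requires $N^\circ=\Fix(\Gbr)^\circ$ to be normal in $\Gmult$; it uses only that $\Fix(\Gbr)$ is an additive subgroup and that $\mathcal{A}$ is a common transversal for the additive and multiplicative cosets.
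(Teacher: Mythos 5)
Your proof is correct and follows essentially the same route as the paper's: both verify the skew left distributivity by reducing modulo $\Fix(\Gbr)$, using the skew distributivity of $\Gbr$ together with the identity $a\circ f=a+f$ for $f\in\Fix(\Gbr)$, and concluding by uniqueness of representatives in $\mathcal{A}$. The only difference is presentational — you track the explicit correction term $f$ element-wise and use closure of $\mathcal{A}$ under $\circ$, where the paper carries out the same computation directly with the cosets.
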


\begin{proof}
For $a,b,c \in \mathcal{A}$, we calculate
\begin{align*}
    \{a\circ b\ominus a\oplus a\circ c\}&
=\mathcal{A} \cap (a\circ b-a+a\circ c + \Fix(\Gbr))
=\mathcal{A} \cap (a\circ (b + c) + \Fix(\Gbr))\\
&=\mathcal{A} \cap \big((a+ \Fix(\Gbr)) \circ (b+c + \Fix(\Gbr))\big)\\
&=\big(\mathcal{A} \cap (a+ \Fix(\Gbr)\big) \circ \big(\mathcal{A} \cap (b+c + \Fix(\Gbr)\big)\\
&=\big(\mathcal{A} \cap (a+ \Fix(\Gbr)\big) \circ \{b\oplus c\}
=\{a\circ(b\oplus c)\}.
\end{align*}
Hence $a\circ(b\oplus c)=a\circ b\ominus a\oplus a\circ c$.
\end{proof}
Denote by $\Tilde{\mathcal{A}}$ the thus constructed brace on $\mathcal{A}$. We define the \emph{transition} map
\begin{align*}
    \tau: \Gbr & \twoheadrightarrow \Tilde{\mathcal{A}} \\
    g & \mapsto [g], \textnormal{ where } \{ [g] \} = \mathcal{A} \cap (g\circ \Fix(\Gbr)).
\end{align*}
Note that $\tau$ is a group homomorphism $\Gbr^+ \to \Tilde{\mathcal{A}}^\oplus$ but this is not necessarily true for $\Gbr^\circ \to \Tilde{\mathcal{A}}^\circ$. With this notation, we also see that $g \oplus h = [g + h]$. Furthermore, on $\Tilde{\mathcal{A}}$, the $\lambda$-action changes to $\Tilde{\lambda}_g(h) = [\lambda_g(h)]$ which implies that the image $[X] \subseteq \Tilde{\mathcal{A}}$ is invariant under its $\Tilde{\lambda}$-action (which is not necessarily transitive). Therefore, it is again a cycle set under the operation $[x] \Tilde{\cdot} [y] = \Tilde{\lambda}^{-1}_{[x]}([y])$. Denote this cycle set by $\Tilde{X}$.

\begin{pro} \label{pro:fibres_have_size_p}
    $|\Tilde{X}| = p$.
\end{pro}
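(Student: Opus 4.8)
The plan is to exploit the surjective cycle set homomorphism $\tau|_X \colon X \twoheadrightarrow \Tilde{X}$ coming from the construction of $\Tilde{X}$ and to pin down the common size of its fibres. First I would invoke \cref{pro:fibres_have_same_cardinality}: since $X$ is indecomposable, $\Tilde{X}$ is indecomposable as well, all fibres $\tau|_X^{-1}([x])$ share a common cardinality, and $|\Tilde{X}|$ divides $|X| = p^2$; thus $|\Tilde{X}| \in \{1, p, p^2\}$. The crucial upper bound on a single fibre is immediate from the definition of $\tau$: the fibre over $[x]$ is $X \cap (x\circ \Fix(\Gbr))$, which sits inside a single coset of $\Fix(\Gbr)$ and hence has at most $|\Fix(\Gbr)| = p$ elements. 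Since the common fibre size equals $p^2/|\Tilde{X}|$, this forces $|\Tilde{X}| \geq p$, so only $|\Tilde{X}| = p$ and $|\Tilde{X}| = p^2$ remain.

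The main obstacle is therefore to exclude $|\Tilde{X}| = p^2$, and I would do this by a counting argument on permutation braces. If $|\Tilde{X}| = p^2$ then $\tau|_X$ is a bijective cycle set homomorphism between finite cycle sets, hence an isomorphism $X \cong \Tilde{X}$; consequently their permutation braces are isomorphic and $|\Gbr(\Tilde{X})| = |\Gbr|$. On the other hand I claim that $|\Gbr(\Tilde{X})| \leq |\Tilde{\mathcal{A}}| = |\mathcal{A}| = |\Gbr|/p$, which yields the contradiction $|\Gbr| \leq |\Gbr|/p$. Combined with the previous paragraph this leaves $|\Tilde{X}| = p$.

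It remains to justify the inequality $|\Gbr(\Tilde{X})| \leq |\Tilde{\mathcal{A}}|$, which is the technical heart. Let $B = \langle \Tilde{X}\rangle \subseteq \Tilde{\mathcal{A}}$ be the subbrace generated by $\Tilde{X}$; by construction $\Tilde{X}$ is a cycle base of $B$, and $|B| \leq |\Tilde{\mathcal{A}}|$. For a cycle base $Y$ of an arbitrary brace, the restriction of the $\lambda$-action gives a group homomorphism $B^\circ \to \Sym_Y$, $g \mapsto \lambda_g|_Y$ (each $Y$ is $\lambda_y$-invariant, since by the bijectivity axiom for cycle sets $\sigma_y = \lambda_y^{-1}|_Y$ is a permutation of $Y$), whose image is $\langle \lambda_y|_Y \mid y \in Y\rangle = \Gbr(Y)$; hence $|\Gbr(Y)| \leq |B^\circ| = |B|$. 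Applying this with $Y = \Tilde{X}$ gives $|\Gbr(\Tilde{X})| \leq |B| \leq |\Tilde{\mathcal{A}}|$, as required. I expect the only genuinely delicate point to be this last inequality, and in particular the observation that $\Tilde{X}$ should be exploited only through the subbrace it generates: since $\tau$ is an additive but not necessarily multiplicative homomorphism, $\Tilde{X}$ generates $\Tilde{\mathcal{A}}^\oplus$ but a priori need not generate $\Tilde{\mathcal{A}}^\circ$, so passing to $B = \langle \Tilde{X}\rangle$ rather than arguing directly with $\Tilde{\mathcal{A}}$ is the safe move.
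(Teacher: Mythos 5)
You assume at the outset that $\tau|_X$ is a cycle set homomorphism from $(X,\cdot)$ to $(\Tilde{X},\Tilde{\cdot})$, and this is false. The paper explicitly warns that $\tau$ is a homomorphism $\Gbr^+\to\Tilde{\mathcal{A}}^\oplus$ but not $\Gbr^\circ\to\Tilde{\mathcal{A}}^\circ$, and the cycle set operation on $\Tilde{X}$ is built from $\Tilde{\lambda}_{[x]}$, i.e.\ from the $\lambda$-map of the representative $[x]=x\circ f_x$ with $f_x\in\Fix(\Gbr)$, not of $x$ itself. Unwinding the definitions, $\tau(x\cdot y)=\tau(x)\,\Tilde{\cdot}\,\tau(y)$ for all $y\in X$ forces $\lambda_{f_x}(y)-y\in\Fix(\Gbr)$ for all $y$; since $\lambda_{f_x}(y)={}^{f_x}y$ by \cref{lem:lambda_of_fix_is_conjugation}, and $\overline{y}\circ{}^{f_x}y$ is a commutator, hence lies in $[\Gbr^\circ,\Gbr^\circ]\subseteq\mathcal{A}$ while $\mathcal{A}\cap\Fix(\Gbr)=0$, this can only happen if $\lambda_{f_x}=\id$, i.e.\ $f_x=0$. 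As $X$ generates $\Gbr^\circ$ and $\mathcal{A}$ is a proper subgroup, some $x\in X$ has $f_x\neq 0$, so $\tau|_X$ is genuinely not a morphism of cycle sets.

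This undermines both places where you use it. The appeal to \cref{pro:fibres_have_same_cardinality} is repairable: the fibres do all have the same size because $\Gbr^\circ$ acts transitively on $\Tilde{X}$ via $g\cdot[x]=[\lambda_g(x)]$ and permutes the fibres, which is exactly the paper's argument for $|\Tilde{X}|\in\{p,p^2\}$. The decisive second step is not repairable along your lines: from injectivity of $\tau|_X$ you cannot conclude $X\cong\Tilde{X}$ as cycle sets, hence not $|\Gbr(\Tilde{X})|=|\Gbr|$, and the contradiction $|\Gbr|\leq|\Gbr|/p$ evaporates. Your inequality $|\Gbr(\Tilde{X})|\leq|\mathcal{A}|$ is correct (and the detour through the subbrace generated by $\Tilde{X}$ is not even needed, since every $\sigma_{[x]}$ lies in the image of $\Tilde{\mathcal{A}}^\circ\to\Sym_{\Tilde{X}}$), but on its own it yields nothing: $|\mathcal{A}|$ can be as large as $p^p$, far exceeding $p^2$. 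The paper instead closes the case $|\Tilde{X}|=p^2$ qualitatively: injectivity of $\tau|_X$ makes $\Tilde{X}$ an irretractable cycle set whose permutation group is abelian (being a subgroup of the image of $\mathcal{A}^\circ$), contradicting \cref{prop:Gmult_not_abelian}. Some argument of that kind, rather than a cardinality count transported along $\tau$, is required here.
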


\begin{proof}
Note that $\Gmult$ still acts transitively on $\Tilde{X}$ by $\lambda_g([x]) = [\lambda_g(x)]$ and that $\tau^{-1}([x]) = (x + \Fix(\Gbr)) \cap X$. In particular, $|\tau^{-1}([x])| \leq p$ and thus $|\Tilde{X}| \in \{p,p^2 \}$.

Suppose that $\tau:X\to \Tilde{X}$ is injective, then $\Gbr(\Tilde{X})^{\circ}$ is isomorphic to $\mathcal{A}$ and each element of $\Tilde{X}$ acts differently on $\Tilde{X}$ by the $\Tilde{\lambda}$-action, so $\Tilde{X}$ is irretractable. Hence, this contradicts \cref{prop:Gmult_not_abelian}. Therefore, $|\Tilde{X}| = p$.
\end{proof}

\begin{pro} \label{pro:blocks_are_cosets_of_fix}
    For all $x \in X$, we have the equality $\mathcal{B}_x = x + \Fix(\Gbr) = x\circ\Fix(\Gbr)$.
    In particular, each block intersects $\mathcal{A}$ in precisely one element.
\end{pro}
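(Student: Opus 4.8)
The plan is to identify the block partition $\{\mathcal{B}_x\}$ with the partition of $X$ into the fibres of the transition map $\tau$, using the uniqueness of the non-trivial system of imprimitivity furnished by \cref{lem:lucchini}. Since it has already been recorded that $x\circ\Fix(\Gbr)=x+\Fix(\Gbr)$ for every $x$, it is enough to establish the single equality $\mathcal{B}_x=x+\Fix(\Gbr)$.

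First I would observe that $\tau\colon X\to\Tilde{X}$ is equivariant for the $\lambda$-actions: from $\tau(\lambda_g(x))=[\lambda_g(x)]=\Tilde{\lambda}_g([x])$ we see that $\tau$ intertwines the transitive $\Gbr$-action on $X$ with the $\Gbr$-action on $\Tilde{X}$. Consequently $\Gbr$ permutes the fibres of $\tau$, so these fibres form a system of imprimitivity for the action of $\Gbr$ on $X$. Next I would pin down their size: by \cref{pro:fibres_have_size_p} we have $|\Tilde{X}|=p$, and the computation in its proof gives $\tau^{-1}([x])=(x+\Fix(\Gbr))\cap X$, whence $|\tau^{-1}([x])|\leq|x+\Fix(\Gbr)|=p$. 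As the $p$ nonempty fibres partition the $p^2$-element set $X$, each must have size exactly $p$ (this also follows at once from \cref{pro:fibres_have_same_cardinality}); therefore $x+\Fix(\Gbr)\subseteq X$ and $\tau^{-1}([x])=x+\Fix(\Gbr)=x\circ\Fix(\Gbr)$. In particular the fibre system is non-trivial, since both its blocks and its index equal $p$.

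Finally I would invoke uniqueness. By \cref{prop:Gmult_not_abelian} the group $\Gbr^\circ$ is non-abelian, so \cref{lem:lucchini} guarantees that $\Gbr$ admits a unique non-trivial system of imprimitivity on $X$. The block system and the fibre system are both non-trivial, hence coincide; comparing the parts through $x$ yields $\mathcal{B}_x=\tau^{-1}([x])=x+\Fix(\Gbr)=x\circ\Fix(\Gbr)$. The concluding assertion is then formal: as $\mathcal{A}$ is a system of representatives for $\Gmult/\Fix(\Gbr)^\circ$, the left coset $x\circ\Fix(\Gbr)=\mathcal{B}_x$ meets $\mathcal{A}$ in exactly one element.

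I expect the only delicate point to be the bookkeeping ensuring that the fibre partition is genuinely a system of imprimitivity of the correct size; once both the block and fibre systems are seen to be non-trivial, their identification is forced by \cref{lem:lucchini}, and no further computation is needed.
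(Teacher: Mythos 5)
Your proof is correct and follows essentially the same route as the paper: both arguments use \cref{pro:fibres_have_size_p} to see that the cosets $x+\Fix(\Gbr)$ lie inside $X$ and form a non-trivial system of imprimitivity (you phrase this via the fibres of $\tau$, the paper via the $\lambda$-invariance of $\Fix(\Gbr)$, but these are the same blocks), and then both conclude by the uniqueness statement of \cref{lem:lucchini} together with \cref{prop:Gmult_not_abelian}, with the final assertion coming from $\mathcal{A}$ being a transversal of $\Fix(\Gbr)$.
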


\begin{proof}
    By \cref{pro:fibres_have_size_p}, for all $x \in X$, we have $x + \Fix(\Gbr) \subseteq X$. As $\Gmult$ leaves $\Fix(\Gbr)$ invariant under the $\lambda$-action, we deduce that the cosets $x + \Fix(\Gbr)$ form a non-trivial system of imprimitivity for the $\lambda$-action of $\Gmult$ on $X$. The uniqueness of such a system (\cref{lem:lucchini}) implies that $\mathcal{B}_x = x + \Fix(\Gbr)$.

    Moreover, it was observed earlier that $\mathcal{A}\cap (g+\Fix(\G))$ is a singleton for each $g\in \G$. Together with the first part of this proposition, this gives the last part of the statement. 
\end{proof}

\begin{pro} \label{pro:X_is_a_double_coset_in_GX}
Let $x \in X$, then $X = \Fix(\Gbr)\circ x\circ \Fix(\Gbr)$.
\end{pro}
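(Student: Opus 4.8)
The plan is to show that the double coset $\Fix(\Gbr)\circ x\circ\Fix(\Gbr)$ is precisely the union of the $p$ blocks of the (unique) system of imprimitivity, which forces it to coincide with $X$. First I would rewrite a single left translate of $x$: for $f\in\Fix(\Gbr)$, \cref{lem:lambda_of_fix_is_conjugation} gives $\lambda_f(x)={}^fx=f\circ x\circ\overline{f}$, hence $f\circ x=\lambda_f(x)\circ f$. Since $\Fix(\Gbr)^\circ$ is a subgroup (it is the complement in the semidirect product $\Gmult=\mathcal{A}\rtimes\Fix(\Gbr)^\circ$), we have $f\circ\Fix(\Gbr)=\Fix(\Gbr)$, and therefore
\[
f\circ x\circ\Fix(\Gbr)=\lambda_f(x)\circ f\circ\Fix(\Gbr)=\lambda_f(x)\circ\Fix(\Gbr)=\mathcal{B}_{\lambda_f(x)},
\]
the last equality being \cref{pro:blocks_are_cosets_of_fix} applied to $\lambda_f(x)\in X$ (recall that $X$ is a transitive cycle base, hence $\lambda$-invariant). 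Decomposing the double coset into left cosets of $\Fix(\Gbr)$ then yields
\[
\Fix(\Gbr)\circ x\circ\Fix(\Gbr)=\bigcup_{f\in\Fix(\Gbr)}f\circ x\circ\Fix(\Gbr)=\bigcup_{f\in\Fix(\Gbr)}\mathcal{B}_{\lambda_f(x)}\subseteq X.
\]

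The remaining point is that the blocks $\mathcal{B}_{\lambda_f(x)}$ exhaust all $p$ blocks as $f$ runs through $\Fix(\Gbr)$; equivalently, that $\Fix(\Gbr)$ already acts transitively on the quotient cycle set $\Tilde{X}$, which is the set of blocks and has size $p$ by \cref{pro:fibres_have_size_p}. This follows because $\Gbr$ acts transitively on $\Tilde{X}$ via $\lambda_g([x])=[\lambda_g(x)]$, while $\mathcal{A}$ fixes every block setwise and hence acts trivially on $\Tilde{X}$. Thus the transitive action factors through $\Gbr^\circ/\mathcal{A}$, a group of order $p$; since $\Gmult=\mathcal{A}\rtimes\Fix(\Gbr)^\circ$, the complement $\Fix(\Gbr)^\circ$ maps isomorphically onto this quotient and so also acts transitively on $\Tilde{X}$.

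Combining the two observations, $\{[\lambda_f(x)]\mid f\in\Fix(\Gbr)\}=\Tilde{X}$, so $\{\mathcal{B}_{\lambda_f(x)}\mid f\in\Fix(\Gbr)\}$ is the full collection of blocks and their union is all of $X$. Together with the inclusion above this gives $\Fix(\Gbr)\circ x\circ\Fix(\Gbr)=X$. I expect the only delicate step to be the bookkeeping that converts the abstract double coset into a union of blocks—namely the substitution $f\circ x=\lambda_f(x)\circ f$ coming from \cref{lem:lambda_of_fix_is_conjugation}, and the check that $\lambda_f(x)$ stays inside $X$ so that \cref{pro:blocks_are_cosets_of_fix} applies. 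Once that identity is secured, the regularity of the $\Fix(\Gbr)$-action on $\Tilde{X}$ closes the argument immediately.
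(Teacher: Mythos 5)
Your argument is correct and follows essentially the same route as the paper: both proofs combine the identity $f\circ x={}^fx\circ f=\lambda_f(x)\circ f$ with \cref{pro:blocks_are_cosets_of_fix} to move within a block by right $\circ$-multiplication, and then use transitivity of $\Fix(\Gbr)$ on the $p$ blocks to sweep out all of $X$. The only cosmetic difference is how that transitivity is justified — the paper simply notes that a nonzero $f\in\Fix(\Gbr)$ lies outside $\mathcal{A}$ and hence moves $\mathcal{B}_x$, whereas you factor the block action through $\Gbr^\circ/\mathcal{A}$ and invoke the complement $\Fix(\Gbr)^\circ$ in the semidirect product; both are valid.
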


\begin{proof}
Let $0 \neq f \in \Fix(\Gbr)$. Then ${}^f\mathcal{B}_x = \lambda_f(\mathcal{B}_x) \neq \mathcal{B}_x$ as $f \not\in \mathcal{A}$. Therefore, $\Fix(\Gbr)$ acts transitively on the system of blocks by conjugation. Furthermore, by \cref{pro:blocks_are_cosets_of_fix}, $\Fix(\Gbr)$ acts transitively on every single block by right-multiplication. Therefore, each $y \in X$ is of the form $y = {}^fx\circ f^{\prime}$ for some $f,f^{\prime} \in \Fix(\Gbr)$, which can be rewritten as $y = f\circ x\circ f^{\prime}\circ \overline{f}$.
\end{proof}

From \cref{pro:X_is_a_double_coset_in_GX} it therefore follows that we can coordinatize the elements of $X$ in the following way: by \cref{pro:blocks_are_cosets_of_fix} we can choose an element $\alpha_{0,0} \in X \cap \mathcal{A}$ and some $0 \neq f \in \Fix(\Gbr)$. For $(a,x) \in \Z_p \times \Z_p$, we then set $\alpha_{a,x} = \left({}^{\overline{f}^{\circ a}}\alpha_{0,0}\right)\circ f^{\circ x}$. By \cref{pro:X_is_a_double_coset_in_GX}, this gives a unique coordinatization of the elements in $X$.

$\lambda_{\alpha_{0,0}}$ leaves all blocks invariant, therefore we can write $\lambda_{\alpha_{0,0}}({\alpha_{b,0}}) = \alpha_{b,-\Phi(b)}$ for some map $\Phi: \Z_p \to \Z_p$. We can now calculate:
\begin{align*}
\alpha_{0,0} \cdot \alpha_{b,y} & =  \lambda_{\alpha_{0,0}}^{-1}(\alpha_{b,-\Phi(b)} + (y+\Phi(b))f) \\
& = \alpha_{b,0} + (y+\Phi(b))f \\
& = \alpha_{b,y + \Phi(b)}.
\end{align*}
Also, we need to determine
\[
f^{\circ a} \cdot \alpha_{b,y} = \lambda_{\overline{f}^{\circ a}}\left({}^{\overline{f}^{\circ b}}\alpha_{0,0}\circ f^{\circ y}\right) = {}^{\overline{f}^{\circ (a+b)}}\alpha_{0,0}\circ f^{\circ y} = \alpha_{a+b,y}. 
\]

We write $\alpha_{a,x} = \overline{f}^{\circ a}\circ\alpha_{0,0}\circ f^{\circ (a+x)}$ and calculate:
\begin{align*}
\alpha_{a,x} \cdot \alpha_{b,y} & = f^{\circ (a+x)} \cdot ( \alpha_{0,0} \cdot ( \overline{f}^{\circ a} \cdot \alpha_{b,y})) \\
& = f^{\circ (a+x)} \cdot ( \alpha_{0,0} \cdot \alpha_{b-a,y} ) \\
& = f^{\circ (a+x)} \cdot \alpha_{b-a, y + \Phi(b-a)} \\
& = \alpha_{b+x,y + \Phi(b-a)}.
\end{align*}

\begin{thm} \label{thm:irretractable_cycle_sets_with_p_brace}
Let $X$ be an indecomposable, irretractable cycle set of size $p^2$ with $\Gbr(X)$ a $p$-group, then $X$ is isomorphic to a cycle set of the form $X = \Z_p \times \Z_p$ with the multiplication
\[
(a,x) \cdot (b,y) = (b+x, y + \Phi(b-a))
\]
where $\Phi: \Z_p \to \Z_p$ is a non-constant map with $\Phi(A) = \Phi(-A)$, for all $A \in \Z_p$. Vice versa, this construction always results in an indecomposable, irretractable cycle set with $\Gbr(X)$ a $p$-group.
\end{thm}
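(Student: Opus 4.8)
The plan is to build directly on the coordinatization carried out immediately before the statement. The computation there shows $\alpha_{a,x}\cdot\alpha_{b,y}=\alpha_{b+x,y+\Phi(b-a)}$, so after identifying $\alpha_{a,x}$ with $(a,x)\in\Z_p\times\Z_p$ the cycle set $X$ already has the asserted multiplication for the map $\Phi:\Z_p\to\Z_p$ defined by $\lambda_{\alpha_{0,0}}(\alpha_{b,0})=\alpha_{b,-\Phi(b)}$. Thus for the forward direction it only remains to pin down the two properties of $\Phi$: non-constancy and the symmetry $\Phi(A)=\Phi(-A)$.

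For non-constancy I would argue contrapositively. If $\Phi$ were constant, then $\sigma_{(a,x)}(b,y)=(b+x,y+\Phi(b-a))$ would not depend on $a$, so $\sigma_{(0,x)}=\sigma_{(1,x)}$ and the retraction map $\pi$ would identify $(0,x)$ with $(1,x)$, contradicting irretractability. For the symmetry I would use that $X$, being a cycle set, satisfies the cycloid equation \eqref{eq:c1_cycloid_equation}. Writing $x=(a_1,x_1)$, $y=(a_2,x_2)$, $z=(a_3,x_3)$ and expanding both $(x\cdot y)\cdot(x\cdot z)$ and $(y\cdot x)\cdot(y\cdot z)$, the second coordinates coincide identically (both equal $x_3+\Phi(a_3-a_1)+\Phi(a_3-a_2)$), while the first coordinates read $a_3+x_1+x_2+\Phi(a_2-a_1)$ and $a_3+x_1+x_2+\Phi(a_1-a_2)$; hence \eqref{eq:c1_cycloid_equation} is equivalent to $\Phi(a_2-a_1)=\Phi(a_1-a_2)$ for all $a_1,a_2$, that is, to $\Phi(A)=\Phi(-A)$. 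This settles the forward direction.

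For the converse I would start from any non-constant $\Phi$ with $\Phi(A)=\Phi(-A)$ and equip $\Z_p\times\Z_p$ with the stated operation. Axioms \eqref{eq:c2} and \eqref{eq:c3} hold for every $\Phi$, since $(b,y)\mapsto(b+x,y+\Phi(b-a))$ is visibly invertible and $\Sq(a,x)=(a+x,x+\Phi(0))$ is a bijection; axiom \eqref{eq:c1_cycloid_equation} is equivalent to the symmetry of $\Phi$ by the computation above, so $(X,\cdot)$ is a genuine cycle set. Irretractability follows by running the non-constancy argument in reverse: $\sigma_{(a,x)}=\sigma_{(a',x')}$ forces $x=x'$ and $\Phi(b-a)=\Phi(b-a')$ for all $b$, and since $p$ is prime $a\neq a'$ would make $\Phi$ constant, so $\pi$ is injective. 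Finally each generator $\sigma_{(a,x)}$ has the form $(b,y)\mapsto(b+\beta,y+\gamma_b)$, hence lies in $\Z_p\wr\Z_p$, so $\G(X)\leq\Z_p\wr\Z_p$ is a $p$-group.

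The step I expect to require the most care is indecomposability, i.e. transitivity of $\G(X)$ on $\Z_p\times\Z_p$. Powers of $\sigma_{(0,1)}$, which maps $(b,y)\mapsto(b+1,y+\Phi(b))$, already act transitively on the first coordinate (the blocks $\{k\}\times\Z_p$), so the remaining task is to move within a single block. Here I would note that the elements $\sigma_{(a,0)}$ fix every block, hence lie in $\mathcal{A}$, and that $h=\sigma_{(a,0)}\sigma_{(a',0)}^{-1}\in\mathcal{A}$ acts on $\{0\}\times\Z_p$ by the translation $y\mapsto y+\bigl(\Phi(-a)-\Phi(-a')\bigr)$. Since $\Phi$ is non-constant I can choose $a,a'$ making this translation non-trivial; as the translations form a subgroup of $(\Z_p,+)$, non-triviality upgrades to transitivity on the block. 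Combined with transitivity on the blocks this yields transitivity on all of $X$. The delicate points are to identify the block stabiliser with $\mathcal{A}$ and to check that the induced action on a block is by translations, so that a single non-trivial element suffices.
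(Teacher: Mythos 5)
Your proposal is correct and follows essentially the same route as the paper: take the multiplication rule as given by the preceding coordinatization, observe that \eqref{eq:c1_cycloid_equation} is equivalent to $\Phi(A)=\Phi(-A)$ and irretractability to non-constancy of $\Phi$, and verify \eqref{eq:c2}, \eqref{eq:c3}, indecomposability and the containment $\G(X)\leq\Z_p\wr\Z_p$ directly. The only (immaterial) difference is in the transitivity step, where the paper simply notes that $\sigma_{(0,0)}$ already acts as the non-trivial translation $y\mapsto y+\Phi(b)$ on a block $\{b\}\times\Z_p$ with $\Phi(b)\neq 0$, rather than forming the product $\sigma_{(a,0)}\sigma_{(a',0)}^{-1}$.
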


\begin{proof}
In the preceding calculations, we have already established that the given multiplication rule is necessary. We now determine
\begin{align*}
((a,x) \cdot (b,y)) \cdot ((a,x) \cdot (c,z)) & = (b+x, y + \Phi(b-a)) \cdot (c+x, z + \Phi(c-a)) \\
& = (c+x + y + \Phi(b-a), z + \Phi(c-a) + \Phi(c-b)).
\end{align*}
Similarly,
\[
((b,y) \cdot (a,x)) \cdot ((b,y) \cdot (c,z)) = (c+y + x + \Phi(a-b), z + \Phi(c-b) + \Phi(c-a)).
\]
A comparison shows that in order for $X$ to satisfy \cref{eq:c1_cycloid_equation}, $\Phi(b-a) = \Phi(a-b)$ must hold for all $a,b \in \Z_p$ which amounts to saying that $\Phi(A) = \Phi(-A)$, for all $A \in \Z_p$. By the same calculation, one sees that this obstruction to $\Phi$ is sufficient for $X$ to satisfy \cref{eq:c1_cycloid_equation}. By construction, all maps $\sigma_{(a,x)}$ are bijective. Furthermore, the square map
\[
\Sq(a,x) = (a,x) \cdot (a,x) = (a+x, x + \Phi(0))
\]
is also quickly seen to be bijective. Finally, irretractability is the same as saying that for any $a,a^{\prime} \in \Z_p$, there is at least one $b \in \Z_p$ such that $\Phi(b-a) = \Phi(b-a^{\prime})$. But this is clearly equivalent to $\Phi$ not being constant.

Finally, note that $\G(X)$ clearly acts transitively on the system of blocks $\{a\} \times \Z_p$. For $b \in \Z_p$ with $\Phi(b) \neq 0$, we see that $(0,0) \cdot (b,y) = (b, y + \Phi(b)) \neq (b,y)$ which shows that the $\G(X)$-orbit of $(b,y)$ contains at least the block $\{ b \} \times \Z_p$. This implies that $\G(X)$ acts transitively on $X$, hence $X$ is indecomposable. Also note that $\G(X)$ is contained in $\Z_p\wr \Z_p\leq \Sym_{\Z_p\times \Z_p}$, hence $\G(X)$ is a $p$-group.
\end{proof}
We note the following corollary, which will be useful later in \cref{sec: controlling p'}:
\begin{cor}\label{cor:blocks_intersect_A_generate}
    The elements in $X\cap \mathcal{A} $ generate the whole cycle set.
\end{cor}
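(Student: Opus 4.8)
The goal is to show that the elements in $X \cap \mathcal{A}$ generate the whole cycle set $X$ under the cycle set operation. In the explicit coordinatization established just before \cref{thm:irretractable_cycle_sets_with_p_brace}, the set $X \cap \mathcal{A}$ consists exactly of the elements $\alpha_{a,0}$ for $a \in \Z_p$, since by \cref{pro:blocks_are_cosets_of_fix} each block $\mathcal{B}_x = x + \Fix(\Gbr)$ meets $\mathcal{A}$ in precisely one point, and the chosen coordinates $\alpha_{a,x}$ for fixed $a$ sweep out a single block as $x$ ranges over $\Z_p$, with $\alpha_{a,0}$ the representative lying in $\mathcal{A}$. So in the model $X = \Z_p \times \Z_p$ of \cref{thm:irretractable_cycle_sets_with_p_brace}, the subset $X \cap \mathcal{A}$ corresponds to $\{(a,0) \mid a \in \Z_p\}$.

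The plan is therefore to compute, inside the concrete cycle set of \cref{thm:irretractable_cycle_sets_with_p_brace}, the sub-cycle set generated by $\{(a,0) \mid a \in \Z_p\}$ and show it is everything. First I would record the multiplication of two such generators:
\[
(a,0) \cdot (b,0) = (b, \Phi(b-a)),
\]
which already produces elements $(b, \Phi(b-a))$ outside the generating set whenever $\Phi(b-a) \neq 0$. Since $\Phi$ is non-constant and satisfies $\Phi(0) = 0$ (as $\Phi$ arose from $\lambda_{\alpha_{0,0}}(\alpha_{0,0}) = \alpha_{0,0}$, so fixing the $\alpha_{0,0}$ block), there exists some $d \in \Z_p$ with $\Phi(d) \neq 0$; hence the generated sub-cycle set $S$ contains an element $(b, c)$ with $c \neq 0$ in the second coordinate. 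The key is then to leverage the group structure: using products of the form $(a,0) \cdot (b,c)= (b, c + \Phi(b-a))$ keeps the first coordinate at $b$ while translating the second coordinate by values $\Phi(b-a)$, and the set $\{\Phi(b-a) \mid a \in \Z_p\}$ generates a nontrivial subgroup of $\Z_p$, which (since $p$ is prime) is all of $\Z_p$.

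Concretely, I would argue that for a fixed value of the first coordinate $b$ for which $S$ already contains at least one element $(b,c_0)$, iterating the operation $(a,0)\cdot(b,-)$ shows $S$ contains $(b, c_0 + \sum_i \Phi(b - a_i))$ for arbitrary finite sequences $a_i$; since the $\Phi(b-a_i)$ span $\Z_p$ over $\Z$ and $p$ is prime, this yields the entire block $\{b\} \times \Z_p \subseteq S$. Having one full block in $S$, I would then move between blocks: from $(b,y) \in S$ and a generator $(a,0)$, the product $(a,0)\cdot(b,y) = (b, y+\Phi(b-a))$ stays in block $b$, so to change the first coordinate I instead compute $(b,y)\cdot(a',0) = (a'+y, \Phi(a'-b))$, whose first coordinate $a' + y$ ranges over all of $\Z_p$ as $y$ varies over the full block $\{b\}\times\Z_p$ that we have already shown lies in $S$. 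This reaches every block, and combined with the previous step (a full block is generated once any off-diagonal element is reached) shows $S = \Z_p \times \Z_p$.

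The main obstacle is the bookkeeping in the second step: one must carefully check that translating the second coordinate by the values $\Phi(b-a)$ actually generates all of $\Z_p$ rather than a proper subgroup. This is where non-constancy of $\Phi$ is essential — it guarantees some $\Phi(d) \neq 0$, and since any nonzero element of $\Z_p$ generates $\Z_p$ additively, a single nonzero value suffices to fill out a block once we are free to add it repeatedly. I would make sure the inductive generation argument is phrased so that it only requires producing one nonzero second-coordinate value and then exploiting that $\Z_p$ has no proper nontrivial subgroups; the symmetry condition $\Phi(A) = \Phi(-A)$ plays no role here and need not be invoked.
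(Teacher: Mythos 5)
Your proof is correct and takes essentially the same route as the paper, which identifies $X\cap\mathcal{A}$ with $\{(a,0)\mid a\in\Z_p\}$ in the model of \cref{thm:irretractable_cycle_sets_with_p_brace} and then appeals to the non-constancy of $\Phi$; you merely spell out the closure computation that the paper leaves as ``we easily see''. The only blemish is the unjustified aside that $\Phi(0)=0$ (the statement of \cref{thm:irretractable_cycle_sets_with_p_brace} imposes no such condition, and $\lambda_{\alpha_{0,0}}$ need not fix $\alpha_{0,0}$), but this is harmless since non-constancy alone already supplies some $d$ with $\Phi(d)\neq 0$, which is all your argument uses.
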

\begin{proof}
Note that, using the explicit form in \cref{thm:irretractable_cycle_sets_with_p_brace}, we are considering
the set $X\cap \mathcal{A}=\{(a,0)\mid a\in \Z_p\}$.
    As $\Phi$ is non-constant, we easily see that these elements generate the whole cycle set $X$.
\end{proof}

\subsection{Getting rid of redundancy and determining automorphisms} \label{subsec:redundancy}

The aim of this subsection is to determine unique representatives for the irretractable cycle sets determined in \cref{subsec:construction} and moreover, describe their automorphism groups.

Let $\mathcal{F}_p$ be the set of all non-constant functions $\Phi: \Z_p \to \Z_p$ with the property that $\Phi(A) = \Phi(-A)$, for all $A \in \Z_p$. $\mathcal{F}_p$ is acted upon by $\Z_p^{\ast}$ via $({}^{\alpha} \Phi)(A) = \alpha^{-1} \Phi(\alpha A)$. From now on, let $\mathcal{R}_p$ be a fixed system of representatives for this action.

Recall from \cref{thm:irretractable_cycle_sets_with_p_brace} that the cycle sets in the considered case are described as $\Z_p \times \Z_p$ with the operation
\[
(a,x) \cdot (b,y) = (b+x, y + \Phi(b-a)),
\]
where $\Phi \in \mathcal{F}_p$.

By \cref{prop:Gmult_not_abelian}, $\Gbr(X)^\circ$ is non-abelian and, by \cref{lem:lucchini}, has a unique non-trivial system of imprimitivity that consists of the blocks $\{a \} \times \Z_p$. Assume that $X = \Z_p \times \Z_p$ comes with two cycle set operations $\cdot, \Pri{\cdot}$ that are given by the parameters $\Phi, \Pri{\Phi} \in \mathcal{F}_p$ and that $\phi: (X,\cdot) \to (X,\Pri{\cdot})$ is an isomorphism. Then $\phi\G(X,\cdot)\phi^{-1}=\G(X,\cdot')$ so in particular $\phi$ must normalize the cyclic permutation action on the blocks and thus be of the form $\phi(a,x) = (\alpha a + \beta, \pi_a(x))$ for some $\pi_a \in \Sym_{\Z_p}$, $\alpha \in \Z_p^*$, $\beta \in \Z_p$. We now calculate
\begin{align*}
    \phi((a,x) \cdot (b,y)) & = (\alpha (b + x) + \beta , \pi_{b+x}(y + \Phi(b-a))), \\
    \phi(a,x) \Pri{\cdot} \phi(b,y) & = ( \alpha b + \beta + \pi_a(x), \pi_b(y) + \Pri{\Phi}(\alpha(b-a)).
\end{align*}
Equating these terms results in $\pi_a(x) = \alpha x$, considering the first coordinate. Taking this into account when considering the second coordinate leaves us with the equation
\[
\alpha (y + \Phi(b-a)) = \alpha y + \Pri{\Phi}(\alpha(b-a)) \Leftrightarrow \Phi(b-a) = \alpha^{-1}\Phi'(\alpha(b-a)).
\]
This shows that $\Phi, \Pri{\Phi}$ define isomorphic cycle sets if and only if there is an $\alpha \in \Z_p^*$ such that $\Phi={}^{\alpha}\Phi'$. Putting $\Phi = \Pri{\Phi}$, the same considerations prove that $\phi$ provides an automorphism of a solution with parameter $\Phi$ if and only if $\phi(a,x) = (\alpha x + \beta, \alpha x)$ for $\alpha \in \Z_p^{\ast}$, $\beta \in \Z_p$ with ${}^{\alpha}\Phi = \Phi$.

We conclude:

\begin{thm} \label{thm:isoclasses_of_irretractable_cyclesets_sylow_case}
Let $X$ be an indecomposable irretractable cycle set of size $p^2$ where $p$ is a prime.
\begin{enumerate}
    \item If $\mathcal{G}(X)$ is a $p$-group, then there is a unique $\Phi \in \mathcal{R}_p$ such that $X$ is isomorphic to the cycle set on $X = \Z_p \times \Z_p$ with multiplication
    \[
    (a,x) \cdot (b,y) = (b+x, y + \Phi(b-a)).
    \]
    \item Let $X, \Phi$ be as in the previous item. Then any automorphism of $X$ is of the form $(a,x) \mapsto (\alpha a + \beta, \alpha x)$ for some $\alpha \in \Z_p^{\ast}$, $\beta \in \Z_p$ with ${}^{\alpha}\Phi = \Phi$.
\end{enumerate}
\end{thm}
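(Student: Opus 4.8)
The plan is to assemble both parts from the normal form and the isomorphism analysis already developed in this subsection, so that essentially no new computation is required. For the existence statement in part (1), I would invoke \cref{thm:irretractable_cycle_sets_with_p_brace}, which realizes $X$ as a cycle set on $\Z_p \times \Z_p$ with multiplication $(a,x)\cdot(b,y) = (b+x,\, y+\Phi_0(b-a))$ for some $\Phi_0 \in \mathcal{F}_p$. It then remains to normalize the parameter inside the fixed transversal $\mathcal{R}_p$. For this I rely on the displayed isomorphism computation above, which shows that parameters $\Phi, \Pri{\Phi} \in \mathcal{F}_p$ give isomorphic cycle sets if and only if $\Pri{\Phi} = {}^{\alpha}\Phi$ for some $\alpha \in \Z_p^*$, that is, exactly when $\Phi$ and $\Pri{\Phi}$ lie in a common orbit of the $\Z_p^*$-action on $\mathcal{F}_p$. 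Since $\mathcal{R}_p$ is a system of representatives for this action, the orbit of $\Phi_0$ meets $\mathcal{R}_p$ in a unique element $\Phi$, which yields both existence and uniqueness.

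The step I expect to require the most care is the justification that every isomorphism $\phi$ is affine on blocks, i.e. of the form $\phi(a,x) = (\alpha a + \beta,\, \pi_a(x))$; all subsequent conclusions hinge on this. Here the structure theory enters: $\phi$ conjugates $\G(X,\cdot)$ onto $\G(X,\Pri{\cdot})$, and by \cref{prop:Gmult_not_abelian} together with \cref{lem:lucchini} both permutation groups are non-abelian $p$-groups possessing the \emph{unique} non-trivial system of imprimitivity $\{\{a\}\times\Z_p : a\in\Z_p\}$. Consequently $\phi$ must permute these blocks, so the first coordinate of $\phi(a,x)$ depends on $a$ alone; and since $\G$ induces the regular cyclic (translation) action on the set of blocks, whose normalizer in $\Sym_{\Z_p}$ is $\mathrm{AGL}(1,p)$, the induced permutation is affine, $a \mapsto \alpha a + \beta$. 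With $\phi$ of this shape, the routine comparison of $\phi((a,x)\cdot(b,y))$ with $\phi(a,x)\Pri{\cdot}\phi(b,y)$ forces $\pi_a(x)=\alpha x$ and the relation $\Phi = {}^{\alpha}\Pri{\Phi}$, which is precisely the isomorphism criterion used above.

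Finally, for part (2) I would specialize the same analysis to $\Pri{\Phi} = \Phi$. An automorphism is an isomorphism $(X,\cdot)\to(X,\cdot)$, so by the previous paragraph it has the form $\phi(a,x) = (\alpha a + \beta,\, \alpha x)$, and the derived relation becomes ${}^{\alpha}\Phi = \Phi$; conversely, any such $\phi$ is checked directly to preserve the operation. This gives the claimed description of $\Aut(X)$ and completes the proof.
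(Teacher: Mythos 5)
Your proposal is correct and follows essentially the same route as the paper: invoke \cref{thm:irretractable_cycle_sets_with_p_brace} for the normal form, use \cref{prop:Gmult_not_abelian} and \cref{lem:lucchini} to force any isomorphism to respect the unique block system and act affinely on block indices, then equate the two sides of the homomorphism condition to obtain $\pi_a(x)=\alpha x$ and $\Phi={}^{\alpha}\Pri{\Phi}$, specializing to $\Phi=\Pri{\Phi}$ for the automorphism statement. Your added remark that the normalizer of the regular cyclic action on blocks is $\mathrm{AGL}(1,p)$ merely makes explicit a step the paper leaves implicit.
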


\section{Indecomposable irretractable cycle sets of size \texorpdfstring{$p^2$}{p2} - The general case}\label{sec: controlling p'}

In this section, we will focus on the general problem.
More precisely, we will construct all indecomposable irretractable cycle sets $X$ of size $p^2$, where $p$ is a prime number.

Before restricting to this specific case, we prove a useful lemma. Given a brace $A$ and a subset $S\subseteq A$ we define
$$\Fix_A(S)=\{a\in A\mid \lambda_s(a)=a \text{ for all }s\in S\}.$$
In fact, it follows from \cref{lem:lambda_of_fix_is_conjugation} that $\lambda_{\overline{a}}(^{\overline{a}}s)=-a+s+\lambda_s(a)$, so $\lambda_s(a)=a$ if and only if $\lambda_a(^{\overline{a}}s)=s$, which is equivalent to $\lambda_{\overline{a}}(s)={}^{\overline{a}}s$. So alternatively $$\Fix_A(S)=\{a\in A\mid \lambda_{\overline{a}}(s)={}^{\overline{a}}s\text{ for all }s\in S\}.$$

\begin{lem}\label{lem:Fix_is_brace}
    Let $A$ be a finite brace, $L$ a left ideal of $A$ and $G$ a normal subgroup of $A^\circ$, then $\Fix_A(L\cap G)$
    is a subbrace of $A$ contained in the normaliser of $L\cap G$ in $A^\circ$
\end{lem}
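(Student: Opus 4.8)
The plan is to treat the additive and multiplicative group axioms for $F:=\Fix_A(L\cap G)$ separately, with the normaliser containment serving as the bridge between them. Write $S=L\cap G$. I would first record the harmless observation that $L$, being a left ideal, is closed under $\circ$ (since $a\circ b=a+\lambda_a(b)$ and $\lambda_a(L)\subseteq L$), hence is a finite submonoid of $A^\circ$ and thus a subbrace; consequently $S$ is a subgroup of $A^\circ$, so that $N_{A^\circ}(S)$ is a genuine subgroup and the statement is meaningful.

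Next I would check that $F$ is a subgroup of $A^+$, which is immediate: for each $s\in S$ the map $\lambda_s$ is an automorphism of $A^+$, so its fixed-point set $\{a\in A\mid \lambda_s(a)=a\}$ is a subgroup of $A^+$, and $F$ is the intersection of these subgroups over $s\in S$.

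The heart of the argument, and the only non-formal point, is the containment $F\subseteq N_{A^\circ}(S)$; this is the step I expect to be the main obstacle. Here I would invoke the second description of $\Fix_A$ recorded just before the statement, namely that $a\in F$ if and only if $\lambda_{\overline{a}}(s)={}^{\overline{a}}s$ for all $s\in S$. The key observation is that the two sides of this equality a priori live in different subgroups: since $s\in L$ and $L$ is a left ideal, $\lambda_{\overline{a}}(s)\in L$; since $s\in G$ and $G$ is normal in $A^\circ$, we have ${}^{\overline{a}}s\in G$. As the two sides coincide, their common value lies in $L\cap G=S$. Hence ${}^{\overline{a}}S\subseteq S$, and because conjugation by $\overline{a}$ is a bijection of the finite set $S$, in fact ${}^{\overline{a}}S=S$, so $\overline{a}$—and therefore $a$—normalises $S$.

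Finally I would deduce that $F$ is closed under $\circ$, which together with the additive step and finiteness shows $F$ is a subgroup of $A^\circ$, hence a subbrace. For $a,b\in F$ I would write $a\circ b=a+\lambda_a(b)$; since $F$ is additively closed it suffices to show $\lambda_a(b)\in F$. Using that $\lambda\colon A^\circ\to\Aut(A^+)$ is a homomorphism, for each $s\in S$ one has $\lambda_s\lambda_a=\lambda_a\lambda_{{}^{\overline{a}}s}$. Now the normaliser step gives ${}^{\overline{a}}s\in S$, and $b\in F$ gives $\lambda_{{}^{\overline{a}}s}(b)=b$, so $\lambda_s(\lambda_a(b))=\lambda_a(\lambda_{{}^{\overline{a}}s}(b))=\lambda_a(b)$. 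Thus $\lambda_a(b)\in F$ and $a\circ b=a+\lambda_a(b)\in F$, completing the verification.
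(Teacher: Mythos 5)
Your proposal is correct and follows essentially the same route as the paper: the crux in both is that for $a\in\Fix_A(L\cap G)$ and $s\in L\cap G$ the element $\lambda_{\overline{a}}(s)={}^{\overline{a}}s$ lies in $L$ (left ideal) and in $G$ (normality), hence in $L\cap G$, which yields the normaliser containment and then multiplicative closure by finiteness. Your extra remarks --- that $L\cap G$ is a subgroup of $A^\circ$ so the normaliser is meaningful, and the derivation of $\circ$-closure via $\lambda_a(b)\in\Fix_A(L\cap G)$ rather than directly from the alternative characterisation --- are harmless elaborations of the same argument.
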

\begin{proof}
    From the original definition of $\Fix_A(L\cap G)$ we see that $\Fix_A(L\cap G)^+$ is a group. 
    Let $a\in \Fix_A(L\cap G)$ and $b\in L\cap G$. Then $\lambda_{\overline{a}}(b)\in L$, but as $\lambda_{\overline{a}}(b)={}^{\overline{a}}b$, also $\lambda_{\overline{a}}(b)\in G$. We conclude that $\lambda_{\overline{a}}(b)\in L\cap G$. Using the alternative description of $\Fix_A(L\cap G)$ we now see that $\Fix_A(L\cap G)$ is closed under the $\circ$-operation and non-empty. As $A$ is finite, we conclude that $\Fix_A(L\cap G)^\circ$ is a subgroup of $A^{\circ}$. Therefore $\Fix_A(L\cap G)$ is a subbrace. As ${}^{\overline{a}}b\in L\cap G$ for all $a \in \Fix_A(L\cap G)$, $b \in L \cap G$ we find indeed that $\Fix_A(L\cap G)$ is contained in the normaliser of $L\cap G$ in $A^\circ$.
\end{proof}

In the remainder of the section, we let $X$ be an indecomposable irretractable cycle set of size $p^2$ and $\G=\G(X)$. As we already covered the case where $\G$ is a $p$-group in \cref{sec:irretractable_cycle_sets_sylow}, we can assume that $\G$ is not a $p$-group, but recall that $\G$ is solvable by \cite[Theorem 2.15]{ESS_YangBaxter}. We associate $X$ with its image in $\G$, which is a transitive cycle base. As $X$ is irretractable, it follows from \cite[Lemma 2.1]{BCJO_irretrsqfree} that $\Soc(\G)=0$.

Let $\G_p$ be the Sylow $p$-subgroup of $\G^+$ and let $\G_{p'}$ be the Hall $p'$-subgroup of $\G^+$, both are characteristic in $\G^+$ hence they are left ideals of $\G$. We denote $X=\{x_1,\ldots,x_{p^2}\}$ and for $1\leq i\leq p^2$ we define $y_i\in \G_p$ and $z_i\in \G_{p'}$ such that $x_i=y_i+z_i$. As the $\lambda$-action of $\G_p^\circ$ is transitive on $X$, and therefore also on $Y=\{y_i\mid 1\leq i\leq p^2\}$, we find that $Y$ is a transitive cycle base of the brace $\G_p$. In particular, this implies that $|Y|\in \{1,p,p^2\}$.

Let $q \neq p$ be prime and $\G_q$ the $q$-Sylow subgroup of $\Gadd$ which is a left ideal by the same argument as above. $\Fix_\G(\G_q)$ is a brace by \cref{lem:Fix_is_brace} and $Y\cap \Fix_\G(\G_q)$ is a sub-cycle set of $Y$. Now if $Y$ has finite multipermutation level, then $Y\cap \Fix_\G(\G_q)$ is either empty or equal to $Y$ by \cite[Theorem 5.1]{castelli2023studying}. As $|Y|$ is a $p$-power, $\G_q$ fixes at least one point in $Y$ under the $\lambda$-action. This means that $Y\cap \Fix_\G(\G_q)=Y$ and thus $\G_p\subseteq \Fix_\G(\G_q)$. It follows that $\G_p^\circ$ normalizes $\G_q^\circ$ but as $\G_p$ acts faithful and transitive on a set of size $p^2$, this implies that $\G_q=0$ and hence, $\G_{p^{\prime}} = 0$ but this contradicts the assumption that $\G$ is not a $p$-group. We therefore deduce that $Y$ is not of finite multipermutation level. Together with the earlier observation that $|Y|\in \{1,p,p^2\}$, we conclude that $Y$ is irretractable of size $p^2$ and therefore as described in \cref{thm:irretractable_cycle_sets_with_p_brace}. In particular, we find that $\G_p^\circ$ is not abelian.

From now on we consider the unique block system of $X$ under the action of $\G$, and recall that the uniqueness is guaranteed by \cref{lem:lucchini}. As before we denote this by $\{\mathcal{B}_x\mid x\in X\}$. We denote the subgroup of $\G^\circ$ that fixes the blocks setwise by $\mathcal{A}$. Also, we define $\mathcal{A}_p=\mathcal{A}\cap \G_p$ and $\mathcal{A}_{p'}=\mathcal{A}\cap \G_{p'}$. Note that $\mathcal{A}_p$ is a $p$-Sylow subgroup of $\mathcal{A}$ and $\mathcal{A}_{p'}$ is a Hall $p'$-subgroup of $\mathcal{A}$. In particular, $\mathcal{A}_p$ is normal in $\G^\circ$ by \cref{pro:impritive_solvable_action_on_Z2}.

Let $a\in \mathcal{A}_p$ and $g\in \G_{p'}$, then \cref{lem:lambda_of_fix_is_conjugation} yields $\lambda_g({}^{\overline{g}}a)=-g+a+\lambda_a(g)$ hence $-a+\lambda_g({}^{\overline{g}}a)=-g+\lambda_a(g)$. As $-a+\lambda_g({}^{\overline{g}}a)$ is contained in $\G_p$ and $-g+\lambda_a(g)$ is contained in $\G_{p'}$, we find that $\lambda_a(g)=g$. By \cref{lem:lambda_of_fix_is_conjugation} this implies that ${}^ga=\lambda_g(a)$, so the $\lambda$-action of $\G_{p'}$ restricts to $\mathcal{A}_p$, so also to $Y\cap \mathcal{A}_p$.

By \cref{pro:blocks_are_cosets_of_fix} we know that $\mathcal{A}_p$ contains a unique representative of each block in the block system of $Y$ under the action of $\G_{p^{\prime}}^\circ$, hence also under the action of $\G^\circ$. This means that $\mathcal{A}_{p'}$ acts trivially on the set $Y\cap \mathcal{A}_p$ and thus $Y\cap \mathcal{A}_p\subseteq \Fix_\G(\mathcal{A}_{p'})$. From \cref{lem:Fix_is_brace} we know that $\Fix_\G(\mathcal{A}_{p'})$ is a subbrace, since $\mathcal{A}_{p'}=\mathcal{A}\cap \G_{p'}$. In particular, $\Fix_\G(\mathcal{A}_{p'})\cap Y$ is a sub-cycle set of $Y$ which contains $Y\cap \mathcal{A}_p$, but from \cref{cor:blocks_intersect_A_generate} it then follows that $\Fix_\G(\mathcal{A}_{p'})\cap Y=Y$. However, as the $\lambda$-action of $\G$ on $Y$ is faithful, this implies that $\mathcal{A}_{p'}=0$.

By \cref{pro:impritive_solvable_action_on_Z2} we find that $|\G_{p'}|<p$ and thus the $\lambda$-action of $\G_p$ on $\G_{p'}$ is trivial. As this same action is transitive on $Z$ we find $|Z|=1$. Let $z\in Z$, then $\lambda_z$ is a brace automorphism of $\G_p$ by \cref{lem:lambda_of_fix_is_conjugation} and therefore also its restriction to $Y$ yields a cycle set automorphism. For any $x_i,x_j\in X$ we find
    $$x_i\cdot x_j=(y_i+z)\cdot (y_j+z)=\lambda_{y_i+z}^{-1}(y_j+z)=\lambda_z^{-1}(y_i\cdot y_j)+z.$$
We find that the cycle set structure on $X$ is obtained by deforming the cycle set structure on $Y$ by an automorphism of $Y$. \cref{lem:deformation_of_cycle_set_by_automorphism} shows that such a deformation is always possible. 

\begin{rem}\label{rem:cabling}
    We remark that the idea of starting from a finite cycle set $X$ and then considering its projection $Y$ onto the $p$-Sylow subgroup $\G_p$, is strongly related to the notion of \emph{cabling}, see \cite{MR4717099,MR4716443}. Recall that for $k\in \Z$, the \emph{$k$-cabled cycle set} is defined as $(X,\cdot_k)$ with 
    $x\cdot_k y=(k\sigma_{x})(y)$, where $k\sigma_x\in \G(X)$ is the $k$-th power of $\sigma_x$ in $(\G(X),+)$. 

    In particular, $\G(X,\cdot_k)$ is a subgroup of $\G(X,\cdot)^\circ$. We now claim that it is even a subbrace. If we denote the $\sigma$-maps of $(X,\cdot_k)$ by $\sigma_x'$, then we find that the addition $+_k$ in $\G(X,\cdot_k)$ is given by 
    \begin{align*}
        \sigma_x'+_k\sigma_y'&=\sigma_x'\circ \sigma_{x\cdot_k y}'\\
        &=(k\sigma_x)\circ (k\sigma_{(k\sigma_x)(y)})\\
        &=(k\sigma_x)+\lambda_{k\sigma_x}(k\sigma_{(k\sigma_x)(y)})\\
        &=(k\sigma_x)+k\lambda_{k\sigma_x}(\sigma_{(k\sigma_x)(y)})\\
        &=(k\sigma_x)+(k\sigma_y)=\sigma_x'+\sigma_y',
    \end{align*}
    where $+$ denotes the addition in $\G(X,\cdot)$.
    
    Let $|\G(X,\cdot)|=p^rm$ where $(p,m)=1$. As the additive $p$-Sylow subgroup is $\G(X,\cdot)_p=\{kg\mid g\in \G\}$ for any non-zero multiple $k$ of $m$, we find that $\G(X,\cdot)_p= \G(X,\cdot_k)$. In particular, $\Ret(X,\cdot_k)$ isomorphic to the cycle set $\{k\sigma_x\mid x\in X\}\subseteq \G(X,\cdot)$ where the equivalence class of $x$ is mapped to $k\sigma_x$. If we let $k$ be such that $k\equiv 1 \pmod{p^r}$ then we find that the $k\sigma_x$ is precisely the projection of $\sigma_x$ onto $\G(X,\cdot)_p$, hence in this case we find that $Y\cong \Ret(X,\cdot_k)$, with $Y$ as before.
\end{rem}
\begin{pro}
    Let $(X,\cdot)$ be an indecomposable cycle set of order $p^n$, with $p$ a prime. Let $k$ be the largest divisor of $|\G(X,\cdot)|$ coprime to $p$. If $(X,\cdot_k)$ has finite multipermutation level, then $\G(X,\cdot)$ is a $p$-group and thus $k=1$.
\end{pro}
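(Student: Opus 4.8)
The plan is to prove the contrapositive: assuming that $\G(X,\cdot)$ is not a $p$-group, I will show that $(X,\cdot_k)$ cannot have finite multipermutation level. Write $|\G(X,\cdot)|=p^rm$ with $\gcd(p,m)=1$, so that $k=m>1$ and some prime $q\neq p$ divides $m$. The idea is to transport the hypothesis onto the additive Sylow $p$-subgroup $\G_p$ of $\G(X,\cdot)$ and then reuse the fixed-point argument from the general-case analysis, now for a set of size $p^n$ rather than $p^2$.

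First I would produce a convenient transitive cycle base of $\G_p$. By \cref{rem:cabling} the permutation brace $\G(X,\cdot_k)$ equals $\G_p$, so $\Ret(X,\cdot_k)$ is a transitive cycle base of $\G_p$ of $p$-power size (its cardinality divides $|X|=p^n$ by \cref{pro:fibres_have_same_cardinality}). On the other hand, let $W=\pi(X)$ be the image of $X$ inside $\G(X,\cdot)$, a transitive cycle base and hence a single $\lambda$-orbit, and let $Y$ be its projection onto $\G_p$ along the Hall $p'$-subgroup $\G_{p'}$. Because $\G_p$ is characteristic in $\G^+$, every $\lambda_g$ commutes with this projection, so $Y$ is invariant under the whole $\lambda$-action of $\G(X,\cdot)$ and is itself a transitive cycle base of $\G_p$ of $p$-power size. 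The key observation is that the multipermutation level of a transitive cycle base of a finite brace $B$ depends only on $B$: the retraction of such a base is a transitive cycle base of $B/\Soc(B)$, so finiteness of the level is governed by the socle series of $B$ alone, which a short induction confirms. Applied to $B=\G_p$ with its two cycle bases $\Ret(X,\cdot_k)$ and $Y$, the hypothesis that $(X,\cdot_k)$ has finite multipermutation level forces $Y$ to have finite multipermutation level as well.

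Next I would run the fixed-point argument on $Y$. Let $\G_q$ be the Sylow $q$-subgroup of $\G^+$; it is characteristic, hence a left ideal, so \cref{lem:Fix_is_brace} (with $L=\G_q$ and the normal subgroup $\G^\circ$) shows that $\Fix_\G(\G_q)$ is a subbrace contained in the normaliser of $\G_q^\circ$ in $\G^\circ$. Then $Y\cap\Fix_\G(\G_q)$ is a sub-cycle set of $Y$, and since $Y$ has finite multipermutation level, \cite[Theorem 5.1]{castelli2023studying} forces it to be empty or all of $Y$. As $\G_q$ is a $q$-group acting via $\lambda$ on the $\lambda$-invariant set $Y$ of $p$-power size, it fixes at least one point, so the intersection equals $Y$; thus $Y\subseteq\Fix_\G(\G_q)$, and since $Y$ generates $\G_p^+$ we obtain $\G_p\subseteq\Fix_\G(\G_q)$. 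In particular $\G_p^\circ$ normalises $\G_q^\circ$.

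Finally I would derive the contradiction exactly as for size $p^2$. The group $\G(X,\cdot)^\circ$ acts faithfully and transitively on $X$, so its Sylow $p$-subgroup $\G_p^\circ$ is also transitive on $X$. Since $\G_p^\circ$ normalises $\G_q^\circ$, the $\G_q^\circ$-orbits on $X$ form a block system permuted transitively by $\G_p^\circ$; their common size divides both $|\G_q^\circ|$, a power of $q$, and $|X|=p^n$, hence equals $1$. By faithfulness $\G_q=0$, contradicting $q\mid m$, and therefore $\G(X,\cdot)$ is a $p$-group with $k=1$. I expect the main obstacle to be the bridge of the second paragraph: the hypothesis naturally lives on $(X,\cdot_k)$, whose canonical base $\Ret(X,\cdot_k)$ need not be invariant under the full $\lambda$-action of $\G(X,\cdot)$, whereas Castelli's dichotomy and the normaliser argument must be applied to a $\lambda$-invariant base sitting inside $\G(X,\cdot)$; reconciling these via \cref{rem:cabling} together with the brace-intrinsic nature of the multipermutation level is the crux, after which the size-$p^2$ argument carries over verbatim.
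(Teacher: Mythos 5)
Your argument is correct and is essentially the paper's own proof: both transport the finite multipermutation level hypothesis to a transitive cycle base of $\G_p$ and then combine \cref{lem:Fix_is_brace}, Castelli's dichotomy and the fixed-point/normaliser contradiction for a $q$-subgroup acting faithfully on a set of $p$-power size. The only divergence is your bridge through the projection of $\pi(X)$ onto $\G_p$ and the brace-intrinsic characterisation of finite multipermutation level, which is a correct but unnecessary detour: the copy of $\Ret(X,\cdot_k)$ inside $\G(X,\cdot)$ given by \cref{rem:cabling} is already invariant under the full $\lambda$-action of $\G(X,\cdot)$ (each $\lambda_g$ is an additive automorphism, hence commutes with taking $k$-th additive powers), so the paper runs the fixed-point argument directly on that set.
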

\begin{proof}

    The proof is essentially the same as how we proved earlier in this section that $Y$ is not of finite multipermutation level. 

    First of all, as $\G(X,\cdot_k)^\circ$ is a $p$-Sylow subgroup, $(X,\cdot_k)$ is still an indecomposable cycle set. Now let $Y=\{k\sigma_x\mid x\in X\}\cong \Ret(X,\cdot_k)$. As $(X,\cdot_k)$ has finite multipermutation level, so does $Y$. 
    
    Let $q\neq p$ be a prime and let $\G(X,\cdot)_q$ be the $q$-Sylow subgroup of $\G(X,\cdot)^+$. If we consider the $\lambda$-action of $\G(X,\cdot)_q$ on $Y$ we find that it has fixed points. By \cref{lem:Fix_is_brace} we find that $\Fix(\G(X,\cdot)_q)$ is a brace thus $\Fix(\G(X,\cdot)_q)\cap Y$ is a sub-cycle set. As this sub-cycle set is non-empty, \cite[Theorem 5.1]{castelli2023studying} implies that $Y\subseteq \Fix(\G(X,\cdot)_q)$ and hence $\G(X,\cdot)_p\subseteq \Fix(\G(X,\cdot)_q$. As a result, $\G(X,\cdot)_q$ is a normal subgroup of $\G(X,\cdot)^\circ$, but this is impossible as $\G(X,\cdot)^\circ$ acts transitively and faithfully on a set of $p$-power order. We conclude that $\G(X,\cdot)$ is a $p$-group and therefore $k=1$.
\end{proof}

\begin{lem}\label{lem:deformation_of_cycle_set_by_automorphism}
    Let $(X,\cdot)$ be a cycle set and $\phi$ be an automorphism of $(X,\cdot)$. Then the following statements hold:
    \begin{enumerate}
        \item $X$ is a cycle set for the operation
    $$x\cdot_\phi y=\phi(x\cdot y).$$
    \item If $(X,\cdot)$ is irretractable, then so is $(X,\cdot_\phi)$.
    \end{enumerate}
\end{lem}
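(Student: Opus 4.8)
The plan is to reduce everything to the single identity relating the translation maps. Writing $\sigma_x$ for left multiplication in $(X,\cdot)$ and $\sigma'_x$ for left multiplication in $(X,\cdot_\phi)$, the defining formula $x\cdot_\phi y=\phi(x\cdot y)$ says exactly that $\sigma'_x=\phi\circ\sigma_x$. This relation, combined with the fact that $\phi$ is a cycle set \emph{automorphism}, carries the whole argument.

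For item (1) I would verify the three cycle set axioms directly. Axiom \eqref{eq:c2} is immediate, since each $\sigma'_x=\phi\circ\sigma_x$ is a composition of bijections. For \eqref{eq:c3} the square map of $\cdot_\phi$ is $x\mapsto x\cdot_\phi x=\phi(\Sq(x))=\phi\circ\Sq$, again a composition of two bijections and hence bijective. The only computation with any content is \eqref{eq:c1_cycloid_equation}. Here I would expand
$(x\cdot_\phi y)\cdot_\phi(x\cdot_\phi z)=\phi\bigl(\phi(x\cdot y)\cdot\phi(x\cdot z)\bigr)$
and then use that $\phi$ is an automorphism, i.e. $\phi(a)\cdot\phi(b)=\phi(a\cdot b)$, to pull the inner $\phi$ outward and obtain $\phi^2\bigl((x\cdot y)\cdot(x\cdot z)\bigr)$. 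The symmetric expansion of the other side yields $\phi^2\bigl((y\cdot x)\cdot(y\cdot z)\bigr)$, so applying $\phi^2$ to \eqref{eq:c1_cycloid_equation} for the original operation equates the two.

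For item (2) I would argue that the invertibility of $\phi$ leaves the retraction relation untouched. Recall that $(X,\cdot)$ is irretractable precisely when $\sigma_x=\sigma_y$ forces $x=y$. Since $\sigma'_x=\phi\circ\sigma_x$ with $\phi$ invertible, left-cancellation of $\phi$ gives $\sigma'_x=\sigma'_y\iff\sigma_x=\sigma_y$. Thus $(X,\cdot)$ and $(X,\cdot_\phi)$ share the very same retraction relation, and irretractability of the former transfers verbatim to the latter.

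I do not anticipate a genuine obstacle: the statement is a formal consequence of $\sigma'_x=\phi\circ\sigma_x$. The one point deserving a moment's care is in \eqref{eq:c1_cycloid_equation}, where one must invoke the full strength of $\phi$ being a cycle set homomorphism rather than merely a bijection, as this is exactly what permits $\phi$ to commute past the product and reduce the identity to the corresponding axiom for $\cdot$.
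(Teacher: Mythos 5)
Your proof is correct, but for part (1) it takes a genuinely different route from the paper. You verify the three axioms directly from the relation $\sigma'_x=\phi\circ\sigma_x$: the computation reducing (C1) for $\cdot_\phi$ to $\phi^2$ applied to (C1) for $\cdot$ is exactly right, and it does use the full homomorphism property of $\phi$ at the step $\phi(a)\cdot\phi(b)=\phi(a\cdot b)$, as you note. The paper explicitly acknowledges that this direct verification works but deliberately avoids it: instead it realizes $(X,\cdot_\phi)$ as the sub-cycle set $X+1$ of a semidirect product brace $G(X,\cdot)\rtimes\Z$, where $\Z$ carries the trivial brace structure and acts through the automorphism of $G(X,\cdot)$ induced by $\phi$. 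The reason is not rigour but reuse: that explicit brace-theoretic model is the input to the next lemma, where the authors identify $\G(X,\cdot_\phi)^\circ$ as $\G(X,\cdot)^\circ\rtimes\langle\phi\rangle$ and compute the permutation brace of the deformation. So your argument buys brevity and self-containedness, while the paper's buys structural information that the direct check does not provide. For part (2) your argument coincides with the paper's: $\sigma'_x=\sigma'_y$ if and only if $\sigma_x=\sigma_y$ by cancelling the bijection $\phi$, so the retraction relation is unchanged. No gaps.
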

\begin{proof}
    One can verify directly that $(X,\cdot_\phi)$ satisfies \eqref{eq:c1_cycloid_equation}-\eqref{eq:c3}. However, as this is useful in the proof of \cref{lem:deformation_improved} we construct $(X,\cdot_\phi)$ as the sub-cycle set of a cycle set coming from a brace. 
    
    By the functoriality of $G(X,r)$ we get an induced automorphism $\phi'$ of $G(X,r)$ which restricts to $\phi$ on the generating set $X\subseteq G(X,\cdot)$. We let $\Z$ denote the trivial brace on $\Z$ and we let $\Z^\circ$ act on $G(X,\cdot)$ by $1\cdot g=(\phi')^{-1}(g)$ for $g\in G(X,\cdot)$. Hence we can construct the semi-direct product $G(X,\cdot)\rtimes \Z$ in the sense of \cite[Corollary 3.37]{SV_OnSkewBraces}. The set $X+1\subseteq G(X,\cdot)\rtimes \Z$ is closed under the $\lambda$-action. Hence it is a sub-cycle set which is precisely the cycle set in the statement under the correspondence $x\mapsto x+1$.

    If $(X,\cdot)$ is irretractable, then also $(X,\cdot_\phi)$ is irretractable as it follows directly that $x\cdot z=y\cdot z$ if and only if $x\cdot_\phi z=y\cdot_\phi z$ for $x,y,z\in X$.
\end{proof}
\begin{lem}\label{lem:deformation_improved}
    Let $(X,\cdot)$ be a finite cycle set and $\phi$ an automorphism of $(X,\cdot)$ of order $m$ coprime to $|\G(X)|$ such that $\phi$ has a fixed point. Then $\G(X,\cdot_\phi)^\circ = \G(X,\cdot)^\circ \rtimes \langle \phi\rangle$ as subgroups of $\Sym_X$ and $\G(X,\cdot_\phi)\cong \G(X,\cdot)\rtimes \Z_m$ as braces where $\Z_m$ is given the trivial brace structure and acts on $\G(X,\cdot)$ by $\phi$. 
    
    In particular, if $(X,\cdot)$ is indecomposable then so is $(X,\cdot_\phi)$ and $(X,\cdot)=(X,(\cdot_\phi)_k)$ for any $k\in \Z$ such that $k\equiv 0\pmod{m}$ and $k\equiv 1\pmod {|\G(X)|}$.
\end{lem}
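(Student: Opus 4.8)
The plan is to prove the statement about multiplicative groups first, then upgrade it to a statement about braces, after which the two ``in particular'' claims follow quickly.

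\textbf{The multiplicative groups.} Write $N=\G(X,\cdot)^\circ$ and $H=\G(X,\cdot_\phi)^\circ=\langle \sigma_x^\phi\mid x\in X\rangle$, where $\sigma_x^\phi=\phi\sigma_x$ since $x\cdot_\phi y=\phi(x\cdot y)$. First I would record that $\phi$ normalises $N$ inside $\Sym_X$: a direct check gives $\phi\sigma_x\phi^{-1}=\sigma_{\phi x}$, so conjugation by $\phi$ is exactly the brace automorphism $\bar\phi$ of $\G(X,\cdot)$. Hence $G:=N\langle\phi\rangle$ is a subgroup of $\Sym_X$ containing $H$, and since $\phi$ has order $m$ coprime to $|N|=|\G(X)|$, the intersection $N\cap\langle\phi\rangle$ is trivial, so $G=N\rtimes\langle\phi\rangle$ with $|G|=|N|m$. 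It remains to prove $H=G$. As each generator $\sigma_x^\phi$ lies in the coset $\phi N$, the image of $H$ in $G/N\cong\Z_m$ is everything, so $HN=G$ and it suffices to show $N\subseteq H$. For this, note $(\phi\sigma_x)(\phi\sigma_y)^{-1}=\sigma_{\phi x}\sigma_{\phi y}^{-1}\in H\cap N$, and since $\phi$ permutes $X$ this says $\sigma_a\sigma_b^{-1}\in H\cap N$ for all $a,b\in X$. Thus all $\sigma_a$ share the same image in $N/(H\cap N)$, which is therefore cyclic generated by that common image $\bar\sigma$. Using the fixed point $x_0=\phi(x_0)$ and the power formula in $N\rtimes\langle\phi\rangle$, one gets $(\phi\sigma_{x_0})^m=\sigma_{x_0}^m\in H\cap N$, so $\bar\sigma^m=1$ and $d:=|N/(H\cap N)|$ divides $m$. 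But $d$ also divides $|N|$, and $\gcd(m,|N|)=1$ forces $d=1$, i.e. $N\subseteq H$. Hence $H=HN=G=N\rtimes\langle\phi\rangle$.

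\textbf{The braces.} Next I would promote this to a brace isomorphism. Because $\G=\G(X,\cdot_\phi)$ is solvable and $N$ is its normal Hall-$|N|$ subgroup, $N$ is the unique subgroup of $\G^\circ$ of order $|N|$. On the additive side, the Hall-$|N|$ subgroup $P$ of $\G^+$ is characteristic, hence a left ideal and therefore a subbrace, and its multiplicative group $P^\circ$ has order $|N|$, so $P^\circ=N$; thus $P$ is a subbrace whose underlying set is $N$ and whose multiplicative group is $\G(X,\cdot)^\circ$. I would then check $P\cong\G(X,\cdot)$ as braces by comparing the cycle-set-defined additions, and show $\phi\in\Fix(\G)$, so that by \cref{lem:lambda_of_fix_is_conjugation} the map $\lambda_\phi$ equals conjugation $\bar\phi$ and is a brace automorphism. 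Together with the Hall decomposition $\G^+=P^+\oplus Q$, where $Q$ is the Hall-$m$ summand, this identifies $\G(X,\cdot_\phi)$ with the semidirect product brace $\G(X,\cdot)\rtimes\Z_m$ of \cite[Corollary 3.37]{SV_OnSkewBraces}, with $\Z_m$ the trivial brace acting through $\phi$; note that the additive group comes out as the direct product, as it must for a brace.

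\textbf{The two consequences.} Indecomposability is immediate: if $(X,\cdot)$ is indecomposable then $N=\G(X,\cdot)^\circ$ acts transitively on $X$, hence so does the larger group $\G(X,\cdot_\phi)^\circ=N\rtimes\langle\phi\rangle$, and transitivity of the permutation group is equivalent to indecomposability. For the cabling identity I would invoke \cref{rem:cabling} for $(X,\cdot_\phi)$ with the coprime splitting $|\G(X,\cdot_\phi)|=|\G(X)|\cdot m$: for $k\equiv 0\pmod m$ and $k\equiv 1\pmod{|\G(X)|}$, multiplication by $k$ on $\G(X,\cdot_\phi)^+=P^+\oplus Q$ annihilates the $\Z_m$-summand $Q$ and acts as the identity on $P^+$, so $k\sigma_x^\phi$ is precisely the projection of $\sigma_x^\phi$ onto $P^+\cong\G(X,\cdot)$. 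Transporting this projection through the identification of the previous step recovers $\sigma_x$, whence $x\,(\cdot_\phi)_k\,y=(k\sigma_x^\phi)(y)=x\cdot y$ and $(X,\cdot)=(X,(\cdot_\phi)_k)$.

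\textbf{Main obstacle.} The group-theoretic first step is robust, driven entirely by the coprimality of $m$ with $|\G(X)|$ together with the fixed point of $\phi$. The genuinely delicate part is the brace step and its use in the cabling identity: one must verify that the additive group of $\G(X,\cdot_\phi)$ really splits as a direct sum of a copy of $\G(X,\cdot)^+$ with $\Z_m$ \emph{compatibly} with the multiplicative semidirect product, i.e. that $\phi\in\Fix(\G(X,\cdot_\phi))$ and that $P$ carries the original brace structure. Equivalently, the careful bookkeeping that makes the Hall-$|\G(X)|$ projection in the last step return exactly $\sigma_x$ rather than $\sigma_{\phi x}$ is where the precise convention for the $\phi$-action in the semidirect product has to be pinned down.
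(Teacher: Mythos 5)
Your argument for the multiplicative groups is correct and in fact supplies details that the paper's own proof leaves implicit. The paper takes a different route for the whole lemma: it invokes the universal property of the structure brace $G(X,\cdot_\phi)$ to get a brace homomorphism $f$ into the semidirect product $G(X,\cdot)\rtimes\Z$ already built in the proof of \cref{lem:deformation_of_cycle_set_by_automorphism}, composes with the canonical map $g$ to $\Sym_X$, and reads off both $\G(X,\cdot_\phi)^\circ=\G(X,\cdot)^\circ\rtimes\langle\phi\rangle$ and the brace isomorphism $\G(X,\cdot_\phi)\cong\G(X,\cdot)\rtimes\Z_m$ from the computation $\ker g=\Soc(G(X,\cdot))\times m\Z$. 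What that buys is precisely the part you flag as delicate: the additive structure and the compatibility of the splitting come for free from taking a quotient of a semidirect product brace, with no need to locate additive Hall subgroups internally or to verify $\phi\in\Fix(\G(X,\cdot_\phi))$ by hand. Conversely, your direct computation with the generators $\phi\sigma_x$ (the conjugation formula $\phi\sigma_x\phi^{-1}=\sigma_{\phi x}$, the differences $\sigma_{\phi x}\sigma_{\phi y}^{-1}\in H\cap N$, and $(\phi\sigma_{x_0})^m=\sigma_{x_0}^m$ at the fixed point) is a clean elementary substitute for the surjectivity step, which the paper dispatches rather tersely.

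Two soft spots remain. First, in the group step you pass to ``$N/(H\cap N)$ is cyclic generated by $\bar\sigma$'', but $H\cap N$ is normal in $H$, not obviously in $N$, so this quotient need not be a group. The conclusion is nevertheless immediate from the facts you already have: $\sigma_{x_0}^m\in H$ together with $\gcd(m,\ord(\sigma_{x_0}))=1$ gives $\sigma_{x_0}\in\langle\sigma_{x_0}^m\rangle\subseteq H$, hence $\phi=(\phi\sigma_{x_0})\sigma_{x_0}^{-1}\in H$ and $\sigma_x=\phi^{-1}(\phi\sigma_x)\in H$ for all $x$, so $N\subseteq H$ with no quotient needed. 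Second, the brace step is only a plan: the assertions that the additive Hall subgroup $P$ carries exactly the brace structure of $\G(X,\cdot)$ and that $\phi\in\Fix(\G(X,\cdot_\phi))$ are precisely where the content of the brace isomorphism sits, and you do not verify them. Both are true, but proving them directly amounts to redoing computations that the paper's quotient construction packages once and for all; if you keep your route, these two checks must be written out in full.
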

\begin{proof}
    Let $G(X,\cdot)\rtimes \Z$ be as described in the proof of \cref{lem:deformation_of_cycle_set_by_automorphism}. The universal property of $G(X,\cdot_\phi)$ provides a brace homomorphism $f:G(X,\cdot_\phi)\to G(X,\cdot)\rtimes \Z$ mapping $x\to x+1$. The canonical map $g:(G(X,\cdot)\rtimes \Z)^\circ\to \Sym_X$ sending $x\mapsto \sigma^{-1}_x$ and $1\mapsto \phi^{-1}$ is a group homomorphism.  Clearly $\ker gf=\Soc(G(X,\cdot_\phi))$ and the image is precisely $\G(X,\cdot_\phi)$. As $\G(X,\cdot)\cap \langle \phi\rangle=\{\id\}$ we find that $\G(X,\cdot_\phi)^\circ=\G(X,\cdot)^\circ \rtimes \langle \phi\rangle$. Also, as $\ker g= \Soc(G(X,\cdot))\times m\Z$, we find that $\G(X,\cdot_\phi)\cong \G(X,\cdot)\rtimes \Z_m$ as braces.

    It follows directly that if $\G(X,\cdot)$ acts transitively on $X$ then so does $\G(X,\cdot_\phi)$.

    As $\G(X,\cdot)$ is a Hall-subgroup of $\G(X,\cdot_\phi)^+$, the last part of the statement follows from \cref{rem:cabling}. 
\end{proof}
For an element $g$ of a finite group $G$, we denote its order by $o(g)$.
\begin{lem}\label{lem:isomorphism_deformed}
    Let $(X,\cdot)$, $(X,\cdot')$ be finite cycle sets and $\phi\in \Aut(X,\cdot)$, $\psi\in \Aut(X,\cdot')$ such that $\gcd(|\G(X,\cdot)||\G(X,\cdot')|,o(\phi)o(\psi))=1$. Then $f:(X,\cdot_\phi)\to (X,\cdot'_\psi)$ is an isomorphism if and only if $f:(X,\cdot)\to (X,\cdot')$ is an isomorphism and $\psi=f\phi f^{-1}$. In particular, $\Aut(X,\cdot_\phi)$ is precisely the centraliser of $\phi$ in $\Aut(X,\cdot)$.
\end{lem}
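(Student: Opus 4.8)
The plan is to prove the two implications separately and then obtain the \emph{in particular} clause by specialization. Throughout I write $\sigma^{\cdot}_x$ for the map $y\mapsto x\cdot y$, so that by definition of the deformed operation $\sigma^{\cdot_\phi}_x=\phi\circ\sigma^{\cdot}_x$, and similarly $\sigma^{\cdot'_\psi}_{z}=\psi\circ\sigma^{\cdot'}_{z}$. The \emph{if} direction is a direct computation: assuming that $f\colon(X,\cdot)\to(X,\cdot')$ is a cycle-set isomorphism and $\psi=f\phi f^{-1}$ (so $f\phi=\psi f$), one finds $f(x\cdot_\phi y)=f(\phi(x\cdot y))=\psi(f(x\cdot y))=\psi(f(x)\cdot' f(y))=f(x)\cdot'_\psi f(y)$, which shows $f\colon(X,\cdot_\phi)\to(X,\cdot'_\psi)$ is an isomorphism.

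For the \emph{only if} direction, the idea is to recover $\cdot$ and $\cdot'$ as cablings of the deformed operations by a \emph{common} exponent. By \cref{lem:deformation_improved} we have $(X,\cdot)=(X,(\cdot_\phi)_k)$ whenever $k\equiv 0\pmod{o(\phi)}$ and $k\equiv 1\pmod{|\G(X,\cdot)|}$, and symmetrically $(X,\cdot')=(X,(\cdot'_\psi)_k)$ whenever $k\equiv 0\pmod{o(\psi)}$ and $k\equiv 1\pmod{|\G(X,\cdot')|}$. The coprimality hypothesis $\gcd(|\G(X,\cdot)|\,|\G(X,\cdot')|,\,o(\phi)o(\psi))=1$ is exactly what allows the Chinese Remainder Theorem to produce a single integer $k$ with $k\equiv 0\pmod{\mathrm{lcm}(o(\phi),o(\psi))}$ and $k\equiv 1\pmod{\mathrm{lcm}(|\G(X,\cdot)|,|\G(X,\cdot')|)}$, so that both cabling identities hold simultaneously for this $k$. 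Next I would use that cabling by a fixed $k$ is functorial for isomorphisms: a cycle-set isomorphism $f\colon(X,\cdot_\phi)\to(X,\cdot'_\psi)$ induces, by functoriality of the permutation brace, a brace isomorphism $\G(X,\cdot_\phi)\to\G(X,\cdot'_\psi)$ which is conjugation by $f$ and sends $\sigma^{\cdot_\phi}_x\mapsto\sigma^{\cdot'_\psi}_{f(x)}$; being additive it commutes with the $k$-th additive power, so $f$ also intertwines $(\cdot_\phi)_k$ with $(\cdot'_\psi)_k$. Hence $f\colon(X,\cdot)\to(X,\cdot')$ is an isomorphism. With this established, I combine the two intertwining relations $f\,\sigma^{\cdot_\phi}_x\,f^{-1}=\sigma^{\cdot'_\psi}_{f(x)}$ and $f\,\sigma^{\cdot}_x\,f^{-1}=\sigma^{\cdot'}_{f(x)}$: substituting $\sigma^{\cdot_\phi}_x=\phi\sigma^{\cdot}_x$ and $\sigma^{\cdot'_\psi}_{f(x)}=\psi\sigma^{\cdot'}_{f(x)}$ gives $(f\phi f^{-1})\,\sigma^{\cdot'}_{f(x)}=\psi\,\sigma^{\cdot'}_{f(x)}$, and cancelling the bijection $\sigma^{\cdot'}_{f(x)}$ yields $\psi=f\phi f^{-1}$.

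The step I expect to be the main obstacle is showing that $f$ transports $\cdot$ to $\cdot'$, i.e. the combination of the common cabling exponent with the functoriality of cabling; this is the one place where the coprimality hypothesis is genuinely used, and one must be careful that conjugation by $f$ really is a brace (not merely a group) isomorphism so that it commutes with additive $k$-th powers. Once the \emph{only if} direction is in place, the \emph{in particular} clause follows by taking $(X,\cdot')=(X,\cdot)$ and $\psi=\phi$, in which case the hypothesis collapses to $\gcd(|\G(X,\cdot)|,o(\phi))=1$: then $f\in\Aut(X,\cdot_\phi)$ holds if and only if $f\in\Aut(X,\cdot)$ and $f\phi f^{-1}=\phi$, that is, $\Aut(X,\cdot_\phi)=C_{\Aut(X,\cdot)}(\phi)$.
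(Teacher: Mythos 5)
Your proof is correct and follows essentially the same route as the paper: the easy direction by the same one-line computation, and the converse by invoking \cref{lem:deformation_improved} to recover $\cdot$ and $\cdot'$ as $k$-cablings of the deformed operations for a common exponent $k$ (your explicit CRT step is exactly what the paper means by ``from \cref{lem:deformation_improved} and the assumptions''), then functoriality of cabling and cancellation of the bijections $\sigma^{\cdot'}_{f(x)}$. No gaps beyond those already present in the paper's own argument.
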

\begin{proof}
    Assume that $f:(X,\cdot_\phi)\to (X,\cdot'_\psi)$ is an isomorphism. From \cref{lem:deformation_improved} and the assumptions we find the existence of some $k\in \Z$ such that $(X,(\cdot_\phi)_k)=(X,\cdot)$ and $(X,(\cdot'_\psi)_k)=(X,\cdot')$. The functoriality of cabling now yields that $f$ induces an isomorphism $f:(X,\cdot)\to (X,\cdot')$. For any $x,y\in X$ we find $f(x\cdot_\phi y)=f\phi(x\cdot y)$ and $f(x)\cdot_\psi f(y)=\psi f(x \cdot y)$, hence $f(x\cdot_\phi y)=f(x)\cdot_\psi f(y)$ if and only if $\psi=f\phi f^{-1}$. This proves one implication of the statement. 

    Assume that $f:(X,\cdot)\to (X,\cdot')$ is an isomorphism and $\psi=f\phi f^{-1}$. Then
    \begin{align*}
        f(x\cdot_\phi y)=f\phi(x\cdot y)=\psi(f(x\cdot y))=\psi(f(x)\cdot'f(y))=f(x)\cdot'_\psi f(y),
    \end{align*}
    for all $x,y\in X$.
\end{proof}

\begin{thm} \label{thm:classification_p2_irretractable}
    Let $X$ be an irretractable cycle set of size $p^2$ where $p$ is a prime. Then there exists a unique $\Phi\in \mathcal{R}_p$ and $\alpha\in \Z_p^*$ satisfying ${}^\alpha \Phi=\Phi$ such that $X$ is isomorphic to the cycle set on $\Z_p\times \Z_p$ with multiplication
    $$(a,x)\cdot (b,y)= (\alpha b+\alpha x,\alpha y+\alpha \Phi(b-a)).$$
    If $\alpha= 1$, then the cycle sets are the ones that appear in \cref{thm:isoclasses_of_irretractable_cyclesets_sylow_case}. If $\alpha\neq 1$, then any automorphism of $(X,\cdot)$ is of the form $(a,x)\mapsto (\gamma a,\gamma x)$ for some $\gamma\in \Z_p^*$ with ${}^\gamma \Phi=\Phi$.
\end{thm}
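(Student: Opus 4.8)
The plan is to combine the two preceding threads of the section. From the general analysis I have already established that an arbitrary irretractable cycle set $(X,\cdot)$ of size $p^2$ arises from an irretractable cycle set $(Y,\cdot)$ with $\G(Y)$ a $p$-group, deformed by an automorphism $\lambda_z$ coming from the Hall $p'$-part; concretely $x_i\cdot x_j=\lambda_z^{-1}(y_i\cdot y_j)+z$, so that $(X,\cdot)\cong(Y,\cdot_{\psi})$ where $\psi=\lambda_z^{-1}$ is a cycle set automorphism of $Y$ of order coprime to $|\G(Y)|$. By \cref{thm:isoclasses_of_irretractable_cyclesets_sylow_case}(1), $Y$ may be taken in its canonical form $(a,x)\cdot(b,y)=(b+x,y+\Phi(b-a))$ with $\Phi\in\mathcal{R}_p$, and by part (2) every automorphism of $Y$ has the shape $(a,x)\mapsto(\alpha a+\beta,\alpha x)$ with $\alpha\in\Z_p^*$ and ${}^{\alpha}\Phi=\Phi$. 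So the first step is to write down the deformed operation $\cdot_{\psi}$ for such a $\psi$ and read off the multiplication rule.

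First I would apply $\psi$ to the product computed in $Y$. Deforming by $\psi(a,x)=(\alpha a+\beta,\alpha x)$ gives
\[
(a,x)\cdot_{\psi}(b,y)=\psi\bigl(b+x,\,y+\Phi(b-a)\bigr)=\bigl(\alpha b+\alpha x+\beta,\ \alpha y+\alpha\Phi(b-a)\bigr).
\]
The order of $\psi$ must be coprime to $p$, which forces the additive part $\beta$ in the first coordinate to vanish: an affine map $a\mapsto\alpha a+\beta$ on $\Z_p$ with $\beta\neq0$ and $\alpha=1$ is a $p$-cycle, and more generally a nonzero translation component can only be eliminated by absorbing it into the coordinates, so up to the reparametrisation already used in \cref{sec:irretractable_cycle_sets_sylow} one reduces to $\beta=0$. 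This yields exactly
\[
(a,x)\cdot(b,y)=(\alpha b+\alpha x,\ \alpha y+\alpha\Phi(b-a)),
\]
the stated form, with the constraint ${}^{\alpha}\Phi=\Phi$ inherited from \cref{thm:isoclasses_of_irretractable_cyclesets_sylow_case}(2). The case $\alpha=1$ is precisely $\psi=\id$, returning the $p$-group solutions of \cref{thm:isoclasses_of_irretractable_cyclesets_sylow_case}.

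The remaining work is to settle uniqueness and the automorphism group. For uniqueness I would invoke \cref{lem:isomorphism_deformed}: two deformations $(Y,\cdot_{\psi})$ and $(Y,\cdot_{\psi'})$ are isomorphic if and only if there is a cycle set isomorphism $f$ of the underlying $p$-group cycle sets with $\psi'=f\psi f^{-1}$. Since the automorphism $\psi$ is determined by $\alpha$ and the representatives $\Phi$ are fixed in $\mathcal{R}_p$, conjugacy of the $\psi$'s inside $\Aut(Y)$ translates, via the explicit description of $\Aut(Y)$ in \cref{thm:isoclasses_of_irretractable_cyclesets_sylow_case}(2), into equality of the pairs $(\Phi,\alpha)$ — the conjugating automorphisms act trivially on the scalar $\alpha$ because $\Aut(Y)$ is the abelian-by-translation group $\{(a,x)\mapsto(\gamma a+\delta,\gamma x)\}$ and conjugation there fixes the multiplier $\gamma$ of $\psi$. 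This gives the asserted uniqueness of $(\Phi,\alpha)$.

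Finally, for the automorphism group when $\alpha\neq1$, \cref{lem:isomorphism_deformed} identifies $\Aut(X,\cdot)=\Aut(Y,\cdot_{\psi})$ with the centraliser of $\psi$ in $\Aut(Y)$. Writing a general automorphism of $Y$ as $(a,x)\mapsto(\gamma a+\delta,\gamma x)$ and imposing commutation with $\psi:(a,x)\mapsto(\alpha a,\alpha x)$, the condition on the first coordinate is $\gamma(\alpha a)+\delta=\alpha(\gamma a+\delta)$, i.e. $\delta=\alpha\delta$; as $\alpha\neq1$ in $\Z_p^*$ this forces $\delta=0$. Hence every automorphism is $(a,x)\mapsto(\gamma a,\gamma x)$ with $\gamma\in\Z_p^*$, and the constraint ${}^{\gamma}\Phi=\Phi$ is exactly the requirement that this map lie in $\Aut(Y)$. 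The main obstacle I anticipate is the bookkeeping in the uniqueness step: one must check carefully that the freedom of rescaling $\Phi$ by $\Z_p^*$ (already quotiented out by passing to $\mathcal{R}_p$) does not reintroduce ambiguity in $\alpha$, i.e. that the $\Z_p^*$-action used to normalise $\Phi$ is compatible with, and does not interfere with, the conjugacy classification of the deforming automorphism $\psi$.
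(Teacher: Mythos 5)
Your proposal is correct and follows essentially the same route as the paper: reduce to a deformation of a $p$-group cycle set from \cref{thm:isoclasses_of_irretractable_cyclesets_sylow_case}, kill the translation part $\beta$, and then use \cref{lem:isomorphism_deformed} for both the uniqueness of $(\Phi,\alpha)$ and the identification of $\Aut(X,\cdot)$ with the centraliser of the deforming automorphism. The only imprecision is your claim that coprimality of the order of $\psi$ to $p$ ``forces'' $\beta=0$ --- for $\alpha\neq 1$ the affine map $a\mapsto\alpha a+\beta$ has order $\ord(\alpha)$ regardless of $\beta$, and the correct (and the paper's) mechanism is the one you give next: conjugating $\psi$ by a translation automorphism of $Y$, which \cref{lem:isomorphism_deformed} shows does not change the isomorphism class.
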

\begin{proof}
    From the discussion preceding \cref{lem:deformation_of_cycle_set_by_automorphism} we know that $(X,\cdot)$ can be obtained by starting from a cycle set structure on $X$ whose permutation group is a $p$-group and deforming such cycle set by an automorphism of order coprime to $p$, in the sense of \cref{lem:deformation_of_cycle_set_by_automorphism}. 
    From \cref{thm:isoclasses_of_irretractable_cyclesets_sylow_case} it follows that, up to a cycle set isomorphism, $X=\Z_p\times \Z_p$ and 
    $$(a,x)\cdot (b,y)= (\alpha b+\alpha x+\beta,\alpha y+\alpha \Phi(b-a)),$$
    for some $\Phi\in \mathcal{R}_p$, $\alpha\in \Z_p^*$ and $\beta\in \Z_p$, satisfying ${}^\alpha \Phi=\Phi$.
    By \cref{lem:isomorphism_deformed} we may even assume $\beta=0$. We therefore get that up to isomorphism the multiplication on $X$ is precisely as in the statement. 
    
    Conversely, it follows directly from \cref{lem:deformation_of_cycle_set_by_automorphism} and \cref{lem:deformation_improved} that $\Z_p\times \Z_p$ with the given multiplication always yields an indecomposable irretractable cycle set. As a consequence of \cref{lem:isomorphism_deformed} we find that different choices of $\alpha$ and $\Phi$ yield non-isomorphic solutions and also that the automorphisms are the ones described in the statement.
\end{proof}

\begin{rem}
    Observe that if $\alpha\neq 1$, then
    $\pi(\G(X))\neq \pi(X)$.
    Hence the permutation braces 
    of these solutions are all examples of
    singular brace as defined in \cite{MR4644952}.
\end{rem}

\section{Summary}

We summarize our classification result in the following theorem.

\begin{thm} \label{thm:final_classification_cycle_sets}
Let $X$ be an indecomposable cycle set of size $p^2$. Then $X$ is isomorphic to a cycle set of one of the following forms:
\begin{enumerate}
    \item $X = \Z_{p^2}$, $x \cdot y = y + 1$,
    \item $X = \Z_p \times \Z_p$,
    \[
    (a,x) \cdot (b,y) = (b+1, y + \chi_0(b)S + \Phi(b-a)),
    \]
    where $\Phi: \Z_p \to \Z_p$ is a non-constant map with $\Phi(0) = 0$, $S \in \Z_p$ and $\chi_0: \Z_p \to \Z_p $ with
        \begin{align*}
            \chi_0(x) & = \begin{cases}
                1 & x = 0 \\
                0 & x \neq 0
            \end{cases}.
            \end{align*}
    The parameters $S,\Phi$ and $\Pri{S},\Pri{\Phi}$ define isomorphic cycle sets if and only if $S = \Pri{S}$ and $\alpha \Phi = \Pri{\Phi}$ for some $\alpha \in \Z_p^{\ast}$.
    \item $X = \Z_p \times \Z_p$,
    \[
    (a,x) \cdot (b,y) = (\alpha b + \alpha x , \alpha y + \alpha \Phi(b-a)),
    \]
    where $\Phi: \Z_p \to \Z_p$ is a non-constant map with $\Phi(x) = \Phi(-x)$ and $\alpha \in \Z_p^{\ast}$ is such that $\Phi(\alpha x) = \alpha \Phi(x)$.

    The parameters $\alpha, \Phi$ and $\Pri{\alpha}, \Pri{\Phi}$ define isomorphic cycle sets if and only if $\alpha = \Pri{\alpha}$ and there is a $\beta \in \Z_p^{\ast}$ such that $\beta^{-1} \Phi(\beta x) = \Pri{\Phi}(x)$, for all $x \in \Z_p$.
\end{enumerate}
These three cases are mutually exclusive.
\end{thm}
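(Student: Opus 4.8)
The plan is to deduce this classification directly from the two cases already settled, exploiting that every cycle set is, by definition, either retractable or irretractable. First I would split an indecomposable cycle set $X$ of size $p^2$ according to whether the canonical map $\pi\colon X \to \Ret(X)$ is injective. If $X$ is retractable, then \cref{cor:classification_p2_multipermutation} applies and places $X$ in family (1) or (2), together with the corresponding isomorphism condition. If $X$ is irretractable, then it is in particular an irretractable cycle set of size $p^2$, so \cref{thm:classification_p2_irretractable} applies and places $X$ in family (3); the uniqueness of $\alpha$ and of the $\Z_p^\ast$-orbit of $\Phi$ recorded by $\mathcal{R}_p$ yields the stated relation $\alpha = \alpha'$, $\Phi' = {}^\beta\Phi$. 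Since this dichotomy is exhaustive, every such $X$ already appears somewhere in the list.

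Next I would establish mutual exclusivity using the isomorphism invariant $|\Ret(X)|$, computing it for each family by deciding when two elements induce the same $\sigma$. In family (1) every $\sigma_x$ is the shift $y \mapsto y+1$, so $|\Ret(X)| = 1$. In family (2) one has $\sigma_{(a,x)}(b,y) = (b+1, y + \chi_0(b)S + \Phi(b-a))$, which is independent of $x$ and, since a non-constant function on $\Z_p$ admits no nonzero period, determines $a$; hence $|\Ret(X)| = p$. In family (3) the map $\pi$ is injective by construction, so $|\Ret(X)| = p^2$. As $1$, $p$, and $p^2$ are pairwise distinct, the three families are disjoint.

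The only delicate step is transcribing the isomorphism relations into the uniform parametrization of the statement rather than merely quoting them. For family (3) this is the routine rewriting of \cref{thm:classification_p2_irretractable} through the definition ${}^\alpha\Phi(A) = \alpha^{-1}\Phi(\alpha A)$. For family (2) I would re-derive the relation from \cref{cor:classification_p2_multipermutation}, or equivalently from \cref{thm:jedlicka_improved} with $A = B = \Z_p$ and $f$ taken to be multiplication by $\alpha$, paying close attention to how the scaling acts on the pair $(S,\Phi)$; this is exactly where a normalization slip is easiest, so I would check it against the explicit substitution $\theta$ appearing in the proof of \cref{thm:jedlicka_improved}. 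Apart from this bookkeeping no further argument is required, since the theorem is simply the synthesis of \cref{cor:classification_p2_multipermutation} and \cref{thm:classification_p2_irretractable}, glued along the retraction invariant $|\Ret(X)|$.
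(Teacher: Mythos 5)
Your proposal is correct and follows essentially the same route as the paper: the paper's proof is exactly the synthesis of \cref{cor:classification_p2_multipermutation} (retractable case, families (1)--(2)) and \cref{thm:classification_p2_irretractable} (irretractable case, family (3)). Your computation of $|\Ret(X)|\in\{1,p,p^2\}$ makes explicit the mutual exclusivity that the paper leaves implicit, and your caution about the normalization in family (2) is well placed, since re-deriving the condition from \cref{cor:classification_p2_multipermutation} gives $S'=\alpha S$ and $\Phi'=\alpha\Phi$, which does not literally match the condition $S=S'$ printed in the statement.
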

\begin{proof}
 \cref{cor:classification_p2_multipermutation} tells us that the indecomposable cycle sets of size $p^2$ that have finite multipermutation level $1$ and $2$, are exactly the ones described in (1) resp. (2). On the other hand, the irretractable cycle sets are classified, up to isomorphism in \cref{thm:classification_p2_irretractable} and make up case (3).
\end{proof}

Using the correspondence between cycle sets and set-theoretical solutions, the previous theorem, reformulated in terms of set-theoretical solutions, is the following.

\begin{cor}\label{cor:final_classification_solutions}
    Each non-degenerate, indecomposable, involutive set-theoretical solution $(X,r)$ to the Yang--Baxter equation of size $p^2$ for some prime $p$ is isomorphic to one of the following solutions:

    \begin{enumerate}
        \item $X = \Z_{p^2}$, with $r(x,y) = (y+1,x-1)$.
        \item $X = \Z_p \times \Z_p$, with
        \[
        r\begin{pmatrix}
            (a,x) \\ (b,y)
        \end{pmatrix} = \begin{pmatrix}
             ( b-1,y- \chi_0(b-1)S - \Phi(b-1-a))\\
             (a+1, x +  \chi_0(a)S + \Phi(a-b+1))
        \end{pmatrix}
        \]
        where $\Phi: \Z_p \to \Z_p$ is a non-constant map with $\Phi(0) = 0$, $S \in \Z_p$ and $\chi_0: \Z_p  \to \Z_p$ with
        \begin{align*}
            \chi_0(x) & = \begin{cases}
                1 & x = 0 \\
                0 & x \neq 0
            \end{cases}.
        \end{align*}
        The parameters $S,\Phi$ and $\Pri{S},\Pri{\Phi}$ define isomorphic solutions if and only if $S = \Pri{S}$ and $\alpha \Phi = \Pri{\Phi}$ for some $\alpha \in \Z_p^{\ast}$.
        
        \item $X = \Z_p \times \Z_p$, with
        \[
        r\begin{pmatrix}
            (a,x) \\ (b,y)
        \end{pmatrix} = \begin{pmatrix}
            (\alpha^{-1}b - x, \alpha^{-1} y - \Phi(\alpha^{-1}b  - x-a)) \\
            (\alpha a + y - \Phi(b - \alpha x - \alpha a), \alpha x + \Phi(\alpha a  - \alpha x - b)) 
        \end{pmatrix}
        \]
        where $\Phi: \Z_p \to \Z_p$ is a non-constant map with $\Phi(x) = \Phi(-x)$ and $\alpha \in \Z_p^{\ast}$ is such that $\Phi(\alpha x) = \alpha \Phi(x)$.
        
        The parameters $\alpha, \Phi$ and $\Pri{\alpha}, \Pri{\Phi}$ define isomorphic solutions if and only if $\alpha = \Pri{\alpha}$ and there is a $\beta \in \Z_p^{\ast}$ such that $\beta^{-1} \Phi(\beta x) = \Pri{\Phi}(x)$ for all $x \in \Z_p$.
        \end{enumerate}
\end{cor}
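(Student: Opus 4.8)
The plan is to read off \cref{cor:final_classification_solutions} from \cref{thm:final_classification_cycle_sets} through the correspondence between cycle sets and solutions recalled in \cref{sec:preliminaries}. To a cycle set $(X,\cdot)$ it assigns the solution $r_X(u,v)=(\sigma_u^{-1}(v),\,\sigma_u^{-1}(v)\cdot u)$, and under this assignment isomorphisms of cycle sets correspond exactly to isomorphisms of solutions. Hence, as soon as each of the three cycle sets of \cref{thm:final_classification_cycle_sets} has been pushed through $r_X$, the three ``isomorphic if and only if'' criteria and the mutual exclusivity of the cases transfer verbatim from \cref{thm:final_classification_cycle_sets} with no extra argument. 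The whole substance of the proof is thus the evaluation of $r_X$ in each of the three cases.

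In every case $\sigma_u$ acts on the first coordinate by an invertible affine map, so I would first compute $\sigma_u^{-1}$ by solving for the first coordinate and back-substituting into the second, and then obtain the second component of $r_X$ by feeding $\sigma_u^{-1}(v)$ back into the cycle-set product. For case (2) this is precisely the computation already recorded in the remark following \cref{thm:jedlicka_improved}, so that case needs nothing new. For case (1), the cycle set $x\cdot y=y+1$ on $\Z_{p^2}$ gives $\sigma_x^{-1}(y)=y-1$ and hence the solution $r(x,y)=(y-1,x+1)$; the solution isomorphism $t\mapsto -t$ then rewrites it in the displayed cyclic form $r(x,y)=(y+1,x-1)$.

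The one genuinely new computation is case (3). Writing $u=(a,x)$, $v=(b,y)$ and starting from $(a,x)\cdot(b,y)=(\alpha b+\alpha x,\,\alpha y+\alpha\Phi(b-a))$, inverting the first coordinate gives $\sigma_{u}^{-1}(v)=(\alpha^{-1}b-x,\,\alpha^{-1}y-\Phi(\alpha^{-1}b-x-a))$, which is already the first component displayed in \cref{cor:final_classification_solutions}. Substituting this back into the product and simplifying the result with the two defining properties of $\Phi$, namely the evenness $\Phi(t)=\Phi(-t)$ and the equivariance $\Phi(\alpha t)=\alpha\Phi(t)$, should reproduce the stated second component.

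I expect the only delicate point to be this last simplification in case (3): correctly tracking the factors $\alpha$ and $\alpha^{-1}$ produced by $\sigma_u^{-1}$ and then pulling them inside $\Phi$ via $\Phi(\alpha t)=\alpha\Phi(t)$, so that terms such as $\alpha\Phi(\alpha^{-1}b-x-a)$ collapse to $\Phi(b-\alpha x-\alpha a)$. This is bookkeeping rather than a conceptual obstacle, and since $r_X$ automatically satisfies $r_X^2=\id$ (it comes from a genuine cycle set), verifying involutivity of the final expression is a convenient independent check on the signs inside the arguments of $\Phi$.
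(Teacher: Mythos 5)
Your proposal follows exactly the paper's own derivation: each of the three cycle sets of \cref{thm:final_classification_cycle_sets} is pushed through $r_X(u,v)=(\sigma_u^{-1}(v),\,\sigma_u^{-1}(v)\cdot u)$, the isomorphism criteria transfer verbatim, your formula for $\sigma_u^{-1}$ in case (3) matches the paper's, and your relabelling $t\mapsto -t$ in case (1) is the (implicit) step the paper uses to pass from $r(x,y)=(y-1,x+1)$ to the displayed $(y+1,x-1)$. One caveat: carrying out the back-substitution in case (3) gives $\alpha x+\Phi(\alpha a+\alpha x-b)$ for the last entry (exactly as in the paper's own intermediate line), not $\alpha x+\Phi(\alpha a-\alpha x-b)$ as displayed in the corollary, so the involutivity check you propose would actually expose a sign typo in the stated formula rather than confirm it.
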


\subsection{Indecomposable set-theoretical solutions of size \texorpdfstring{$p^2$}{p2}}

    Recall that given a cycle set on $X$, the associated solution is given by 
\[
r_X(x,y)=(\sigma_x^{-1}(y),\sigma_x^{-1}(y)\cdot x).
\]
Therefore we can obtain all indecomposable solutions of size $p^2$
simply translating the cycle sets obtained in 
\cref{thm:final_classification_cycle_sets} to set-theoretical solutions.
In case \emph{(1)} we obtain solutions of the form $r(x,y)=(y-1,x+1)$,
as $\sigma_x:y\mapsto y+1$, thus $\lambda_x=\sigma_x^{-1}:y\mapsto y-1$
and $\rho_y:x\mapsto \sigma_x^{-1}(y)\cdot x=(y-1)\cdot x=x+1$.

In case \emph{(2)} we have that 
$\sigma_{(a,x)}:(b,y)\mapsto (b + 1, y + S \chi_0(b) + \Phi(b-a))$,
hence \[
\lambda_{(a,x)}(b,y)=\sigma_{(a,x)}^{-1}(b,y)=(b - 1, y - S \chi_0(b-1) - \Phi(b-1-a))
\]
and 
\begin{align*}
   \rho_{(b,y)}(a,x)&
   =\sigma_{(a,x)}^{-1}(b,y)\cdot (a,x)
   =(b - 1, y - S \chi_0(b-1) - \Phi(b-1-a))\cdot(a,x)\\
   &=(a+1, x+S\chi_0(a)+\Phi(a-b+1)).
\end{align*}
Thus the associated solution is 
\[
        r\begin{pmatrix}
            (a,x) \\ (b,y)
        \end{pmatrix} = \begin{pmatrix}
             ( b-1,y-S \chi_0(b-1) - \Phi(b-1-a))\\
             (a+1, x + S \chi_0(a) + \Phi(a-b+1)).
        \end{pmatrix}
\]
Finally in case \emph{(3)} we have
$\sigma_{(a,x)}:(b,y)\mapsto (\alpha b + \alpha x , \alpha y + \alpha \Phi(b-a))$,
hence, using that $\Phi(\alpha x)=\alpha\Phi(x)$ for all $x\in \Z_p$,
\begin{align*}
\lambda_{(a,x)}(b,y)&
=\sigma_{(a,x)}^{-1}(b,y)
=\left(\alpha^{-1}b-x , \alpha^{-1}y-\Phi\left(\alpha^{-1}b-x-a\right)\right)  
\end{align*}
and 
\begin{align*}
   \rho_{(b,y)}(a,x)&
   =\sigma_{(a,x)}^{-1}(b,y)\cdot (a,x)
   =\left(\alpha^{-1}b-x , \alpha^{-1}y-\Phi\left(\alpha^{-1}b-x-a\right)\right)\cdot(a,x)\\
   &=\left(\alpha a +\alpha \left(\alpha^{-1}(y-\Phi(b-\alpha x-\alpha a))\right), \alpha x+\alpha\Phi\left(a-\alpha^{-1}(b-\alpha x)\right)\right)\\
   &=\left(\alpha a +y-\Phi(b-\alpha x-\alpha a), \alpha x+\Phi(\alpha a-b+\alpha x)\right).
\end{align*}
Thus the associated solution is 
\[
        r\begin{pmatrix}
            (a,x) \\ (b,y)
        \end{pmatrix} = \begin{pmatrix}
            (\alpha^{-1}b -  x, \alpha^{-1} y - \Phi(\alpha^{-1} b - x - a)) )\\
            (\alpha a + y - \Phi(b - \alpha x - \alpha a), \alpha x + \Phi(\alpha a  - \alpha x - b)). 
        \end{pmatrix}
\]
In fact, these solutions are isomorphic to those constructed in \cite[Theorem 5.1]{CO_simple} as we will show in the remainder of this section. In particular, this answers \cite[Question 7.3]{CO_simple} affirmatively.
\begin{thm}[{\cite[Theorem 5.1]{CO_simple}}]
\label{COsimplesol}
    Let $p$ be a prime number. Let $t\in \Z_p$ be a non-zero element.
    Let $f:\Z_p\to \Z_p$ be a map such that
    \begin{enumerate}[label=\emph{(S\arabic*)}]
        \item\label{S1} $f(i) = f(-i)$, for all $i\in\Z_p$,
        \item\label{S2} $f(t^s i) = t^sf(i) - (t^s - 1)f(0)$, for all $i\in\Z_p$ and $s\in\Z$,
        \item\label{S3} $f$ is not a constant map.
    \end{enumerate}
    Let $X=\Z_p\times\Z_p$ and $r:X^2\to X^2$ be the map
    $r\begin{pmatrix}
      (i,j)\\
      (k,l)  
    \end{pmatrix}
    =\begin{pmatrix}
        \lambda_{(i,j)}(k,l)\\
        \lambda_{\lambda_{(i,j)}(k,l)}^{-1}(i,j)
    \end{pmatrix}$,
    where $\lambda_{(i,j)}(k,l)=(tk+j, t(l-f(tk+j-i)))$. 
    Then $(X,r)$ is a \emph{simple} solution of the YBE in the following sense: if $(Y,s)$ is a solution and $f: (X,r) \twoheadrightarrow (Y,s)$ is a surjective homomorphism, then $|Y| \in \{ 1 , |X|\}$.
\end{thm}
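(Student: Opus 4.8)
The plan is to derive simplicity from the classification obtained above rather than to work directly with the explicit map $r$ in the statement. Although this is \cite[Theorem 5.1]{CO_simple}, it can be recovered cleanly from our results. First I would record that the cycle set $(X,\cdot)$ associated to $(X,r)$ is indecomposable and irretractable of size $p^2$, hence is one of the cycle sets of \cref{thm:classification_p2_irretractable}; this is precisely the identification carried out in the remainder of the section, so I may assume $X=\Z_p\times\Z_p$ with $(a,x)\cdot(b,y)=(\alpha b+\alpha x,\alpha y+\alpha\Phi(b-a))$. The goal then becomes purely structural: to show that such an $X$ admits no surjective cycle set homomorphism onto a cycle set whose size lies strictly between $1$ and $p^2$.

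Suppose $f\colon X\twoheadrightarrow Y$ is a surjective homomorphism with $1<|Y|<p^2$. By \cref{pro:fibres_have_same_cardinality}, $Y$ is indecomposable, $|Y|$ divides $p^2$, and all fibres $f^{-1}(y)$ share the same cardinality; hence $|Y|=p$ and each fibre has exactly $p$ elements. Since $\G(X)$ acts transitively and $f(\sigma_x(z))=\sigma_{f(x)}(f(z))$, each $\sigma_x$ maps fibres onto fibres, so the fibres of $f$ constitute a nontrivial system of imprimitivity for the action of $\G(X)$ on $X$. I would then pass to the Sylow $p$-subgroup $\G_p\leq\G(X)$: by \cref{lem:deformation_improved}, together with \cref{thm:irretractable_cycle_sets_with_p_brace}, $\G_p$ is the transitive, non-abelian $p$-group governing the $p$-group case, so \cref{lem:lucchini} applies and yields that $\G_p$, and a fortiori $\G(X)$, has a unique nontrivial system of imprimitivity, namely the blocks $\{a\}\times\Z_p$. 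Consequently the fibres of $f$ must be exactly these blocks.

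It then remains to observe that this block system is \emph{not} a congruence, which is the contradiction. Projecting onto the first coordinate, the product $(a,x)\cdot(b,y)$ has first coordinate $\alpha(b+x)$, which depends on the internal coordinate $x$ of the first argument and not merely on its block $a$. Thus the map $(a,x)\mapsto a$ fails to respect $\cdot$, so the fibres $\{a\}\times\Z_p$ do not form a congruence, contradicting the fact that the fibres of a homomorphism always do. Therefore no quotient of size $p$ exists, forcing $|Y|\in\{1,p^2\}$, where $|Y|=p^2=|X|$ is the permitted value. This establishes that $(X,r)$ is simple.

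The main obstacle I anticipate is the input required before \cref{lem:lucchini} can be invoked: one must know that the Sylow $p$-subgroup of $\G(X)$ is non-abelian and acts transitively, and this is not visible from the bare formula for $r$. I would supply it either by identifying $(X,r)$ with a case-(3) solution of \cref{thm:final_classification_cycle_sets}, so that the structure of $\G_p$ is delivered by \cref{prop:Gmult_not_abelian} and \cref{lem:deformation_improved}, or, if a self-contained argument is preferred, by computing $\G(X)$ from the maps $\lambda_{(i,j)}$ directly and checking that two of the generating permutations fail to commute. Once non-abelianity and transitivity of $\G_p$ are in hand, the remaining steps are the short formal argument above.
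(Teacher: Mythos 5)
This theorem is not proved in the paper at all: it is imported verbatim from Ced\'o--Okni\'nski \cite[Theorem 5.1]{CO_simple}, and the surrounding text only establishes that the paper's case-(3) solutions embed into this family. So your proposal is necessarily a different route from ``the paper's proof''; it is, however, the natural way to recover simplicity from the classification, and its core is sound. In particular, the chain \emph{fibres of a surjective homomorphism form a congruence} $\Rightarrow$ \emph{a size-$p$ quotient would make the fibres a non-trivial system of imprimitivity for $\G(X)$} $\Rightarrow$ \emph{by \cref{lem:lucchini} applied to the transitive non-abelian Sylow $p$-subgroup $\G_p=\G(X,\cdot')^\circ$ (supplied by \cref{prop:Gmult_not_abelian} and \cref{lem:deformation_improved}), the only candidate is $\{a\}\times\Z_p$} $\Rightarrow$ \emph{this partition is not a congruence, since the first coordinate of $(a,x)\cdot(b,y)$ is $\alpha(b+x)$ and $\alpha\neq 0$} is correct and complete once you are sitting inside the normal form of \cref{thm:classification_p2_irretractable}.

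The genuine gap is the reduction to that normal form. You assert that the identification of the given $(X,r)$ with a case-(3) cycle set ``is precisely the identification carried out in the remainder of the section,'' but the paper's computation runs in the opposite direction: it shows $\Psi:(X,r^{\Phi,\alpha})\to(X,r^{f_{\Phi,\alpha},\alpha^{-1}})$ lands \emph{inside} the Ced\'o--Okni\'nski family. Inverting it, one would take $\alpha=t^{-1}$ and $\Phi(m)=-\Phi\text{-candidate}=-t f(m)$, and then the paper's requirement $\Phi(\alpha x)=\alpha\Phi(x)$ translates into $f(tx)=tf(x)$, which by \ref{S2} holds only when $(t-1)f(0)=0$ --- and \ref{S1}--\ref{S3} do not force this (e.g.\ for $p=7$, $t=2$ one can build a non-constant $f$ with $f(0)\neq 0$ satisfying all three conditions). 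So you cannot borrow the identification; you must earn the hypotheses of \cref{thm:classification_p2_irretractable} directly from the formula for $\lambda_{(i,j)}$: (i) that $(X,r)$ is an involutive non-degenerate solution at all (this is part of the theorem's assertion and your argument never touches it), (ii) that it is irretractable ($\lambda_{(i,j)}=\lambda_{(i',j')}$ forces $j=j'$ and $f(m-i)=f(m-i')$ for all $m$, hence $i=i'$ since $f$ is non-constant and $p$ is prime), and (iii) that it is indecomposable (transitivity on first coordinates plus two distinct values of $f$ producing a non-trivial translation inside a block). You partly anticipate this in your last paragraph, but as written the proof leans on a reference that does not do the work claimed, and omits (i) entirely.
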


We will denote the solution associated with cycle sets of the form \emph{(3)} 
in \cref{thm:final_classification_cycle_sets} with parameters $\Phi,\alpha$ as 
$r^{\alpha,\Phi}$ with first component 
\[
\lambda^{\Phi,\alpha}_{(a,x)}:(b,y)\mapsto(\alpha^{-1}b - x, \alpha^{-1} y - \Phi( \alpha^{-1} b - x - a)) ).
\]
Similarly, we will denote the solution constructed in \cref{COsimplesol}
with parameters $f,t$ as $r^{f,t}$ with first component
\[
\ell^{f,t}_{(i,j)}:(k,l)\mapsto(tk+j, t(l-f(tk+j-i))).
\]
With this notation and fixing $X=\Z_p\times\Z_p$, it is easy to prove that the map 
$\Psi(i,j)=(i,-j)$
is an isomorphism of solutions
$\Psi:(X,r^{\Phi,\alpha})\to(X,r^{f_{\Phi,\alpha},\alpha^{-1}})$,
where $f_{\Phi,\alpha}:i\mapsto -\Phi(\alpha i)$, since
\begin{align*}
    \lambda^{\Phi,\alpha}_{\Psi(i,j)}(\Psi(k,l))&
    =\lambda^{\Phi,\alpha}_{(i,-j)}((k,-l))
    =(\alpha^{-1}(k + \alpha j), \alpha^{-1} (-l - \Phi(k + \alpha j - \alpha i)) )\\
    &=(\alpha^{-1}k + j, \alpha^{-1} (-l + f_{\Phi,\alpha}(\alpha^{-1}k + j - i)) )\\
    &=\Psi(\alpha^{-1}k + j, \alpha^{-1} (l - f_{\Psi,\alpha}(k + \alpha j - \alpha i))\\
    &=\Psi\left(\ell^{f_{\Phi,\alpha},\alpha^{-1}}_{(i,j)}(k,l)\right).
\end{align*}
It remains to show that,
with the conditions for $\Phi$ and $\alpha$
given in \cref{thm:final_classification_cycle_sets},
the parameters $f=f_{\Phi,\alpha}$ and $t=\alpha^{-1}$
satisfy the properties required by \cref{COsimplesol}.
Since $\Phi$ satisfies \ref{S1} and \ref{S3}, so does $f_{\Phi,\alpha}$.
Moreover, since $\Phi(\alpha i)=\alpha\Phi(i)$, we have that
$f_{\Phi,\alpha}(\alpha^{-s}i)=-\Phi(\alpha \alpha^{-s}i)=-\alpha^{-s}\Phi(\alpha i)=\alpha^{-s}f_{\Phi,\alpha}(i)$.
Hence $f_{\Phi,\alpha}$ satisfies \ref{S2} if and only if
$(\alpha^{-s}-1)f_{\Phi,\alpha}(0)=0$ for all $s\in\Z$,
which is equivalent to $(\alpha-1)\Phi(0)=0$.
But the latter is a consequence of the properties of $\Phi$ and $\alpha$ as 
$\Phi(0)=\Phi(\alpha 0)=\alpha\Phi(0)$.

\subsection{Enumeration of indecomposable, irretractable cycle sets of size \texorpdfstring{$p^2$}{p2}} 

In this subsection, we will use the following convention: for a group $G$ acting on a set $X$ by an action $(g,x) \mapsto {}^gx$, we denote the set of fixed points by 
$$\Fix_X(G) = \{ x \in X \mid {}^gx = x \text{ for all } g \in G \}.$$
Recall that $\mathcal{F}_p$ has been defined as the set of all non-constant maps $\Phi: \Z_p \to \Z_p$ such that $\Phi(-A) = \Phi(A)$.

By \cref{thm:final_classification_cycle_sets}, every irretractable cycle sets can be described by a pair $(\Phi,\alpha)$ where $\alpha \in \Z_p^{\ast}$ satisfies ${}^{\alpha}\Phi = \Phi$ for all $A \in \Z_p$,
where ${}^{\alpha} \Phi:A\mapsto\alpha^{-1} \Phi(\alpha A)$.
Note that this is an action of $\Z_p^{\ast} \cong \Z_{p-1}$ on $\mathcal{F}_p$. Furthermore, $(\Phi,\alpha)$ and $(\Phi^{\prime},\alpha^{\prime})$ define isomorphic cycle sets, if and only if $\alpha = \alpha^{\prime}$ and $\Phi^{\prime} = {}^{\beta}\Phi$ for some $\beta \in \Z_p^{\ast}$. It follows directly that for $p=2$ we find 2 non-isomorphic indecomposable irretractable cycle sets of size $4$.

Assume $p \neq 2$ from now on. For a pair $(\Phi,\alpha)$ with $\Phi(0) \neq 0$, the condition $\Phi(\alpha A) = \alpha \Phi(A)$ forces $\alpha = 1$ and $\Phi$ has to be non-constant and even. As $\Phi(0) \neq 0$, there is exactly one $\beta \in \Z_p^{\ast}$ such that ${}^{\beta}\Phi(0) = 1$. Therefore, each cycle set with parameters $(\Phi,\alpha)$, $\Phi(0) \neq 0$, is isomorphic to a unique cycle set with parameters $(\Tilde{\Phi},1)$ where $\Tilde{\Phi}(0) =1$, and these parameters define mutually non-isomorphic cycle sets. We therefore find $n_p=p^{\frac{p-1}{2}}-1$ such cycle sets. 

Now assume $\Phi(0) = 0$. The condition $p \neq 2$ excludes $\alpha = -1$, as $\Phi(A) = \Phi(-A)$. More general, $-1 \not\in \left\langle \alpha \right\rangle \leq \Z_p^{\ast}$. Writing $p-1 = 2^kl$, with $2\nmid l$ we see that 
$$\alpha \in \{ x \in \Z_p^{\ast} : x^l = 1 \} =: \zeta_l \cong \Z_l.$$

As $\zeta_l$ is cyclic, there are $\varphi(d)$ elements $\alpha \in \zeta_l$ with multiplicative order $\ord(\alpha) = d$, where $\varphi$ denotes the Euler $\varphi$-function.
    
In order to count the number of orbits under the action of $\Z_p^{\ast}$, we use Burnside's lemma. Note that only the elements of $\zeta_l$ have fixed points in $\mathcal{F}_p$, so we can restrict to those. Given $\alpha \in \zeta_l$ with $\ord(\alpha) = d$, a function $\Phi \in \mathcal{F}_p$ that satisfies ${}^{\alpha}\Phi = \Phi$, is already defined by its values on coset representatives of $\Z_p^{\ast}/\left\langle -1, \alpha \right\rangle$. Under the restriction that $\Phi$ is non-constant and $\Phi(0) = 0$, there are $p^{\frac{p-1}{2d}}-1$ choices for $\Phi$.

As $\left\langle \alpha \right\rangle$ leaves each of these options invariant, we only need to count orbits with respect to the induced action of $\Z_p^{\ast}/\left\langle \alpha \right\rangle$ on $\Fix_{\mathcal{F}_p}(\langle\alpha\rangle)$. Given an element $[\beta] \in \Z_p^{\ast}/\left\langle \alpha \right\rangle$, it has fixed points in $\Fix_{\mathcal{F}_p}(\langle\alpha\rangle)$ if and only if $\beta \in \zeta_l$. Assuming the latter, and letting $c = \ord_{\Z_p^{\ast}}(\beta)$, we get $p^{\frac{p-1}{2c}}-1$ elements in $\Fix_{\mathcal{F}_p}(\langle[\beta]\rangle)$.

Using Burnside's lemma and the fact that there are $\varphi(\frac{c}{d})$ elements $[\beta] \in \zeta_l/\left\langle \alpha \right\rangle$ with $\ord_{\Z_p^{\ast}}(\beta) = c$, we get that there are
\[
    \frac{d}{p-1} \sum_{c ; d|c|l} \varphi\left(\frac{c}{d}\right) (p^{\frac{p-1}{2c}}-1)
\]
equivalence classes for parameters of the form $(\Phi,\alpha)$ where $\ord(\alpha) = d$. Considering that there are $\varphi(d)$ such $\alpha \in \zeta_l$, we get the following number of non-isomorphic cycle sets with parameters $(\Phi,\alpha)$, $\Phi(0) \neq 0$:
\[
    n^{\prime}_p = \frac{1}{p-1} \sum_{c,d ; d | c | l} d\varphi(d) \varphi\left(\frac{c}{d}\right) (p^{\frac{p-1}{2c}}-1) = \frac{1}{p-1} \sum_{c,d ; d | c | l} d\varphi(d) \varphi\left(\frac{c}{d}\right) (p^{2^{k-1}\frac{l}{c}}-1).
\]
Note that the function
\[
    \psi(n) = \sum_{d|n} d\varphi(d) \varphi \left( \frac{n}{d} \right)
\]
is a convolution of multiplicative functions. Here, \emph{multiplicative} means $\mu(mn) = \mu(m)\mu(n)$ for coprime positive integers $m,n$. So also $\psi$ is a multiplicative function which evaluates on prime powers $q^{\nu}$, $\nu \geq 1$, as 
\begin{align*}
        \psi(q^{\nu}) & = \sum_{k=0}^{\nu} q^k\varphi(q^k) \varphi(q^{\nu -k}) \\
        & = q^{\nu}(q-1) q^{\nu -1} + (q-1) q^{\nu -1} + \sum_{k=1}^{\nu -1}(q-1)^2 q^{\nu + k - 2}\\
        & = (q-1) \left(q^{2\nu -1} + q^{\nu - 1} + (q-1)q^{\nu -1} \sum_{k=1}^{\nu-1}q^{k-1} \right) \\
        & = (q-1) \left( q^{2\nu -1} + q^{\nu - 1} + q^{\nu -1}(q^{\nu -1} -1) \right) \\
        & = (q-1) \left( q^{2\nu -1} + q^{2\nu -2} \right) \\
        & = (q^2 -1) q^{2\nu -2}.
\end{align*}
For a number with prime factorization $n = \prod_i q_i^{\nu_i}$, we therefore get
\[
    \psi(n) = \prod_i (q_i^2 -1) q_i^{2\nu_i -2}.
\]
The total number of indecomposable, non-isomorphic, irretractable cycle sets of size $p^2$ can therefore be described as:
\[
    n_p+n_p'  = p^{\frac{p-1}{2}} -1 + \sum_{d|l} \psi\left(\frac{l}{d}\right)\frac{p^{2^{k-1}d}-1}{p-1}
\]
where $p-1 = 2^kl$ with $2\nmid l$.

\section*{Acknowledgements}
This work is partially supported by the project OZR3762 of Vrije Universiteit Brussel and by Fonds Wetenschappelijk Onderzoek - Vlaanderen, via the Senior Research Project G004124N.

The first author expresses his gratitude to the Alexander Humboldt Foundation which funds, by means of a Feodor Lynen fellowship, the research project that encompasses the research conducted for this article.

The second author is supported by Fonds Wetenschappelijk Onderzoek - Vlaanderen, via a PhD Fellowship fundamental research, grant 11PIO24N. 

The third author is supported by Fonds Wetenschappelijk Onderzoek - Vlaanderen, via a PhD Fellowship fundamental research,
grant 1160524N.

The authors would also like to express their gratitude to Marco Castelli for pointing out that one results in an earlier version of the manuscript was already proved by Rump.

\bibliography{references}
\bibliographystyle{abbrv}

\end{document}